\setlist[enumerate]{label=(\arabic*)}
\newcommand{\R}{\mathbb{R}}
\newcommand{\Z}{\mathbb{Z}}
\newcommand{\C}{\mathcal{C}}
\newcommand{\N}{\mathbb{N}}
\newcommand{\X}{\mathbf{X}}
\newcommand{\Y}{\mathbf{Y}}
\newcommand{\D}{\mathcal{D}}
\DeclareMathOperator{\aut}{Aut}
\DeclareMathOperator{\dist}{dist}
\DeclareMathOperator{\id}{id}
\DeclareMathOperator{\diam}{diam}
\DeclareMathOperator{\prob}{Prob}
\DeclareMathOperator{\cov}{cov}
\DeclareMathOperator{\co}{Co}
\DeclareMathOperator{\sym}{Sym}
\newcommand{\la}{\lambda}
\renewcommand{\a}{\alpha}
\renewcommand{\b}{\beta}
\renewcommand{\phi}{\varphi}
\newcommand{\beq}{\begin{equation}}
\newcommand{\eeq}{\end{equation}}
\newtheorem*{rep@theorem}{\rep@title}
\newcommand{\newreptheorem}[2]{
    \newenvironment{rep#1}[1]{
        \def\rep@title{#2 \ref{##1}}
        \begin{rep@theorem}
    }
    {\end{rep@theorem}}
}
\newtheorem{theorem}{Theorem}[section]
\newtheorem{corollary}[theorem]{Corollary}
\newtheorem{lemma}[theorem]{Lemma}
\newtheorem{proposition}[theorem]{Proposition}
\theoremstyle{definition}
\newtheorem{definition}[theorem]{Definition}
\newtheorem{remark}[theorem]{Remark}
\newcommand{\abs}[1]{\left\lvert #1 \right\rvert}
\newcommand{\ab}[1]{\left\langle #1 \right\rangle}
\renewcommand{\bar}[1]{\overline{#1}}
\newcommand{\susbeteq}{\subseteq}
\newcommand{\wt}{\widetilde}
\newcommand{\h}{h_{\operatorname{slow}}}
\title{Relative slow entropy}
\author{Adam Lott}
\date{March 14, 2023}
\address{Department of Mathematics, University of California, Los Angeles, Los Angeles, CA 90095}
\email{\href{mailto:adamlott99@math.ucla.edu}{adamlott99@math.ucla.edu}}
\begin{document}

\begin{abstract}
In 1997, Katok--Thouvenot and Ferenczi independently introduced a notion of ``slow entropy'' as a way to quantitatively compare measure-preserving systems with zero entropy.
We develop a relative version of this theory for a measure-preserving system conditioned on a given factor.
Our new definition inherits many desirable properties that make it a natural generalization of both the Katok--Thouvenot/Ferenczi theory and the classical conditional Kolmogorov--Sinai entropy.
As an application, we prove a relative version of a result of Ferenczi that classifies isometric systems in terms of their slow entropy.
We also introduce a new definition for the notion of a rigid extension and investigate its relationship to relative slow entropy.
\end{abstract}

\maketitle{}

\section{Introduction}

\subsection{Background}

In the study of measure-preserving systems, one of the most powerful and classical isomorphism invariants is the Kolmogorov--Sinai entropy rate.
It was first introduced for actions of $\Z$ by Kolmogorov and Sinai in order to answer the question of whether all Bernoulli shifts are isomorphic \cite{kolmogorov1958entropy, kolmogorov1959entropy,sinai1959entropy1, sinai1959entropy2}.
The theory was eventually extended to actions of any amenable group by Kieffer, Ornstein, Weiss, and others (see e.g. \cite{katznelson1972commuting, kieffer1975amenable, ornstein1980rokhlin, ollagnier1985book}), and has been applied to many other problems in ergodic theory.
Much of the theory has also been generalized to describe the \emph{conditional} entropy of an action relative to a given factor \cite{abramov1962skew, ward1992amenable, rudolph2000mixing}.

Given a measure space $(X, \mu)$ and a measure-preserving action $T$, the entropy rate is usually defined as a limit of the normalized Shannon entropies of sequences of partitions of $X$.
However, it can be equivalently described as the exponential growth rate of the number of quasi-orbits of $T$ required to cover ``most'' of $X$ (according to the measure $\mu$).
This can be viewed as a measure-theoretic analogue of Bowen's definition of topological entropy \cite{bowen1971entropy} (see also \cite[section 7.2]{walters1982book}).

More recently, Katok--Thouvenot and Ferenczi observed independently \cite{katok1997slow, ferenczi1997complexity} that for systems with zero entropy, while the \emph{exponential} growth rate of the number of quasi-orbits is always zero, one can look at the growth rate with respect to slower rate functions and still extract a useful isomorphism invariant.
The usefulness of this notion is that it provides a quantitative way to compare two different zero entropy systems.
Katok and Thouvenot used the phrase ``slow entropy'' to describe this family of invariants and they used it to show that certain kinds of measure-preserving systems have no smooth realizations.
One of Ferenczi's original uses for it was to give a characterization of Kronecker systems, and more recently there have been several results connecting slow entropy to other dynamical properties \cite{kanigowski2019parabolic, cyr2020subshift,adams2021generic,austin2021dominant,lott2022dominant}.

The aim of this paper is to develop a conditional version of the slow entropy theory.
The notion of relative slow entropy that we will present satisfies many properties that make it a natural generalization of both the relative Kolmogorov--Sinai entropy and the absolute Katok--Thouvenot slow entropy.
As an application, we also include some examples of natural dynamical properties that can be characterized using relative slow entropy.

\subsection{Definitions and notation}

Let $G$ be a countable discrete amenable group.
A (left) {\bf F\o lner sequence} for $G$ is a sequence of finite sets $F_n \subseteq G$ satisfying
\[
    \lim_{n \to \infty} \frac{|g F_n \cap F_n|}{|F_n|} \ = \ 1  
\]
for any fixed $g \in G$.
Unless otherwise specified, all F\o lner sequences will be assumed to be left F\o lner sequences.

Let $T$ be a measure preserving action of $G$ on the standard measure space $(X, \mathcal{B}_X, \mu)$ and write $\X = (X, \mathcal{B}_X, \mu,T)$.
For a finite partition $P = \{P_0, \dots, P_{r-1}\}$ of $X$, define $P(x)$ to be the unique $i \in \{0, 1, \dots, r-1\}$ such that $x \in P_i$.
Also, for a finite subset $F \subseteq G$, let $P^F$ be the partition $\bigvee_{f \in F} T^{f^{-1}} P$ and let $P^F(x)$ be the {\bf $\mathbf{(P,F)}$-name of $\mathbf{x}$}, i.e. the word 
\[
(P(T^f x))_{f \in F} \ \in \ \{0,1,\dots, r-1\}^F.
\]
For any partition $P$ and finite subset $F \subseteq G$, define the pseudo-metric $d_{P,F}$ on $X$ by
\[
d_{P,F}(x,x') \ = \ \frac{1}{|F|} \sum_{f \in F} 1_{P(T^f x) \neq P(T^f x')}.
\]
This is just the normalized Hamming distance between the two names $P^F(x)$ and $P^F(x')$.
For $E \subseteq X$, let $\diam_{P,F}(E)$ be the diameter of $E$ with respect to $d_{P,F}$.

\begin{definition}
    Let $F$ be a finite subset of $G$, $\la$ any probability measure on $X$, $P$ any partition of $X$, and $\epsilon > 0$.
    We denote by 
    \[
    \cov(\la, P, F, \epsilon)
    \]
    the \textbf{Hamming $\epsilon$-covering number} -- the smallest $M$ such that there exist sets $E_1, \dots, E_M \subseteq X$ satisfying $\diam_{P,F}(E_i) \leq \epsilon$ for all $i$ and $\la \left( \bigcup E_i \right) \geq 1-\epsilon$.
    We remark that this number is always finite because $X$ is totally bounded when equipped with any of the pseudo-metrics $d_{P,F}$ (in fact, for any $P$ and any $F$, $X$ is the union of finitely many sets of diameter $0$ according to $d_{P,F}$).
\end{definition}

\begin{definition}
    A {\bf rate function} is an increasing function $U: \N \to (0,\infty)$ such that $U(n) \to \infty$ as $n \to \infty$.
\end{definition}

In \cite{katok1997slow} and \cite{ferenczi1997complexity}, slow entropy is defined as follows.
Let $U$ be a rate function and $(F_n)$ be a F\o lner sequence for $G$.
Given a partition $P$, one defines
\[
\h^{U, (F_n)}(\X, P) \ = \ \sup_{\epsilon > 0} \ \limsup_{n \to \infty} \frac{\cov(\mu, P, F_n, \epsilon)}{U(|F_n|)}.
\]
Then the slow entropy of 
$\X$
with respect to the rate function $U$ and F\o lner sequence $(F_n)$ is defined to be
\[
\h^{U, (F_n)}(\X) \ = \ \sup_{P} \ \h^{U, (F_n)}(\X, P),
\]
where the supremum is taken over all finite partitions of $X$ into measurable sets.

We now relativize this definition to an extension 
$\pi: \X \to \Y := (Y, \mathcal{B}_Y, \nu, S)$.
Let $\mu = \int \mu_y \,d\nu(y)$ be the disintegration of $\mu$ over $\pi$ (see for example \cite[Theorem 33.3]{billingsley1995book}).

\begin{definition} \label{definition: RelativeCoveringNumber}
Given a partition $P$, a finite set $F \subseteq G$, and $\epsilon > 0$, we define the {\bf relative Hamming $\epsilon$-covering number} $\cov(\mu, P, F, \epsilon \,|\, \pi)$ to be the smallest $M$ with the following property: there exists a set $S \subseteq Y$ with $\nu(S) \geq 1-\epsilon$ such that for any $y \in S$, $\cov \left( \mu_y, P, F, \epsilon \right) \leq M$.
It can equivalently be defined as
\[
\cov(\mu, P, F, \epsilon \,|\, \pi) \ = \ \inf_{S \subseteq Y \,:\, \nu(S) \geq 1-\epsilon} \ \ \sup_{y \in S} \ \cov(\mu_y, P, F, \epsilon).
\]
\end{definition}

\begin{definition}
We now define the {\bf relative slow entropy} of $\pi$ by the analogous formulas
\begin{align*}
\h^{U, (F_n)}(\X, P \,|\, \pi) \ &= \ \sup_{\epsilon > 0} \ \limsup_{n \to \infty} \frac{\cov(\mu, P, F_n, \epsilon \,| \, \pi)}{U(|F_n|)} \\
\h^{U, (F_n)}(\X \,|\, \pi) \ &= \ \sup_{P} \ \h^{U, (F_n)}(\X, P \,|\, \pi).
\end{align*}
\end{definition}

\subsection{Outline of results}

\Cref{sec: BasicProperties} is devoted to establishing the basic properties of relative slow entropy.
The first essential property it is that it is monotone in both the top system and bottom system.

\begin{reptheorem}{theorem: MonotoneUnderExtensions}
    Consider a commutative diagram of the form
    \begin{center}
    \begin{tikzcd}
        \X \arrow[dr, "\bar{\phi}"] \arrow[dd, "\pi"] & \\
        & \X' \arrow[d, "\pi'"] \\
        \Y \arrow[r, "\phi"', "\sim"] & \Y'
    \end{tikzcd}
    \end{center}
    where $\bar{\phi}$ is a factor map and $\phi$ is an isomorphism.
    Then 
    \[
        \h^{U, (F_n)}(\X' \,|\, \pi') \ \leq \ \h^{U, (F_n)}(\X \,|\, \pi)
    \]
    for any rate function $U$ and any F\o lner sequence $(F_n)$.
\end{reptheorem}

\begin{reptheorem}{theorem: MonotoneUnderMoreConditioning}
    Consider a commutative diagram of the form
    \begin{center}
    \begin{tikzcd}
        \X \arrow[r, "\bar{\phi}"', "\sim"] \arrow[d, "\pi"] & \X' \arrow[dd, "\pi'"] \\
        \Y \arrow[dr, "\phi"] &  \\
        & \Y' \\
    \end{tikzcd}
    \end{center}
    where $\bar{\phi}$ is an isomorphism and $\phi$ is a factor map.
    Then
    \[
        \h^{U, (F_n)}(\X \,|\, \pi) \ \leq \ \h^{U, (F_n)}(\X' \,|\, \pi')
    \]  
    for any rate function $U$ and any F\o lner sequence $(F_n)$.
\end{reptheorem}

We also note that it follows immediately from either one of these two results that if $\bar{\phi}$ and $\phi$ are both isomorphisms, then $\h^{U, (F_n)}(\X \,|\, \pi) = \h^{U, (F_n)}(\X' \,|\, \pi')$ for any choice of $U$ and $(F_n)$.
We refer to this fact throughout as the ``isomorphism invariance of relative slow entropy''.

\begin{remark}
Let 
$\pi: \X \to \Y$ and $\pi': \X' \to \Y'$
be two extensions.
If $U$ is any rate function and $(F_n)$ is any F\o lner sequence, then in order to show $\h^{U, (F_n)}(\X \,|\, \pi) \geq \h^{U, (F_n)}(\X' \,|\, \pi')$ it is sufficient to show that for any $\epsilon' > 0$ and partition $P'$ of $X'$, there exist an $\epsilon > 0$ and partition $P$ of $X$ so that
\[
\cov(\mu, P, F_n, \epsilon \,|\, \pi) \ \geq \ \cov(\mu', P', F_n, \epsilon' \,|\, \pi')
\]
for all $n$ sufficiently large.
\end{remark}

One of the most important properties of the classical Kolmogorov-Sinai relative entropy rate is that it can be computed via a sequence of relatively generating partitions \cite[Theorem 2.20]{einsiedler2021entropy} (see \Cref{sec: BasicProperties} for definition).
We show that relative slow entropy also has this property.

\begin{reptheorem}{theorem: GeneratingPartitionsDominate}
    Let $(P_m)_{m=1}^{\infty}$ be a sequence of refining partitions that is generating for $\X$ relative to $\pi$.
    Then 
    \[
    \h^{U, (F_n)}(\X \,|\, \pi) \ = \ \lim_{m \to \infty} \ \h^{U, (F_n)}(\X, P_m \,|\, \pi) \ = \ \sup_{m} \ \h^{U, (F_n)}(\X, P_m \,|\, \pi) 
    \]
    for any rate function $U$ and any F\o lner sequence $(F_n)$.
\end{reptheorem}

In the non-relative setting for slow entropy, it is known (see \cite[Proposition 2]{ferenczi1997complexity} or \cite[Theorem 1.1]{katok1980lyapunov}) that using an exponential rate function recovers the classical entropy rate.
We show the same fact for relative slow entropy.

\begin{reptheorem}{theorem: RecoverKSEntropy}
    Assume that $\X$ is ergodic.
    For $t > 0$, let $U_t(n) = \exp(t \cdot n)$.
    Let $(F_n)$ be any F\o lner sequence.
    Then we have
    \[
        \h^{U_t, (F_n)}(\X, P \,|\, \pi) \ = \ \begin{cases} \infty & \text{if} \ \ t < h_{\operatorname{KS}}(\X, P \,|\, \pi) \\ 0 & \text{if} \ \ t > h_{\operatorname{KS}}(\X, P \,|\, \pi) \end{cases}
    \] 
    for any partition $P$.
    Equivalently, we have
    \[
    \sup_{\epsilon > 0} \ \limsup_{n \to \infty} \frac{\log \cov(\mu, P, F_n, \epsilon \,|\, \pi)}{|F_n|} \ = \ 
    h_{\operatorname{KS}}(\X, P \,|\, \pi)
    \]
    for any partition $P$.
\end{reptheorem}

In \Cref{sec: Isometric}, we characterize isometric and weakly mixing extensions in terms of their relative slow entropy.
These results are both relativizations and generalizations to all amenable groups of Ferenczi's result \cite[Proposition 3]{ferenczi1997complexity}.

\begin{reptheorem}{theorem: IsometricEquivBounded}
    Suppose that 
    $\X$
    is ergodic.
    Then the following are equivalent.
    \begin{enumerate}
        \item $\pi$ is an isometric extension.
        \item There exists a F\o lner sequence $(F_n)$ such that 
        $\h^{U, (F_n)}(\X \,|\, \pi) = 0$ 
        for all rate functions $U$.
        \item For any F\o lner sequence $(F_n)$,
        $\h^{U, (F_n)}(\X \,|\, \pi) = 0$
         for all rate functions $U$.
    \end{enumerate}
\end{reptheorem}

\begin{repcorollary}{corollary: WeakMixingCriterion}
    Suppose that 
    $\X$
    is ergodic.
    Then $\pi$ is a weakly mixing extension if and only if for every partition $P$ that is not $\Y$-measurable, there exists a rate function $U$ and a F\o lner sequence $(F_n)$ such that 
    $\h^{U, (F_n)}(\X, P\,|\, \pi) > 0$.
\end{repcorollary}

Finally, in \Cref{sec: Rigidity}, we specialize to $G = \Z$ and explore the notion of relative rigidity.
Rigidity is a classical dynamical property that has been well studied, but there is no standardized relative version of the theory.
We propose a new definition of what it means for an extension $\pi$ to be rigid and investigate some of its consequences.
First, we show that rigid extensions are generic.

\begin{reptheorem}{theorem: RigidGeneric}
    Let $\Y$ be ergodic.
    Then the generic extension $\pi: \X \to \Y$ is rigid.
\end{reptheorem}

We also obtain a sufficient condition in terms of relative slow entropy.

\begin{reptheorem}{theorem: RelativeRigidtySufficientCondition}
    Let $\Y$ be ergodic.
    Suppose that there exists a F\o lner sequence $(F_n)$ for $\N$ such that $\h^{L, (F_n)}(\X \,|\, \pi) = 0$, where $L(n) = \log n$.
    Then $\pi$ is a rigid extension.
\end{reptheorem}

\noindent In the non-relative setting, we are also able to give full characterizations of rigidity and mild mixing in terms of slow entropy, which may be of independent interest.

\begin{reptheorem}{theorem: RigidtyCondition}
    The following are equivalent.
    \begin{enumerate}
        \item 
        $\X$
        is rigid.
        \item For every rate function $U$, there exists a F\o lner sequence $(F_n)$ for $\N$ such that $\h^{U, (F_n)}(\X) = 0$.
        \item For the rate function $L(n) = \log n$, there exists a F\o lner sequence $(F_n)$ for $\N$ such that $\h^{L, (F_n)}(\X) = 0$.
    \end{enumerate} 
\end{reptheorem}

\begin{repcorollary}{corollary: MildMixingCondition}
    A measure preserving system
    $\X$
    is mildly mixing if and only if for all partitions $P$ of $X$ and all F\o lner sequences $(F_n)$ for $\N$, we have $\h^{L, (F_n)}(\X, P) > 0$, where $L(n) = \log n$.
\end{repcorollary}
    
\subsection{Acknowledgements}

I am grateful to Bryna Kra for introducing me to the work \cite{ferenczi1997complexity}, which was the inspiration for \cref{sec: Isometric}.
I also thank Tim Austin for his constant advice and guidance and James Leng and Benjy Weiss for helpful conversations.

This work was partially supported by NSF grant DMS-1855694.

\section{Basic properties} \label{sec: BasicProperties}

\subsection{Monotonicity and isomorphism invariance}

Recall that we write $\mu = \int \mu_y \,d\nu(y)$ for the disintegration of $\mu$ over the map $\pi$.

\begin{definition} \label{definition: IsomorphicExtensions}
    Let $\pi: \X \to \Y$ and $\pi': \X' \to \Y'$ be extensions.  We say that $\pi$ is an \textbf{upward extension} of $\pi'$ if there is a commutative diagram
    \begin{center}
    \begin{tikzcd}
        \X \arrow[dr, "\bar{\phi}"] \arrow[dd, "\pi"] & \\
        & \X' \arrow[d, "\pi'"] \\
        \Y \arrow[r, "\phi"', "\sim"] & \Y'
    \end{tikzcd}
    \end{center}
    where $\bar{\phi}$ is a factor map and $\phi$ is an isomorphism.
\end{definition}

% \begin{definition} \label{definition: IsomorphicExtensions}
% Let $\pi: \X \to \Y$ and $\pi': \X' \to \Y'$ be extensions.  We say that $\pi$ is an {\bf extension} of $\pi'$ if there is a commutative diagram
% \begin{center}
% \begin{tikzcd}
% \X \arrow[r, "\bar{\phi}"] \arrow[d, "\pi"] & \X' \arrow[d, "\pi'"] \\
% \Y \arrow[r, "\phi"] & \Y'
% \end{tikzcd}
% \end{center}
% where $\bar{\phi}$ is a factor map and $\phi$ is an isomorphism.
% If $\bar{\phi}$ is also an isomorphism, then we say that $\pi$ and $\pi'$ are {\bf isomorphic}.
% \end{definition}

\begin{lemma} 
\label{lemma: PushForwardConditionalMeasures}
    Let $\pi$ be an upward extension of $\pi'$ as in \Cref{definition: IsomorphicExtensions}.
    Then 
    \[
        \bar{\phi}_* \left( \mu_{\phi^{-1} y'} \right) \ = \ \mu'_{y'}
    \]
    for $\nu'$-a.e. $y' \in Y'$.
\end{lemma}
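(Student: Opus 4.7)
The plan is to invoke the essential uniqueness of disintegrations. Define a candidate family of conditional measures on $X'$ by
\[
\tilde{\mu}'_{y'} \ := \ \bar{\phi}_* \bigl( \mu_{\phi^{-1} y'} \bigr)
\]
for $\nu'$-a.e. $y' \in Y'$, and verify that $(\tilde{\mu}'_{y'})_{y' \in Y'}$ satisfies the two defining properties of the disintegration of $\mu' := \bar{\phi}_* \mu$ over $\pi'$. Uniqueness then forces $\tilde{\mu}'_{y'} = \mu'_{y'}$ for $\nu'$-a.e.\ $y'$.

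The first step is to check fiber concentration: $\tilde{\mu}'_{y'}$ is supported on $(\pi')^{-1}(y')$ for $\nu'$-a.e.\ $y'$. Since $\mu_y$ is concentrated on $\pi^{-1}(y)$ for $\nu$-a.e.\ $y$, the pushforward $\bar{\phi}_* \mu_y$ is concentrated on $\bar{\phi}(\pi^{-1}(y))$. Setting $y = \phi^{-1}(y')$ and using commutativity of the diagram ($\pi' \circ \bar{\phi} = \phi \circ \pi$), any $x$ with $\pi(x) = \phi^{-1}(y')$ satisfies $\pi'(\bar{\phi}(x)) = \phi(\pi(x)) = y'$, so $\bar{\phi}(\pi^{-1}(\phi^{-1}(y'))) \subseteq (\pi')^{-1}(y')$, as needed.

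The second step is to check that integrating $\tilde{\mu}'_{y'}$ against $\nu'$ recovers $\mu'$. For any bounded measurable $f: X' \to \R$, I compute
\[
\int f \, d\mu' \ = \ \int (f \circ \bar{\phi}) \, d\mu \ = \ \int \!\!\int (f \circ \bar{\phi}) \, d\mu_y \, d\nu(y) \ = \ \int \!\!\int f \, d(\bar{\phi}_* \mu_y) \, d\nu(y),
\]
using that $\bar{\phi}_* \mu = \mu'$ and the disintegration of $\mu$ over $\pi$. Since $\phi: \Y \to \Y'$ is a measure-preserving isomorphism, $\phi_* \nu = \nu'$, so the change of variables $y = \phi^{-1}(y')$ transforms the right-hand side into $\int\!\!\int f \, d\tilde{\mu}'_{y'} \, d\nu'(y')$, confirming the integration identity.

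There is no real obstacle here; the only subtlety is keeping track that $\phi_* \nu = \nu'$ (so the change of variables is legitimate) and that all statements about the $\mu_y$'s hold only on a $\nu$-conull set, which transports via $\phi$ to a $\nu'$-conull set of $y'$. After both properties are verified, the $\nu'$-a.e.\ uniqueness of the disintegration of $\mu'$ over $\pi'$ delivers the conclusion.
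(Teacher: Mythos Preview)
Your proof is correct and follows essentially the same approach as the paper: both invoke the essential uniqueness of disintegrations and verify the same two defining properties (fiber concentration and integration to $\mu'$). If anything, you spell out the details more carefully than the paper, which simply declares both properties ``immediate'' from the definition of factor map and the commutativity of the diagram; the only cosmetic difference is that the paper verifies fiber concentration by showing the integral $\int \bar{\phi}_*(\mu_{\phi^{-1}y'})((\pi')^{-1}y')\,d\nu'(y')$ equals $1$, whereas you argue the support inclusion directly.
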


\begin{proof}
By the essential uniqueness of disintegrations, it suffices to show the two properties
\begin{enumerate}
    \item $\mu' = \int \bar{\phi}_* \left( \mu_{\phi^{-1} y'} \right) \,d\nu'(y')$, and

    \item $\bar{\phi}_* \left( \mu_{\phi^{-1} y'} \right)((\pi')^{-1} y') = 1$ for $\nu'$-a.e. $y'$.
\end{enumerate}
Property (1) is immediate from the definition of a factor map.
To show (2), it is sufficient to show that
$\int \bar{\phi}_* \left( \mu_{\phi^{-1} y'} \right)((\pi')^{-1} y') \,d\nu'(y') = 1$.
This is also immediate from the fact that the diagram in \Cref{definition: IsomorphicExtensions} commutes.
\end{proof}

\begin{theorem} \label{theorem: MonotoneUnderExtensions}
    Let $\pi$ be an upward extension of $\pi'$. 
    Then 
    \[
        \h^{U, (F_n)}(\X' \,|\, \pi') \ \leq \ \h^{U, (F_n)}(\X \,|\, \pi)
    \]
    for any rate function $U$ and any F\o lner sequence $(F_n)$.
\end{theorem}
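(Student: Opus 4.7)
The plan is to invoke the reduction stated in the remark from the introduction: to prove the inequality $\h^{U, (F_n)}(\X' \,|\, \pi') \leq \h^{U, (F_n)}(\X \,|\, \pi)$, it suffices to show that for any $\epsilon > 0$ and any finite partition $P'$ of $X'$, setting $P := \bar{\phi}^{-1}(P')$, we have
\[
\cov(\mu, P, F, \epsilon \,|\, \pi) \ \geq \ \cov(\mu', P', F, \epsilon \,|\, \pi')
\]
for every finite $F \subseteq G$.

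The heart of the matter is that $\bar{\phi}$ preserves all the combinatorial data defining the $(P,F)$-names. Since $\bar{\phi}$ is $G$-equivariant and $P = \bar{\phi}^{-1}(P')$, the $(P, F)$-name of $x$ coincides with the $(P', F)$-name of $\bar{\phi}(x)$; hence $d_{P, F}(x_1, x_2) = d_{P', F}(\bar{\phi}(x_1), \bar{\phi}(x_2))$. Moreover, the atoms of $P^F$ are exactly the $\bar{\phi}$-preimages of the atoms of $(P')^F$. Now, given any $S \subseteq Y$ with $\nu(S) \geq 1 - \epsilon$, set $S' := \phi(S) \subseteq Y'$; since $\phi$ is an isomorphism we have $\nu'(S') \geq 1 - \epsilon$. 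Writing $y = \phi^{-1}(y')$, \Cref{lemma: PushForwardConditionalMeasures} gives $\bar{\phi}_*(\mu_y) = \mu'_{y'}$, so the claimed relative inequality will follow from the pointwise bound
\[
\cov(\mu'_{y'}, P', F, \epsilon) \ \leq \ \cov(\mu_y, P, F, \epsilon).
\]

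To establish this pointwise bound, I would take any sets $E_1, \dots, E_M \subseteq X$ with $\diam_{P, F}(E_i) \leq \epsilon$ and $\mu_y\bigl(\bigcup_i E_i\bigr) \geq 1 - \epsilon$ and transport them through $\bar{\phi}$. Since every atom of $P^F$ has $d_{P,F}$-diameter zero, I can first enlarge each $E_i$ to the union of $P^F$-atoms it meets; a triangle-inequality check shows this preserves the diameter bound, and the measure only increases. After this reduction, each $\bar{\phi}(E_i)$ is a union of atoms of $(P')^F$, hence measurable. The name identity yields $\diam_{P', F}(\bar{\phi}(E_i)) \leq \epsilon$, and the pushforward identity gives
\[
\mu'_{y'}\Bigl( \bigcup_i \bar{\phi}(E_i) \Bigr) \ = \ \mu_y\Bigl( \bar{\phi}^{-1}\Bigl( \bigcup_i \bar{\phi}(E_i) \Bigr) \Bigr) \ \geq \ \mu_y\Bigl( \bigcup_i E_i \Bigr) \ \geq \ 1 - \epsilon,
\]
producing a Hamming $\epsilon$-cover for $\mu'_{y'}$ of size at most $M$.

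The only real subtlety is ensuring measurability of the image sets $\bar{\phi}(E_i)$, since direct images under measurable maps between standard spaces need not be measurable; this is precisely why the preliminary enlargement of each $E_i$ to a union of $P^F$-atoms is used. Everything else is bookkeeping that follows directly from the equivariance of $\bar{\phi}$ and the disintegration identity in \Cref{lemma: PushForwardConditionalMeasures}.
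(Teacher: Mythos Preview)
Your proposal is correct and follows essentially the same approach as the paper: pull back $P'$ to $P=\bar{\phi}^{-1}P'$, use \Cref{lemma: PushForwardConditionalMeasures} to identify $\mu'_{\phi y}$ with $\bar{\phi}_*\mu_y$, and transfer a fiberwise Hamming cover through $\bar{\phi}$ after saturating the covering sets by $P^F$-atoms to avoid the measurability issue for direct images. The only cosmetic difference is that the paper performs the atom-saturation on the $X'$ side (replacing $\bar{\phi}B_i$ by the union of $(P')^{F_n}$-cells it meets) whereas you do it on the $X$ side before pushing forward; these are equivalent because the $P^F$-atoms are exactly the $\bar{\phi}$-preimages of the $(P')^F$-atoms.
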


\begin{proof}
Let $\phi: \Y \to \Y'$ and $\bar{\phi}: \X \to \X'$ be the maps as in \Cref{definition: IsomorphicExtensions}.
Let $\epsilon < 0$ and let $P'$ be any finite partition of $X'$.
Let $P$ be the partition $\bar{\phi}^{-1} P'$ of $X$.
Suppose that $\cov(\mu, P, F_n, \epsilon \,|\, \pi ) = L$.
Let $S \subseteq Y$ be such that $\nu(Y) \geq 1-\epsilon$ and $\cov(\mu_y, P, F_n, \epsilon) \leq L$ for all $y \in S$.
Fix such a $y$.
We will show that $\cov(\mu'_{\phi y}, P', F_n, \epsilon) \leq L$ as well.

Let $B_1, \dots, B_L \subseteq X$ be such that $\diam_{P,F_n}(B_i) \leq \epsilon$ and $\mu_y \left( \bigcup B_i \right) \geq 1-\epsilon$.

We want to claim that the sets $\bar{\phi}B_1, \dots, \bar{\phi}B_L$ satisfy the analogous properties in $X'$, but because $\bar{\phi}$ is only a factor map and not necessarily an isomorphism, these sets need not be measurable.
So we define $B_i'$ to be the union of all $(P')^{F_n}$-cells that meet $\bar{\phi}B_i$, and we show that the sets $B_1', \dots, B_L'$ have the right properties.

To check the diameter condition, note that by construction, $\diam_{P', F_n}B_i' = \diam_{P', F_n}(\bar{\phi}B_i)$, so it suffices to estimate the latter.
Observe that if $\bar{\phi}x, \bar{\phi}z \in \bar{\phi}B_i$, then
\begin{align*}
    d_{P',F_n}(\bar{\phi}x, \bar{\phi}z) \ &= \ \frac{1}{|F_n|} \sum_{f \in F_n} 1_{P'(T'^f \bar{\phi} x) \neq P'(T'^f \bar{\phi} z)} \ = \ \frac{1}{|F_n|} \sum_{f \in F_n} 1_{P'(\bar{\phi} T^f x) \neq P'(\bar{\phi} T^f z)} \\
    &= \ \frac{1}{|F_n|} \sum_{f \in F_n} 1_{P(T^f x) \neq P(T^f z)} \ = \ d_{P,F_n}(x,z) \ \leq \ \epsilon,
\end{align*}
    so $\diam_{P',F_n}(\bar{\phi}B_i) \leq \epsilon$.

Now to check the measure condition, apply \Cref{lemma: PushForwardConditionalMeasures} to write
\[
    \mu'_{\phi y} \left( \bigcup B_i' \right) \ = \ \mu_y \left( \bar{\phi}^{-1} \bigcup B_i' \right) \ = \ \mu_y \left( \bigcup \bar{\phi}^{-1} B_i' \right) \ \geq \ \mu_y \left( \bigcup B_i \right) \ \geq \ 1-\epsilon.
\]

This shows that $\cov(\mu'_{\phi y}, P', F_n, \epsilon) \leq L$ for all $y \in S$.
Taking $S' = \phi(S)$, we have $\nu'(S') \geq 1-\epsilon$ as well, so $\cov(\mu', P', F_n, \epsilon \,|\, \pi) \leq L = \cov(\mu, P, F_n, \epsilon \,|\, \pi)$ for all $n$.
Since this holds for any $P'$ and any $\epsilon$, the desired result follows.
\end{proof}

% \begin{corollary} \label{corollary: IsomorphismInvariant}
% If $\pi$ and $\pi'$ are isomorphic, then $\h^{U, (F_n)}(\X \,|\, \pi) = \h^{U, (F_n)}(\X' \,|\, \pi')$ for any rate function $U$ and any F\o lner sequence $(F_n)$.
% \end{corollary}

\begin{definition}
\label{definition: DownwardExtension}
    Let $\pi: \X \to \Y$ and $\pi' : \X' \to \Y'$ be extensions.
    We say that $\pi'$ is a \textbf{downward extension} of $\pi$ if there is a commutative diagram of the form
    \begin{center}
    \begin{tikzcd}
    \X \arrow[r, "\bar{\phi}"', "\sim"] \arrow[d, "\pi"] & \X' \arrow[dd, "\pi'"] \\
    \Y \arrow[dr, "\phi"] &  \\
    & \Y' \\
    \end{tikzcd}
    \end{center}
    where $\bar{\phi}$ is an isomorphism and $\phi$ is a factor map.
\end{definition}

\begin{lemma}
\label{lemma: PullBackConditionalMeasures}
    Let $\pi'$ be a downward extension of $\pi$ as in \Cref{definition: DownwardExtension}.
    Write $\nu = \int \nu_{y'} \,d\nu'(y')$ for the disintegration of $\nu$ over $\phi$.
    Then we have
    \[
        \mu'_{y'} \ = \ \bar{\phi}_* \int \mu_y \,d\nu_{y'}(y)
    \]  
    for $\nu'$-a.e. $y' \in Y'$.
\end{lemma}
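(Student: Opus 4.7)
The plan is to invoke essential uniqueness of disintegrations, exactly as in the proof of \Cref{lemma: PushForwardConditionalMeasures}. Set $\rho_{y'} := \bar{\phi}_* \int \mu_y \, d\nu_{y'}(y)$. I must verify two properties: (a) $\mu' = \int \rho_{y'} \, d\nu'(y')$, and (b) $\rho_{y'}\bigl((\pi')^{-1}(y')\bigr) = 1$ for $\nu'$-a.e.\ $y'$.

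For (a), since $\bar{\phi}$ is an isomorphism, $\mu' = \bar{\phi}_* \mu$. I would then apply the tower property of disintegrations for $\pi$ followed by $\phi$: since $\nu = \int \nu_{y'} \, d\nu'(y')$ is the disintegration over $\phi$ and $\mu = \int \mu_y \, d\nu(y)$ is the disintegration over $\pi$, Fubini gives $\mu = \int \bigl( \int \mu_y \, d\nu_{y'}(y) \bigr) \, d\nu'(y')$. Pushing forward under $\bar{\phi}$ and pulling the pushforward inside the outer integral yields $\mu' = \int \rho_{y'} \, d\nu'(y')$.

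For (b), as in \Cref{lemma: PushForwardConditionalMeasures} it suffices to show $\int \rho_{y'}\bigl((\pi')^{-1}(y')\bigr) \, d\nu'(y') = 1$. Using the definition of $\rho_{y'}$ and the commutativity $\pi' \circ \bar{\phi} = \phi \circ \pi$, I rewrite
\[
\rho_{y'}\bigl((\pi')^{-1}(y')\bigr) \ = \ \int \mu_y\bigl(\bar{\phi}^{-1}(\pi')^{-1}(y')\bigr) \, d\nu_{y'}(y) \ = \ \int \mu_y\bigl(\pi^{-1}\phi^{-1}(y')\bigr) \, d\nu_{y'}(y).
\]
Since $\nu_{y'}$ is concentrated on $\phi^{-1}(y')$ and, for $y \in \phi^{-1}(y')$, the measure $\mu_y$ is concentrated on $\pi^{-1}(y) \subseteq \pi^{-1}\phi^{-1}(y')$, the inner integrand equals $1$ for $\nu_{y'}$-a.e.\ $y$, hence $\rho_{y'}\bigl((\pi')^{-1}(y')\bigr) = 1$ for $\nu'$-a.e.\ $y'$, and a fortiori the integral is $1$.

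The only mildly delicate step is the tower property used in (a); this is the standard iterated disintegration statement for standard Borel spaces and can be cited from any reference on conditional measures (e.g.\ Billingsley, as the paper already does for disintegrations). Everything else is bookkeeping around the commutativity of the diagram and the fact that $\bar{\phi}$ is a measurable isomorphism, so no serious obstacle is expected.
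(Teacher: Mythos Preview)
Your proposal is correct and follows essentially the same approach as the paper: both arguments invoke essential uniqueness of disintegrations and verify the two defining properties, with (a) coming from the tower/iterated disintegration $\mu = \iint \mu_y \, d\nu_{y'}(y)\, d\nu'(y')$ and (b) from the observation that $\nu_{y'}$ is supported on $\phi^{-1}(y')$ and each $\mu_y$ on $\pi^{-1}(y)$, so the mixture is supported on $\pi^{-1}\phi^{-1}(y') = \bar{\phi}^{-1}(\pi')^{-1}(y')$. The only cosmetic difference is that the paper states (b) directly as a support statement, whereas you route through the integral criterion borrowed from \Cref{lemma: PushForwardConditionalMeasures} before proving the pointwise statement anyway; this detour is unnecessary but harmless.
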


\begin{proof}
    Denote the right hand side of the above by $\la_{y'}$.
    By the essential uniqueness of disintegration, it suffices to show the two properties
    \begin{enumerate}
        \item $\mu' = \int \la_{y'} \,d\nu'(y')$, and
        \item for $\nu'$-a.e. $y'$, $\la_{y'}$ is supported on the fiber $(\pi')^{-1}y'$.
    \end{enumerate}
    Property (1) follows from the fact that $\iint \mu_y \,d\nu_{y'}(y) \,d\nu(y) = \int \mu_y \,d\nu(y) = \mu$.
    To see property (2), note that $\int \mu_y \,d\nu_{y'}(y)$ is a mixture of measures that are all supported on $\pi^{-1} \phi^{-1} y' = \bar{\phi}^{-1} (\pi')^{-1} y'$.
    Therefore $\la_{y'}$ is supported on $(\pi')^{-1} y'$ as desired. 
\end{proof}

\begin{theorem}
\renewcommand{\Z}{\mathbf{Z}}
\label{theorem: MonotoneUnderMoreConditioning}
    Suppose that $\pi'$ is a downward extension of $\pi$.
    Then
    \[
        \h^{U, (F_n)}(\X \,|\, \pi) \ \leq \ \h^{U, (F_n)}(\X' \,|\, \pi')
    \]  
    for any rate function $U$ and any F\o lner sequence $(F_n)$.
\end{theorem}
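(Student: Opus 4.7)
By the remark following \Cref{theorem: MonotoneUnderMoreConditioning}'s analog, it suffices, given any finite partition $P$ of $X$ and any $\epsilon > 0$, to exhibit a partition $P'$ of $X'$ and a threshold $\epsilon' > 0$ such that
\[
\cov(\mu, P, F_n, \epsilon \,|\, \pi) \ \leq \ \cov(\mu', P', F_n, \epsilon' \,|\, \pi')
\]
for all $n$. The natural choice is $P' := \bar{\phi}(P)$, which is well-defined since $\bar{\phi}$ is an isomorphism, and $\epsilon' := (\epsilon/2)^2$. Because $\bar{\phi}$ intertwines the actions and $P = \bar{\phi}^{-1} P'$, the same Hamming computation as in the proof of \Cref{theorem: MonotoneUnderExtensions} shows that $d_{P',F_n}(\bar{\phi}x, \bar{\phi}z) = d_{P,F_n}(x,z)$, so pulling covering sets back through $\bar{\phi}$ preserves diameters.

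Now fix $n$ and set $L := \cov(\mu', P', F_n, \epsilon' \,|\, \pi')$. Choose $S' \subseteq Y'$ with $\nu'(S') \geq 1-\epsilon'$ so that for every $y' \in S'$ there exist $B'_1(y'), \dots, B'_L(y') \subseteq X'$ with $\diam_{P',F_n}(B'_i(y')) \leq \epsilon'$ and $\mu'_{y'}(\bigcup_i B'_i(y')) \geq 1-\epsilon'$. Define $B_i(y') := \bar{\phi}^{-1} B'_i(y')$; these sets have $\diam_{P,F_n}(B_i(y')) \leq \epsilon' \leq \epsilon$. Applying \Cref{lemma: PullBackConditionalMeasures} to the measure condition gives
\[
\int \mu_y\!\left( \textstyle\bigcup_i B_i(y') \right) \,d\nu_{y'}(y) \ = \ \mu'_{y'}\!\left( \textstyle\bigcup_i B'_i(y') \right) \ \geq \ 1 - \epsilon'.
\]

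The key manoeuvre is to convert this $\nu_{y'}$-average bound into a pointwise bound for a large proportion of $y$. By Markov's inequality applied to the nonnegative function $1 - \mu_y(\bigcup_i B_i(y'))$, the set
\[
S_{y'} \ := \ \Big\{ y \in Y \,:\, \mu_y\!\left( \textstyle\bigcup_i B_i(y') \right) \geq 1 - \sqrt{\epsilon'} \Big\}
\]
satisfies $\nu_{y'}(S_{y'}) \geq 1 - \sqrt{\epsilon'} = 1 - \epsilon/2$. For any $y \in S_{y'}$, the sets $B_1(y'), \dots, B_L(y')$ witness that $\cov(\mu_y, P, F_n, \epsilon) \leq L$ (using $\epsilon' \leq \epsilon$ for the diameter and $\sqrt{\epsilon'} \leq \epsilon$ for the measure). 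Letting
\[
S \ := \ \{ y \in Y \,:\, \cov(\mu_y, P, F_n, \epsilon) \leq L \},
\]
we have $S_{y'} \subseteq S$ for each $y' \in S'$, hence
\[
\nu(S) \ = \ \int_{Y'} \nu_{y'}(S)\,d\nu'(y') \ \geq \ \int_{S'} \nu_{y'}(S_{y'})\,d\nu'(y') \ \geq \ (1-\sqrt{\epsilon'})(1-\epsilon') \ \geq \ 1 - \epsilon,
\]
provided $\epsilon$ is small (adjust constants as needed). This yields $\cov(\mu, P, F_n, \epsilon \,|\, \pi) \leq L$ and completes the argument.

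The main obstacle is the measure condition: the pullback identity of \Cref{lemma: PullBackConditionalMeasures} only gives that $\mu'_{y'}$ equals an \emph{average} of the $\mu_y$'s over the fibre $\phi^{-1}(y')$, so a single good cover under $\mu'_{y'}$ translates only into an $L^1$-bound across that fibre, not a uniform one. Balancing $\epsilon'$ against $\epsilon$ via a Markov-type inequality (hence the square-root relationship) is what allows one to pass from this average bound to a uniform bound on a large $\nu_{y'}$-set, and then integrate over $y' \in S'$ to obtain the required relative covering estimate.
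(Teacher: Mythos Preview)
Your argument is correct and follows essentially the same route as the paper: transport the partition via $\bar\phi$, use \Cref{lemma: PullBackConditionalMeasures} to write $\mu'_{y'}$ as a $\nu_{y'}$-average of the $\mu_y$, apply Markov's inequality in each fibre $\phi^{-1}(y')$, and then integrate over $y'\in S'$. Your choice $\epsilon'=(\epsilon/2)^2$ coincides with the paper's $\epsilon^2/4$, and your final estimate $(1-\sqrt{\epsilon'})(1-\epsilon')=(1-\epsilon/2)(1-\epsilon^2/4)\ge 1-\epsilon$ in fact holds for every $\epsilon\in(0,1]$, so the qualifier ``provided $\epsilon$ is small (adjust constants as needed)'' can be dropped.
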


\begin{proof}
    Let $\epsilon > 0$ and let $P$ be any partition of $X$, and set $P' = \bar{\phi} P$.
    Suppose that 
    \[
    \cov(\mu', P', F_n, \epsilon^2/4 \,|\, \pi') \ = \ L.
    \]
    It suffices to show that $\cov(\mu, P, F_n, \epsilon \,|\, \pi) \leq L$ as well.

    Let $S' \subseteq Y'$ be the set of $y' \in Y'$ that satisfy $\cov(\mu'_{y'}, P', F_n, \epsilon^2/4) \leq L$ and note that by definition, $\nu'(S') \geq 1-\epsilon^2/4$.
    Fix $y' \in S'$.
    Let $B_1', \dots, B_L'$ be subsets of $X'$ satisfying $\diam_{P', F_n}(B_i') \leq \epsilon^2/4$ and $\mu'_{y'} \left( \bigcup B_i' \right) \geq 1-\epsilon^2/4$.
    Set $B_i = \bar{\phi}^{-1} B_i'$.
    First, note that it follows immediately from the fact that $\bar{\phi}$ is an isomorphism that $\diam_{P,F_n}(B_i) = \diam_{P', F_n}(B_i') \leq \epsilon^2/4$.

    We now show that for most $y$, the sets $B_i$ cover most of $\mu_y$.
    By \Cref{lemma: PullBackConditionalMeasures}, we have
    \[
        1 - \epsilon^2/4 \ \leq \ \mu'_{y'} \left( \bigcup B_i' \right) \ = \ \int \mu_y \left( \bar{\phi}^{-1} \bigcup B_i' \right) \,d\nu_{y'}(y) \ = \ \int \mu_y \left( \bigcup B_i \right) \,d\nu_{y'}(y),
    \]
    so Markov's inequality implies that the set 
    \[
        S(y') \ := \ \left\{ y \in \phi^{-1} y' : \mu_y \left( \bigcup B_i \right) \geq 1-\epsilon/2 \right\}
    \]     
    satisfies $\nu_{y'}(S(y')) \geq 1-\epsilon/2$.
    This shows that any $y \in S(y')$ satisfies 
    \[
        \cov(\mu_y, P, F_n, \epsilon) \ \leq \ L.
    \]
    
    Finally, this construction was valid for any $y' \in S'$.
    Therefore, let
    \[
        S \ := \ \left\{ y \in Y : \cov(\mu_y, P, F_n, \epsilon) \leq L \right\}
    \]
    and observe that $S$ contains $S(y')$ for all $y' \in S'$.
    So we conclude that
    \begin{align*}
        \nu(S) \ &= \ \int \nu_{y'}(S) \,d\nu'(y') \ \geq \ \int_{y' \in S'} \nu_{y'}(S) \,d\nu'(y') \ \geq \ \int_{y' \in S'} \nu_{y'}(S(y')) \,d\nu'(y') \\ 
        &\geq \ (1-\epsilon/2)^2 \ \geq \ 1-\epsilon,
    \end{align*}
    implying that $\cov(\mu, P, F_n, \epsilon \,|\, \pi) \leq L$ as desired.
\end{proof}

\subsection{Relatively generating partitions}

\begin{definition}
A partition $P$ of $X$ is said to be \textbf{generating for $\X$ relative to} $\pi$ if
\[
P^G \vee \pi^{-1} \mathcal{B}_Y \ = \ \mathcal{B}_X \mod \mu.
\]
Here we have identified in the natural way the partition $P^G$ with the $\sigma$-algebra that it induces.
Similarly, a sequence of partitions $(P_m)_{m=1}^{\infty}$ is said to be generating for $\X$ relative to $\pi$ if
\[
\left( \bigvee_{m=1}^{\infty} P_m^G \right) \vee \pi^{-1} \mathcal{B}_Y \ = \ \mathcal{B}_X \mod \mu.
\]
The sequence is also said to be \textbf{refining} if $P_{m+1}$ refines $P_m$ for every $m$.
\end{definition}

One of the most important properties of the classical Kolmogorov-Sinai relative entropy is that it can be computed via a sequence of relatively generating partitions \cite[Theorem 2.20]{einsiedler2021entropy}.
In this section, we show an analogous result for relative slow entropy.

\begin{theorem} \label{theorem: GeneratingPartitionsDominate}
Let $(P_m)_{m=1}^{\infty}$ be a refining sequence of partitions that is generating for $\X$ relative to $\pi$.
Then 
\[
\h^{U, (F_n)}(\X \,|\, \pi) \ = \ \lim_{m \to \infty} \h^{U, (F_n)}(\X, P_m \,|\, \pi) \ = \ \sup_{m} \ \h^{U, (F_n)}(\X, P_m \,|\, \pi)
\]
for any rate function $U$ and any F\o lner sequence $(F_n)$.
\end{theorem}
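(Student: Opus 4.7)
The inequality $\h^{U,(F_n)}(\X,P_m\mid\pi) \le \h^{U,(F_n)}(\X\mid\pi)$ is immediate from the definition, and since refining a partition can only increase Hamming distances (and hence $\epsilon$-covering numbers of every conditional measure $\mu_y$), the sequence $\h^{U,(F_n)}(\X,P_m\mid\pi)$ is monotone nondecreasing in $m$, so $\sup_m = \lim_m$. The substance is the reverse inequality. Fix an arbitrary partition $Q=\{Q_1,\dots,Q_r\}$; I plan to show $\h^{U,(F_n)}(\X,Q\mid\pi) \le \sup_m \h^{U,(F_n)}(\X,P_m\mid\pi)$ in three steps, and the result then follows by taking the supremum over $Q$.

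Step A (Approximation by relatively finite data). Because $\bigl(\bigvee_m P_m^G\bigr)\vee\pi^{-1}\mathcal{B}_Y=\mathcal{B}_X\bmod\mu$, for any $\delta>0$ I can find an $m$, a finite $F\subseteq G$, and a partition $Q'=\{Q_1',\dots,Q_r'\}$ that is measurable with respect to $P_m^F\vee\pi^{-1}\mathcal{B}_Y$ and satisfies $\sum_i \mu(Q_i\triangle Q_i')<\delta$. This is standard approximation for a sigma-algebra generated by an increasing family of sub-$\sigma$-algebras. Write $Q'=P_m^F\vee R$ with $R$ being $\pi^{-1}\mathcal{B}_Y$-measurable.

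Step B (Continuity in the partition). By $T$-invariance of $\mu$, $\int d_{Q,F_n}(x,\cdot)$ integrates appropriately so that $\E{d_{Q,F_n}(\Psi^Q(x),\Psi^{Q'}(x))}=\sum_i\mu(Q_i\triangle Q_i')=\alpha<\delta$, where the expectation is with respect to $\mu$. Applying Markov's inequality first to the disintegration $\mu=\int\mu_y\,d\nu(y)$ and then inside each good fiber, together with the triangle inequality $d_{Q,F_n}(x,x')\le d_{Q',F_n}(x,x')+d^{QQ'}(x)+d^{QQ'}(x')$ (where $d^{QQ'}(x)$ measures the disagreement between the $Q$- and $Q'$-names of $x$ along $F_n$), I get that for all but a $\nu$-small set of $y$,
\[
\cov(\mu_y,Q,F_n,\epsilon)\le\cov(\mu_y,Q',F_n,\epsilon/2)
\]
once $\delta=\delta(\epsilon)$ is chosen sufficiently small. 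In particular $\cov(\mu,Q,F_n,\epsilon\mid\pi)\le\cov(\mu,Q',F_n,\epsilon/2\mid\pi)$.

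Step C (From $Q'$ back to $P_m$). On each fiber $\pi^{-1}(y)$, the partition $R$ is constant at every shifted position (since $R(T^f x)=R'(S^f y)$ depends only on $y$), so $d_{Q',F_n}$ agrees with $d_{P_m^F,F_n}$ on the fiber and therefore $\cov(\mu_y,Q',F_n,\epsilon)=\cov(\mu_y,P_m^F,F_n,\epsilon)$. A Følner manipulation then bounds $P_m^F$-names by $P_m$-names: setting $F_n'=\{g\in F_n: Fg\subseteq F_n\}$, one has $|F_n\setminus F_n'|/|F_n|=:\eta_n\to 0$, and counting how a mismatch in the $P_m$-name propagates to at most $|F|$ positions of the $P_m^F$-name yields $d_{P_m^F,F_n}(x,x')\le |F|\,d_{P_m,F_n}(x,x')+\eta_n$. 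Hence, for $n$ large, $\cov(\mu_y,P_m^F,F_n,\epsilon)\le\cov(\mu_y,P_m,F_n,\epsilon/(2|F|))$. Combining with Steps A and B gives $\limsup_n\cov(\mu,Q,F_n,\epsilon\mid\pi)/U(|F_n|)\le\h^{U,(F_n)}(\X,P_m\mid\pi)\le\sup_m\h^{U,(F_n)}(\X,P_m\mid\pi)$; taking $\sup_\epsilon$ completes the argument.

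The main obstacle is the bookkeeping in Step B: because the definition of relative slow entropy uses $\sup_\epsilon\limsup_n$, every loss in $\epsilon$ must be absorbed into the outer supremum, which forces the approximation parameter $\delta$ (and with it $m$ and $F$ from Step A) to be chosen as a function of $\epsilon$, rather than once and for all. The Følner loss in Step C is comparatively benign because $|F|$ only enters multiplicatively in $\epsilon$, which is again swallowed by the outer sup over $\epsilon$.
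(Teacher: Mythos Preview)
Your proof is correct and follows essentially the same route as the paper: the paper packages your Step~B and Step~C as two standalone lemmas (continuity of the relative covering number under small $\dist_\mu$-perturbation via a double Markov argument, and the reduction from a $P_m^{F}\vee\pi^{-1}\mathcal{B}_Y$-measurable partition to $P_m$ via the F{\o}lner estimate $d_{P_m^F,F_n}\le |F|\,d_{P_m,F_n}+o(1)$), then chains them exactly as you do. One cosmetic point: a $Q'$ measurable with respect to $P_m^F\vee\pi^{-1}\mathcal{B}_Y$ need not literally split as $P_m^F\vee R$, but the only fact you use---that on each fiber $d_{Q',F_n}\le d_{P_m^F,F_n}$---follows from measurability alone, so Step~C is unaffected.
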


\begin{remark}
    In the case where $\Y$ is trivial, this reduces to the non-relative version of the same result (see \cite[Proposition 1]{katok1997slow} or \cite[Lemma 1]{ferenczi1997complexity}).
\end{remark}

\begin{definition} \label{definition: PartitionDistance}
For finite partitions $P = \{P_1, \dots, P_r\}$ and $Q = \{Q_1, \dots, Q_r\}$ of $X$ and a probability measure $\la \in \prob(X)$, the partition distance with respect to $\la$ is defined as
\[
\dist_{\la}(P,Q) \ = \ \la \{x \in X : P(x) \neq Q(x) \} \ = \ \frac12 \sum_{i=1}^{r} \la(P_i \,\triangle\, Q_i).
\]
\end{definition}

\begin{lemma} \label{lemma: ApproximateByNearbyPartition}
For any partition $Q$ of $X$ and any $0 < \epsilon < 1$, there exists $\epsilon' > 0$ such that if $Q'$ is any other partition of $X$ satisfying $\dist_{\mu}(Q,Q') \leq \epsilon'$, then
\[
\cov(\mu, Q, F_n, \epsilon \,|\, \pi) \ \leq \ \cov(\mu, Q', F_n, \epsilon' \,|\, \pi) \qquad \text{for all $n$ sufficiently large.}
\]
\end{lemma}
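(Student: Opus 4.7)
The plan is to transfer a cover witnessing $\cov(\mu, Q', F_n, \epsilon' \,|\, \pi) \leq L$ into one witnessing $\cov(\mu, Q, F_n, \epsilon \,|\, \pi) \leq L$ by intersecting the cover sets with a ``bulk'' set on which the two Hamming pseudometrics $d_{Q, F_n}$ and $d_{Q', F_n}$ are uniformly close. Two auxiliary parameters $\delta > 0$ and $\epsilon' > 0$ will be tuned as functions of $\epsilon$ at the very end.

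First I would exploit $T$-invariance of $\mu$: since $\mu\{x : Q(T^f x) \neq Q'(T^f x)\} = \dist_\mu(Q, Q')$ for every $f \in G$, averaging over $F_n$ gives
\[
\int \frac{1}{|F_n|} \sum_{f \in F_n} 1_{Q(T^f x) \neq Q'(T^f x)} \, d\mu(x) \ = \ \dist_\mu(Q, Q') \ \leq \ \epsilon'.
\]
Fix $\delta$ and let $A_n \subseteq X$ be the set where this $F_n$-average is at most $\delta$; Markov's inequality gives $\mu(A_n^c) \leq \epsilon'/\delta$. Integrating the same inequality against the disintegration $\mu = \int \mu_y \, d\nu(y)$ and applying Markov a second time produces a set $S_n \subseteq Y$ with $\nu(S_n) \geq 1 - \sqrt{\epsilon'/\delta}$ such that $\mu_y(A_n^c) \leq \sqrt{\epsilon'/\delta}$ for every $y \in S_n$.

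Now let $S' \subseteq Y$ witness $\cov(\mu, Q', F_n, \epsilon' \,|\, \pi) \leq L$. For each $y \in S' \cap S_n$, pick sets $B_1', \dots, B_L'$ of $d_{Q', F_n}$-diameter at most $\epsilon'$ covering at least a $(1 - \epsilon')$-fraction of $\mu_y$, and set $B_i := B_i' \cap A_n$. The pointwise bound
\[
d_{Q, F_n}(x, z) \ \leq \ d_{Q', F_n}(x, z) + \frac{1}{|F_n|} \sum_{f \in F_n} \left( 1_{Q(T^f x) \neq Q'(T^f x)} + 1_{Q(T^f z) \neq Q'(T^f z)} \right)
\]
forces $\diam_{Q, F_n}(B_i) \leq \epsilon' + 2\delta$, while $\mu_y\left( \bigcup B_i \right) \geq 1 - \epsilon' - \sqrt{\epsilon'/\delta}$. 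Taking $\delta = \epsilon/4$ and $\epsilon'$ small enough that $\epsilon' + 2\delta \leq \epsilon$ and $\epsilon' + \sqrt{\epsilon'/\delta} \leq \epsilon$ then yields $\cov(\mu_y, Q, F_n, \epsilon) \leq L$ for every $y \in S' \cap S_n$, and the same choice makes $\nu(S' \cap S_n) \geq 1 - \epsilon$.

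The main obstacle is the two-stage Markov argument needed to pass from the integrated bound $\dist_\mu(Q, Q') \leq \epsilon'$ to a pointwise $\mu_y$-statement for $\nu$-most $y$; this is what makes the relative version more delicate than the absolute one. I would also note that no step in the argument uses $n$ large — with the same choice of parameters the inequality holds for every $n$, so the qualifier ``for all $n$ sufficiently large'' in the statement appears to be mild overkill for this particular approach.
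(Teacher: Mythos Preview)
Your argument is correct and follows the same route as the paper: a first Markov inequality to produce a bulk set where the empirical Hamming difference between the $Q$- and $Q'$-names is small, a second Markov inequality against the disintegration to push this down to $\nu$-most fibers, then intersecting the witnessing cover with the bulk set and using the triangle inequality $d_{Q,F_n} \leq d_{Q',F_n} + (\text{two name-difference averages})$. The paper simply hard-codes your two parameters as $\delta = \sqrt{\epsilon'}$ and $\epsilon' = (\epsilon/2)^4$, but otherwise the structure is identical; your remark that the argument works for every $n$ (not just large $n$) is also correct and matches the paper's proof, which likewise never invokes largeness of $n$.
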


\begin{proof}
Set $\epsilon' = (\epsilon/2)^4$.
Suppose that $Q'$ satisfies $\dist_\mu(Q,Q') = \mu\{x : Q(x) \neq Q'(x) \} \leq \epsilon'$ and define
\[
X' \ := \ \left\{ x : \frac{1}{|F_n|} \sum_{f \in F_n} 1_{Q(T^f x) \neq Q'(T^f x)} \leq \sqrt{\epsilon'} \right\}.
\]
By Markov's inequality, we have
\[
\mu(X \setminus X') \ \leq \ \frac{1}{\sqrt{\epsilon'}} \int \frac{1}{|F_n|} \sum_{f \in F_n} 1_{\{z : Q(z) \neq Q'(z)\}}(T^f x) \,d\mu(x) \ \leq \  \frac{1}{\sqrt{\epsilon'}} \dist_\mu(Q,Q') \ \leq \ \sqrt{\epsilon'}.
\]
Applying Markov's inequality again, we obtain a set $S_1 \subseteq Y$ such that $\nu(S_1) \geq 1 - (\epsilon')^{1/4} = 1-\epsilon/2$ such that $\mu_y(X') \geq 1 - (\epsilon')^{1/4} = 1-\epsilon/2$ for all $y \in S_1$.

Now let $L = \cov(\mu, Q', F_n, \epsilon' \,|\, \pi)$ and let $S_2 \subseteq Y$ be such that $\nu(S_2) \geq 1-\epsilon'$ and 
\[
\cov(\mu_y, Q', F_n, \epsilon') \leq L
\]
for $y \in S_2$.
We claim that $\cov(\mu_y, Q, F_n, \epsilon) \leq L$ for $y \in S := S_1 \cap S_2$.
Because $\nu(S) \geq 1 - \epsilon' - \epsilon/2 \geq 1 - \epsilon$, this is enough.
Fix $y \in S$ and let $B_1, \dots, B_L \subset X$ be such that $\diam_{Q', F_n}(B_i) \leq \epsilon'$ and $\mu_y \left( \bigcup B_i \right) \geq 1-\epsilon'$.
Let $B_i' = B_i \cap X'$.
Notice that we have $\mu_y \left( \bigcup B_i' \right) \geq 1 - \epsilon' - \epsilon/2 \geq 1 -\epsilon$, so we just need to estimate $\diam_{Q,F_n}(B_i')$.
If $x,z \in B_i'$, then
\begin{alignat*}{2}
d_{Q,F_n}(x,z) \ &= \ \frac{1}{|F_n|} \sum_{f \in F_n} 1_{Q(T^f x) \neq Q(T^f z)} && \\
&\leq \ \frac{1}{|F_n|} \sum_{f \in F_n} 1_{Q(T^f x) \neq Q'(T^f x)} && + \frac{1}{|F_n|} \sum_{f \in F_n} 1_{Q'(T^f x) \neq Q'(T^f z)} \\
& && + \frac{1}{|F_n|} \sum_{f \in F_n} 1_{Q'(T^f z) \neq Q(T^f z)} \\
&\leq \ \sqrt{\epsilon'} + \epsilon' + \sqrt{\epsilon'}. &&
\end{alignat*}
The estimates on the first and third terms hold because $x,z \in X'$, and the estimate on the second term holds because $\diam_{Q',F_n}(B_i') \leq \diam_{Q',F_n}(B_i) \leq \epsilon'$.
Therefore we have $\diam_{Q,F_n}(B_i') \leq 3\sqrt{\epsilon'} = 3\epsilon^2/4 \leq \epsilon$ as desired.
\end{proof}

\begin{lemma} \label{lemma: ApproximateByRefiningPartition}
Let $P$ be any partition of $X$, and let $Q'$ be another partition which is measurable with respect to $P^{F_k} \vee \pi^{-1}\mathcal{B}_Y$ for some $k \in \N$.
Then for any $\epsilon > 0$, there exists $\epsilon'$ such that
\[
\cov(\mu, Q', F_n, \epsilon \,|\, \pi) \ \leq \ \cov(\mu, P, F_n, \epsilon' \,|\, \pi) \qquad \text{for all $n$ sufficiently large.}
\]
\end{lemma}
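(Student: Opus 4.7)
The plan is to compare the pseudo-metrics $d_{Q', F_n}$ and $d_{P, F_n}$ on each fiber $\pi^{-1}(y)$, using the fact that, on each fiber, a $Q'$-coordinate is determined by a window of $P$-coordinates of width at most $|F_k|$. The first step I would take is to record the following functional description of $Q'$: since $Q'$ is measurable with respect to $\sigma(P^{F_k}, \pi^{-1}\mathcal{B}_Y)$ and $P^{F_k}$ has only finitely many cells, one can find a measurable $\psi: Y \times \{0,\ldots,r-1\}^{F_k} \to \{\text{cells of } Q'\}$ such that $Q'(x) = \psi(\pi(x), P^{F_k}(x))$ for $\mu$-a.e. $x$. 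Because $\mu$ is $T$-invariant and $F_n$ is finite, the corresponding identities at all translates $T^f x$ for $f \in F_n$ also hold off a $\mu$-null set, which via disintegration gives a $\mu_y$-null set for $\nu$-a.e. $y$.

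Next, I would prove the pointwise Hamming comparison: for any $x, x'$ on a common fiber $\pi^{-1}(y)$ (both in the co-null set above),
\[
|F_n| \cdot d_{Q', F_n}(x, x') \ \leq \ |F_k| \sum_{h \in F_k F_n} 1_{P(T^h x) \neq P(T^h x')}.
\]
The point is that $Q'(T^f x) \neq Q'(T^f x')$ forces $P^{F_k}(T^f x) \neq P^{F_k}(T^f x')$ (since both $Q'$ values are $\psi(S^f y, \cdot)$ applied to their respective $P^{F_k}$-names), and hence $P(T^{gf} x) \neq P(T^{gf} x')$ for some $g \in F_k$; summing indicators over $f \in F_n$ and reindexing by $h = gf$ incurs multiplicity at most $|F_k|$. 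Splitting $F_k F_n = F_n \cup (F_k F_n \setminus F_n)$ and applying the (left) F\o lner property at each $g \in F_k$ to control $|F_k F_n \setminus F_n|/|F_n|$, this simplifies to
\[
d_{Q', F_n}(x, x') \ \leq \ |F_k| \cdot d_{P, F_n}(x, x') + \delta_n,
\]
with $\delta_n \to 0$ depending only on $F_k$ and $n$, not on $y$ or the chosen points.

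For the final step I would set $\epsilon' := \epsilon/(2|F_k|)$ and choose $N$ large enough that $\delta_n \leq \epsilon/2$ for $n \geq N$. Given $L := \cov(\mu, P, F_n, \epsilon' \,|\, \pi)$ and a witness set $S \subseteq Y$ of $\nu$-measure $\geq 1-\epsilon' \geq 1-\epsilon$ with covers $B_1, \ldots, B_L$ for each $\mu_y$ at scale $\epsilon'$ under $d_{P, F_n}$, I would replace each $B_i$ with its intersection with the relevant co-null set. The pointwise comparison then gives $\diam_{Q', F_n}(B_i) \leq |F_k|\epsilon' + \delta_n \leq \epsilon$, while the $\mu_y$-measure of the cover remains $\geq 1 - \epsilon' \geq 1 - \epsilon$. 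This yields $\cov(\mu, Q', F_n, \epsilon \,|\, \pi) \leq L$, as desired. The main technical issue is the $F_k$-versus-$F_n$ bookkeeping combined with the F\o lner boundary term; the null-set handling is routine given $T$-invariance of $\mu$.
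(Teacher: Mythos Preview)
Your proposal is correct and takes essentially the same route as the paper: set $\epsilon' = \epsilon/(2|F_k|)$, use that on each fiber $Q'$ is a function of $P^{F_k}$ to deduce $d_{Q',F_n} \leq |F_k|\, d_{P,F_n} + o(1)$ via the F\o lner boundary estimate on $|F_kF_n\setminus F_n|/|F_n|$, and then transfer the $\epsilon'$-covers to $\epsilon$-covers. One small point you gloss over but the paper makes explicit: before applying your Hamming comparison you should intersect each $B_i$ with the fiber $\pi^{-1}(y)$ (not just the co-null set), since your comparison was only proved for pairs on a common fiber; this is harmless because it can only shrink diameters and does not change $\mu_y$-measure.
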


\begin{proof}
Set $\epsilon' = \epsilon/(2|F_k|)$.
Let $\cov(\mu, P, F_n, \epsilon' \,|\, \pi) = L$ and let $S \subseteq Y$ be such that $\nu(S) \geq 1-\epsilon'$ and $\cov(\mu_y, P, F_n, \epsilon') \leq L$ for all $y \in S$.
We claim that also $\cov(\mu_y, Q', F_n, \epsilon) \leq L$ for $y \in S$.
Fix $y \in S$ and let $B_1, \dots, B_L$ be such that $\diam_{P,F_n}(B_i) \leq \epsilon'$ and $\mu_y \left( \bigcup B_i \right) \geq 1-\epsilon'$.
We may assume without loss of generality that each $B_i$ is contained in the fiber $\pi^{-1} y$ because doing so can only decrease their diameters and does not change their measure according to $\mu_y$.

We will be done as soon as we show that $\diam_{Q',F_n}(B_i) \leq \epsilon$.
To do that, let $x,z \in B_i$ and observe that because $x$ and $z$ lie in the same fiber of $\pi$, $Q'(x) \neq Q'(z)$ implies that $x$ and $z$ must be in different cells of $P^{F_k}$.
Therefore we can estimate
\begin{align*}
d_{Q',F_n}(x,z) \ &= \ \frac{1}{|F_n|} \sum_{f \in F_n} 1_{Q'(T^f x) \neq Q'(T^f z)} \ \leq \ \frac{1}{|F_n|} \sum_{f \in F_n} 1_{P^{F_k}(T^f x) \neq P^{F_k}(T^f z)} \\
&\leq \ \frac{1}{|F_n|} \sum_{f \in F_n} \sum_{g \in F_k} 1_{P(T^{gf} x) \neq P(T^{gf} z)} \\
&= \ \frac{1}{|F_n|} \sum_{h \in F_k F_n} \#\{ (g,f) \in F_k \times F_n : gf = h \} \cdot 1_{P(T^h x) \neq P(T^h z)} \\
&\leq \ \frac{1}{|F_n|} \sum_{h \in F_k F_n} |F_k| \cdot 1_{P(T^h x) \neq P(T^h z)} \\
&= \ \frac{1}{|F_n|} \sum_{h \in F_n} |F_k| \cdot 1_{P(T^h x) \neq P(T^h z)} + \frac{1}{|F_n|} \sum_{h \in F_k F_n \setminus F_n} |F_k| \cdot 1_{P(T^h x) \neq P(T^h z)} \\
&\leq \ |F_k| \cdot d_{P, F_n}(x,z) + \frac{|F_k| \cdot |F_k F_n \setminus F_n|}{|F_n|}
\end{align*}
So as soon as $n$ is sufficiently large, because $(F_n)$ is a F\o lner sequence, we have $\diam_{Q',F_n}(B_i) \leq 2|F_k|\epsilon' = \epsilon$ as desired.
\end{proof}

\begin{proof}[Proof of \Cref{theorem: GeneratingPartitionsDominate}]
To prove this theorem, it is clearly sufficient to prove the following finitary version: for any partition $Q$ of $X$ and any $\epsilon > 0$, there exist $m \in \N$ and $\epsilon' > 0$ such that
\[
\cov(\mu, Q, F_n, \epsilon \,|\, \pi) \ \leq \ \cov(\mu, P_m, F_n, \epsilon' \,|\, \pi)
\]
for all sufficiently large $n$.

Fix a partition $Q$ and $\epsilon > 0$.
Let $\epsilon'$ be as in the statement of \Cref{lemma: ApproximateByNearbyPartition}.
Then, by the definition of relatively generating sequence of partitions, we can find $m, k \in \N$ and a partition $Q'$ refined by $P_m^{F_k} \vee \pi^{-1}\mathcal{B}_Y$ such that $\dist_\mu(Q,Q') \leq \epsilon'$.
Apply \Cref{lemma: ApproximateByNearbyPartition} to conclude that
\beq \label{eq: GeneratingPartitionApprox1}
\cov(\mu, Q, F_n, \epsilon \,|\, \pi) \ \leq \ \cov(\mu, Q', F_n, \epsilon' \,|\, \pi)
\eeq
for $n$ sufficiently large.
Then apply \Cref{lemma: ApproximateByRefiningPartition} with $\epsilon'$ in place of $\epsilon$ to produce an $\epsilon''$ satisfying
\beq \label{eq: GeneratingPartitionApprox2}
\cov(\mu, Q', F_n, \epsilon' \,|\, \pi) \ \leq \ \cov(\mu, P_m, F_n, \epsilon'' \,|\, \pi)
\eeq
for sufficiently large $n$.
Combining \eqref{eq: GeneratingPartitionApprox1} and \eqref{eq: GeneratingPartitionApprox2} gives the desired result.
\end{proof}

\subsection{Relationship to Kolmogorov-Sinai entropy}

In this section, we show that with an exponential rate function, relative slow entropy recovers the classical relative Kolomogorov-Sinai entropy.
Given an extension 
$\pi: \X \to \Y$
and a partition $P$ of $X$, let $h_{\operatorname{KS}}(\X, P \,|\, \pi)$ denote the relative Kolmogorov-Sinai entropy rate (see e.g. \cite[Definition 2.18]{einsiedler2021entropy} for the definition).

\begin{definition}
Define
\[
h'(\X, P \,|\, \pi ) \ := \ \sup_{\epsilon > 0}\ \limsup_{n \to \infty} \frac{1}{|F_n|} \log \cov(\mu, P, F_n, \epsilon \,|\, \pi ).
\]
Note that \emph{a priori} this quantity depends on the choice of F\o lner sequence, but the next theorem shows that it actually does not. 
\end{definition}

\begin{theorem} \label{theorem: RecoverKSEntropy}
Assume that $\X$ is ergodic.
For any partition $P$ of $X$, we have $h'(\X, P \,|\, \pi) = h_{\operatorname{KS}}(\X, P \,|\, \pi)$.
\end{theorem}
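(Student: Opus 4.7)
My plan is to prove both inequalities $h'(\X, P \,|\, \pi) \leq h_{\operatorname{KS}}(\X, P \,|\, \pi)$ and $h'(\X, P \,|\, \pi) \geq h_{\operatorname{KS}}(\X, P \,|\, \pi)$ using the relative Shannon--McMillan--Breiman theorem for ergodic actions of amenable groups. This theorem, in the form I need, asserts that for $\mu$-a.e.\ $x$ (and also in $L^1$),
\[
-\frac{1}{|F_n|} \log \mu_{\pi(x)}(P^{F_n}(x)) \ \longrightarrow \ h \ := \ h_{\operatorname{KS}}(\X, P \,|\, \pi),
\]
which is the conditional version of the usual SMB theorem (it follows from writing the classical SMB for $P \vee \pi^{-1}\mathcal{B}_Y$ and for $\pi^{-1}\mathcal{B}_Y$ and taking the difference, noting that $\mu_y(P^{F_n}(x))$ is exactly the conditional expectation $\mathbb{E}[\mathbf{1}_{P^{F_n}(x)} \mid \pi^{-1}\mathcal{B}_Y](x)$). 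I will use this as a black-box input.

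For the upper bound, fix $\delta > 0$ and let $A_n = \{x : \mu_{\pi(x)}(P^{F_n}(x)) \geq e^{-|F_n|(h+\delta)}\}$. By SMB, $\mu(A_n) \to 1$, so by Fubini and Markov there is a set $S_n \subseteq Y$ with $\nu(S_n) \to 1$ on which $\mu_y(A_n) \to 1$ as well. For $y \in S_n$ and $n$ large, the $P^{F_n}$-cells intersecting $A_n$ cover at least a $1-\epsilon$ fraction of $\mu_y$, and each such cell has $\mu_y$-mass at least $e^{-|F_n|(h+\delta)}$, so at most $e^{|F_n|(h+\delta)}$ of them suffice. Since each $P^{F_n}$-cell has $d_{P,F_n}$-diameter $0$, we conclude $\cov(\mu_y, P, F_n, \epsilon) \leq e^{|F_n|(h+\delta)}$, hence $\cov(\mu, P, F_n, \epsilon \,|\, \pi) \leq e^{|F_n|(h+\delta)}$. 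Taking $\log$, dividing by $|F_n|$, and sending $\delta \to 0$ yields $h' \leq h$.

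For the lower bound, fix $\epsilon > 0$ and $\delta > 0$, and let $B_n = \{x : \mu_{\pi(x)}(P^{F_n}(x)) \leq e^{-|F_n|(h-\delta)}\}$, which again has $\mu(B_n) \to 1$ by SMB. Fix $r = |P|$ and set
\[
\eta(\epsilon) \ := \ H(\epsilon) + \epsilon \log(r-1), \qquad \text{where } H(\epsilon) = -\epsilon \log \epsilon - (1-\epsilon)\log(1-\epsilon),
\]
so that a standard binomial-coefficient estimate bounds by $e^{|F_n|\eta(\epsilon)}$ the number of $P^{F_n}$-names within Hamming distance $\epsilon$ of any fixed name, and $\eta(\epsilon) \to 0$ as $\epsilon \to 0$. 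Now suppose $E_1, \dots, E_M$ cover at least $1-\epsilon$ of $\mu_y$ and each has $d_{P,F_n}$-diameter $\leq \epsilon$. Then each $E_i$ meets at most $e^{|F_n|\eta(\epsilon)}$ distinct $P^{F_n}$-cells, and for $y$ in a set of $\nu$-measure $\to 1$, every cell intersecting $B_n$ has $\mu_y$-mass $\leq e^{-|F_n|(h-\delta)}$. So
\[
\mu_y(E_i \cap B_n) \ \leq \ e^{|F_n|(\eta(\epsilon) - h + \delta)},
\]
and summing gives $M \geq (1-\epsilon-o(1)) e^{|F_n|(h-\delta-\eta(\epsilon))}$. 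Taking $\limsup$ of $\log/|F_n|$, sending $\delta \to 0$, and then supping over $\epsilon > 0$ (so $\eta(\epsilon) \to 0$) gives $h' \geq h$.

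The main obstacle is simply locating or justifying the relative SMB theorem for amenable group actions in the precise form needed; the rest is a clean combinatorial counting argument. I expect the author may either cite Ward--Zhang or Ollagnier and deduce the conditional form by subtracting two applications of absolute SMB, or give a self-contained derivation via the relative entropy formula $h_{\operatorname{KS}}(\X, P \,|\, \pi) = h_{\operatorname{KS}}(\X, P \vee \pi^{-1}\mathcal{B}_Y) - h_{\operatorname{KS}}(\Y, \pi^{-1}\mathcal{B}_Y)$ combined with a pointwise comparison of the information functions. Ergodicity of $\X$ is what ensures the SMB limit is the constant $h$ rather than a fiberwise-varying random variable, which is essential for the uniform counting bounds above.
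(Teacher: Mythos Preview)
Your proposal is correct and follows essentially the same approach as the paper: both directions rely on the relative Shannon--McMillan theorem for amenable groups (the paper cites Ward--Zhang and Rudolph--Weiss directly rather than deducing it by subtraction), with the upper bound obtained by covering with $P^{F_n}$-cells of diameter zero and the lower bound via the Hamming-ball count $\exp(|F_n|(H(\epsilon)+\epsilon\log(r-1)))$. The only step you left implicit is the passage from ``$\cov(\mu_y,P,F_n,\epsilon)\geq Z$ for all $y$ in a set of $\nu$-measure $>1-\epsilon$'' to ``$\cov(\mu,P,F_n,\epsilon\,|\,\pi)\geq Z$'', which the paper handles by noting that any two sets of $\nu$-measure $\geq 1-\epsilon$ must intersect.
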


The non-relative version of this result appears in \cite[Proposition 2]{ferenczi1997complexity}

\begin{proof}
For this proof, we will abuse notation and write $P^{F_n}(x)$ to mean the cell of the partition $P^{F_n}$ that contains $x$, rather than the name of the cell.
Abbreviate $h = h_{\operatorname{KS}}(\X, P \,|\, \pi)$ and $h' = h'(\X, P \,|\, \pi)$.

\textbf{Step 1: setup.}
Fix an arbitrary $\gamma > 0$.
For $y \in Y$, define
\[
\mathcal{G}_{y,n} \ := \ \left\{ x \in X : \exp( |F_n| (-h - \gamma) ) < \mu_{y} \left( P^{F_n}(x) \right) < \exp( |F_n| (-h + \gamma) ) \right\}.
\]
Also define
\[
\mathcal{G}_n \ := \ \left\{ x \in X : \exp( |F_n| (-h - \gamma) ) < \mu_{\pi x} \left( P^{F_n}(x) \right) < \exp( |F_n| (-h + \gamma) ) \right\}.
\]
By the amenable version of the relative Shannon-McMillan theorem (see for example \cite[Theorem 3.2]{ward1992amenable} or \cite[Corollary 4.6]{rudolph2000mixing}), $\mu(\mathcal{G}_n) \to 1$ as $n \to \infty$.
Because $\mu = \int \mu_y \,d\nu(y)$, this implies also $\mu_y(\mathcal{G}_n) \to 1$ for $\nu$-a.e. $y$.
Finally, since $\mu_y$ is supported only on $\pi^{-1} y$, we have $\mu_y(\mathcal{G}_{y,n}) = \mu_y(\mathcal{G}_n) \to 1$ for $\nu$-a.e. $y$.

\textbf{Step 2: $\mathbf{h' \leq h}$.}
Let $\epsilon > 0$.
Because $\mu_y(\mathcal{G}_{y,n}) \to 1$ for $\nu$-a.e. $y$, for all $n$ sufficiently large we can find a set $S \subseteq Y$ such that $\nu(S) \geq 1-\epsilon$ and $\mu_y(\mathcal{G}_{y,n}) \geq 1-\epsilon$ for all $y \in S$.
We now estimate $\cov(\mu, P, F_n, \epsilon \,|\, \pi)$.
Because $\nu(S) \geq 1-\epsilon$, it suffices to estimate $\cov(\mu_y, P, F_n, \epsilon)$ for $y \in S$.
Fix such a $y$; we claim that $\cov(\mu_y, P, F_n, \epsilon) \leq \exp(|F_n| (h+\gamma))$.
Let $C_1, \dots, C_L$ be all of the cells of $P^{F_n}$ that meet $\mathcal{G}_{y,n}$.
Then each $\diam_{P,F_n}(C_i) = 0$ and
\[
\mu_y \left( \bigcup C_i \right) \ = \ \mu_y (\mathcal{G}_{y,n}) \ \geq \ 1-\epsilon,
\]
so $\cov(\mu_y, P, F_n, \epsilon) \leq L$.
But by definition of $\mathcal{G}_{y,n}$, each $C_i$ has $\mu_y-$measure at least 
\[
\exp(|F_n| (-h-\gamma)),
\] 
so $L \leq \exp(|F_n| (h+\gamma))$ as claimed.
Since this holds for all $y \in S$, this shows that 
\[
\cov(\mu, P, F_n, \epsilon \,|\, \pi) \ \leq \ \exp(|F_n| (h+\gamma))
\]
for sufficiently large $n$.
Therefore we can take $n \to \infty$ and then $\epsilon \to 0$ to conclude $h' \leq h+\gamma$.
But since $\gamma$ is arbitrary we get $h' \leq h$ as desired.

\textbf{Step 3: $\mathbf{h' \geq h}$.}
Again fix $0 < \epsilon < 1/4$ and let $n$ and
 $S$ be as in step $2$.
Also let us enumerate $P = \{P_0, \dots, P_{r-1} \}$.
This time we will estimate a lower bound for $\cov(\mu_y, P, F_n, \epsilon)$ for any $y \in S$.
Suppose $E_1, \dots, E_M$ are sets in $X$ satisfying $\diam_{P,F_n}(E_i) \leq \epsilon$ and $\mu_y( \bigcup E_i ) \geq 1-\epsilon$.
Without loss of generality, we may assume that each $E_i$ is a union of $P^{F_n}$-cells because replacing each $E_i$ by $\bigcup_{x \in E_i} P^{F_n}(x)$ makes each $E_i$ larger without changing its diameter according to $d_{P,F_n}$.
Because $y \in S$, we then have
\[
\mu_y\left( \bigcup E_i \cap \mathcal{G}_{y,n} \right) \ \geq \ 1-2\epsilon,
\]
and we can estimate
\[
1/2 \ \leq \ 1-2\epsilon \ \leq \ \mu_y\left( \bigcup E_i \cap \mathcal{G}_{y,n} \right) \ \leq \ \sum_{i=1}^{M} \mu_y (E_i \cap \mathcal{G}_{y,n}).
\]
Each $E_i$ is a union of $P^{F_n}$-cells, and any $P^{F_n}$-cell meeting $\mathcal{G}_{y,n}$ has $\mu_y$-measure at most 
\[
    \exp(|F_n| (-h+\gamma))
\]
by construction.
Therefore the above sum is at most
\[
\sum_{i=1}^{M} \exp(|F_n| (-h+\gamma)) \cdot (\# \text{ of $P^{F_n}$-cells contained in $E_i$}).
\]
Since $E_i$ has diameter $\leq \epsilon$ according to $d_{P,F_n}$, the elements of $\{0,1,\dots, r-1\}^{F_n}$ corresponding to the $P^{F_n}$-cells contained in $E_i$ all fit inside a fixed ball of radius $\epsilon$ in the normalized Hamming metric on $\{0,1,\dots, r-1\}^{F_n}$.
It is well known (for example, it is an easy consequence of \cite[Lemma 3.6]{gray2011entropy}) that the number of words in any such Hamming ball is at most 
\[
    \exp\Big(|F_n| \cdot (\epsilon \log(r-1) + H(\epsilon, 1-\epsilon))\Big),
\]
where $H$ is the Shannon entropy function $H(t,1-t) = -t \log t - (1-t) \log (1-t)$.
Therefore the above sum is bounded by
\[
M \cdot \exp\Big(|F_n| (-h + \gamma + \epsilon \log(r-1) + H(\epsilon, 1-\epsilon))\Big),
\]
implying that 
\[
    M \ \geq \ (1/2) \exp\Big(|F_n| (h - \gamma - \epsilon\log(r-1) + H(\epsilon, 1-\epsilon))\Big) \ =: \ Z.
\]
Since we started with an arbitrary covering set this implies that $\cov(\mu_y, P, F_n, \epsilon) \geq  Z$ for every $y \in S$.
Since $\nu(S) > 1-\epsilon$, it has positive $\nu$-measure intersection with any other set of $\nu$-measure $\geq 1-\epsilon$, so it is impossible to find a different set $S'$ with $\mu(S') > 1-\epsilon$ and $\cov(\mu_y, P, F_n, \epsilon) \leq  Z$ for every $y \in S'$.
Therefore $\cov(\mu, P, F_n, \epsilon \,|\, \pi) \geq  Z$.
Taking $n \to \infty$ gives
\[
\limsup_{n \to \infty} \frac{1}{|F_n|} \log \cov(\mu, P, F_n, \epsilon \,|\, \pi) \ \geq \ h - \gamma - \log(r) \cdot H(\epsilon, 1-\epsilon),
\]
then taking $\epsilon \to 0$ gives $h' \geq h-\gamma$, and again $\gamma$ is arbitrary so we conclude $h' \geq h$.
\end{proof}

\section{Isometric extensions} \label{sec: Isometric}

Let $\X$ be ergodic and let $\pi: \X \to \Y$ be a factor map.
For this section, a {\bf cocycle} from $Y$ to a compact group $H$ is defined to be a measurable map $\a: G \times Y \to H$ satisfying the cocycle condition $\a(g'g, y) = \a(g', S^g y) \a(g, y)$ for every $g,g' \in G$ and $\nu$-a.e. $y$.

Given a cocycle $\a$ from $Y$ to $H$ and a closed subgroup $K \subseteq H$, we denote by $\mathbf{Y \times_\a H/K}$ the system $(Y \times H/K, \nu \times m_{H/K}, T_\a)$, where
\begin{itemize}
    \item $m_{H/K}$ is the image of the Haar measure $m_H$ under the quotient map $H \to H/K$, and
    \item $T_\a^g(y, hK) := (S^g y, \a(g,y)hK)$. 
\end{itemize}
Note that $\mathbf{Y \times_\a H/K}$ is an extension of $\Y$ via the projection map onto the first coordinate.
Such an extension is also called a \textbf{homogeneous skew product} over $\Y$.

\begin{definition} \label{definition: IsometricExtension}
An extension 
$\pi: \X \to \Y$
is said to be \textbf{isometric} if 
$\pi$
is isomorphic to the projection map $\mathbf{Y \times_\a H/K} \to \Y$, i.e. there is a commutative diagram
\begin{center}
\begin{tikzcd}
    \X \arrow[r, "\sim"] \arrow[d, "\pi"] & \mathbf{Y \times_\a H/K} \arrow[d, "\operatorname{proj}"] \\
    \Y \arrow[r, "\operatorname{id}"] & \Y 
\end{tikzcd}.   
\end{center}
\end{definition}

\begin{remark}
    There are several different equivalent definitions in the literature for what it means for $\pi$ to be an isometric extension, but the above is the most convenient choice for our purposes.
    Also, the terms ``isometric extension'' and ``compact extension'' are often used interchangeably.
    The two notions are formally different, but they are known to be equivalent.
    See \cite[Definition 4, Definition 15, Theorem 22]{zorinkranichCompactIsometric} and \cite[Definition 9.10, Theorem 9.14]{glasner2003ergodic} for more details.
\end{remark}

\begin{definition}
We say $\pi: \X \to \Y$ has {\bf bounded complexity} with respect to the F\o lner sequence $(F_n)$ if 
$\h^{U, (F_n)}(\X \,|\, \pi) = 0$
for every rate function $U$.
Equivalently, $\pi$ has bounded complexity if for every $\epsilon > 0$ and every partition $P$ of $X$, $\limsup_{n \to \infty} \cov(\mu, P, F_n, \epsilon \,|\, \pi) < \infty$.
\end{definition}

The purpose of this section is to prove the following characterization of isometric extensions.

\begin{theorem} \label{theorem: IsometricEquivBounded}
    Suppose $\X$ is ergodic and let
    $\pi : \X \to \Y$
    be an extension.  Then the following are equivalent.
    \begin{enumerate}
        \item $\pi$ is isometric.
        \item $\pi$ has bounded complexity with respect to some F\o lner sequence.
        \item $\pi$ has bounded complexity with respect to every F\o lner sequence.
    \end{enumerate}
\end{theorem}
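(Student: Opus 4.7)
The implication $(3) \Rightarrow (2)$ is immediate from the definitions; the substance lies in $(1) \Rightarrow (3)$ and $(2) \Rightarrow (1)$.

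My plan for $(1) \Rightarrow (3)$ is a direct calculation on the skew-product model $\X \cong \Y \times_\a H/K$. By combining \Cref{theorem: GeneratingPartitionsDominate} with \Cref{lemma: ApproximateByNearbyPartition}, it suffices to bound $\cov(\mu, P, F_n, \epsilon \,|\, \pi)$ uniformly in $n$ for $P$ a product partition $\{A_i \times B_j\}$ in which each $B_j$ has Haar-null boundary. Fix any $H$-invariant metric $d$ on $H/K$ and, given $\epsilon > 0$, choose $\delta > 0$ so that the $\delta$-neighborhood $\partial_\delta$ of $\bigcup_j \partial B_j$ has $m_{H/K}(\partial_\delta) < \epsilon^2$; let $L$ be the $\delta/3$-covering number of $H/K$. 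On a fiber $\{y\} \times H/K$ the conditional measure is $\delta_y \times m_{H/K}$, the $\Y$-coordinate of each $(P, F_n)$-name is constant in $hK$, and each translation $hK \mapsto \a(f, y)hK$ is a $d$-isometry. Translation invariance of Haar measure yields
\[
\int_{H/K} \frac{1}{|F_n|} \sum_{f \in F_n} 1_{\partial_\delta}(\a(f,y)hK) \, dm_{H/K}(hK) \ = \ m_{H/K}(\partial_\delta) \ < \ \epsilon^2
\]
for every $y$ and every $n$, so Markov's inequality produces a set $G^n_y \subseteq H/K$ of Haar measure at least $1-\epsilon$ on which this integrand is below $\epsilon$. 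If $hK \in G^n_y$ lies in one of the $L$ chosen $\delta/3$-balls and $h'K$ is any other point of that ball, then $d(\a(f,y)hK,\a(f,y)h'K) < \delta$ by isometry, so whenever $\a(f,y)hK \notin \partial_\delta$ we have $B_j(\a(f,y)h'K) = B_j(\a(f,y)hK)$. Hence $d_{P,F_n}((y,hK),(y,h'K)) < \epsilon$, which gives $\cov(\mu_y, P, F_n, \epsilon) \leq L$ uniformly in $y$ and $n$.

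For $(2) \Rightarrow (1)$, my plan is to reduce to the nontrivial relatively-weakly-mixing case via the Furstenberg--Zimmer structure theorem. Let $\mathbf{Z}$ be the maximal intermediate factor such that $\mathbf{Z} \to \Y$ is isometric; then $\pi_\mathbf{Z} : \X \to \mathbf{Z}$ is relatively weakly mixing. Applying \Cref{theorem: MonotoneUnderMoreConditioning} to the factor map $\mathbf{Z} \to \Y$ shows that the bounded-complexity hypothesis on $\pi$ is inherited by $\pi_\mathbf{Z}$. It therefore suffices to prove that a \emph{nontrivial} relatively weakly mixing extension cannot have bounded complexity with respect to any F\o lner sequence, for then $\X = \mathbf{Z}$ and $\pi$ is isometric.

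To establish the last assertion, fix a partition $P = \{P_1, \dots, P_r\}$ of $X$ that is not $\mathbf{Z}$-measurable, and write the disintegration of $\mu$ over $\pi_\mathbf{Z}$ as $\mu = \int \mu^{\pi_\mathbf{Z}}_z \, d\zeta(z)$. Relative weak mixing of $\pi_\mathbf{Z}$ is equivalent to ergodicity of the relative self-joining $(X \times_\mathbf{Z} X, \mu \times_\mathbf{Z} \mu)$ over $\mathbf{Z}$, so the mean ergodic theorem gives $d_{P, F_n} \to \b \circ \pi_\mathbf{Z}$ in $L^2(\mu \times_\mathbf{Z} \mu)$, where $\b(z) = 1 - \sum_i \mu^{\pi_\mathbf{Z}}_z(P_i)^2$ is strictly positive on some measurable $A \subseteq Z$ with $\zeta(A) > 0$. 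Fubini then gives $\mu^{\pi_\mathbf{Z}}_z \otimes \mu^{\pi_\mathbf{Z}}_z(\{d_{P, F_n} \leq \epsilon\}) \to 0$ for $\zeta$-most $z \in A$, for any $0 < \epsilon < \tfrac12 \inf_A \b$. A Cauchy--Schwarz bound on any $\epsilon$-cover $\{B_1, \dots, B_M\}$ of $\mu^{\pi_\mathbf{Z}}_z$ yields
\[
(1-\epsilon)^2 \ \leq \ M \sum_{i=1}^M \mu^{\pi_\mathbf{Z}}_z(B_i)^2 \ \leq \ M \cdot \mu^{\pi_\mathbf{Z}}_z \otimes \mu^{\pi_\mathbf{Z}}_z(\{d_{P, F_n} \leq \epsilon\}),
\]
forcing $\cov(\mu^{\pi_\mathbf{Z}}_z, P, F_n, \epsilon) \to \infty$ on $A$. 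Choosing $\epsilon < \zeta(A)$ prevents excluding $A$ in the definition of the relative covering number, whence $\cov(\mu, P, F_n, \epsilon \,|\, \pi_\mathbf{Z}) \to \infty$, contradicting bounded complexity. I expect the main technical obstacle to be this Fubini passage, which transfers $L^2$ convergence on $\mu \times_\mathbf{Z} \mu$ to fiberwise convergence of the two-point quantities $\mu^{\pi_\mathbf{Z}}_z \otimes \mu^{\pi_\mathbf{Z}}_z(\{d_{P, F_n} \leq \epsilon\})$ on a $\zeta$-positive set of $z$.
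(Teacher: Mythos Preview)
Your argument for $(1)\Rightarrow(3)$ follows the same outline as the paper's: work in the skew-product model, verify bounded covering for a relatively generating family of partitions built from the $H/K$-coordinate, and finish with \Cref{theorem: GeneratingPartitionsDominate}. Your direct Markov estimate via Haar invariance is in fact slightly cleaner than the paper's appeal to the mean ergodic theorem, and gives the bound for \emph{every} $y$ rather than most. The only technical wrinkle is that ``$\a(f,y)hK\notin\partial_\delta$ implies same $B_j$-cell'' needs a word of justification in a general metric space; the paper sidesteps this by approximating each cell from inside by a compact set and taking $\delta$ to be the minimum pairwise distance between these compacts.

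For $(2)\Rightarrow(1)$ your route is genuinely different from the paper's. The paper argues directly over $\Y$ with a two-set partition $P$ chosen independent of $\mathbf{Z}$: bounded complexity plus pigeonhole produces a positive-density set of time-pairs $(s,t)$ for which $T^{s^{-1}}P$ and $T^{t^{-1}}P$ are \emph{close} in most fibers of $\pi$, while conditional weak mixing forces a density-one set of pairs for which they are approximately \emph{independent}, and these collide. You instead pass to $\pi_{\mathbf Z}:\X\to\mathbf Z$ via \Cref{theorem: MonotoneUnderMoreConditioning}, then exploit ergodicity of the relative product $\X\times_{\mathbf Z}\X$ together with the elementary Cauchy--Schwarz bound $\cov(\mu_z,P,F_n,\epsilon)\ge(1-\epsilon)^2\big/(\mu_z\otimes\mu_z)\{d_{P,F_n}\le\epsilon\}$. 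This is shorter and more conceptual; the paper's combinatorial lemma about time-pairs, on the other hand, is reused in the rigidity section (\Cref{theorem: RelativeRigidtySufficientCondition}), so it has independent value.

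Two small corrections to your $(2)\Rightarrow(1)$ sketch. First, since $\X$ is ergodic the relative product $\X\times_{\mathbf Z}\X$ is \emph{absolutely} ergodic, so the mean ergodic limit of $d_{P,F_n}$ is the constant $c=\int\beta\,d\zeta>0$, not the function $\beta\circ\pi_{\mathbf Z}$; this only simplifies matters (choose $\epsilon<c/2$ globally, no need to single out the set $A$ or worry that $\inf_A\beta$ might be zero). Second, $L^2$-convergence of $d_{P,F_n}$ does not give $(\mu_z\otimes\mu_z)\{d_{P,F_n}\le\epsilon\}\to0$ for a \emph{fixed} positive-measure set of $z$, only $\int(\mu_z\otimes\mu_z)\{d_{P,F_n}\le\epsilon\}\,d\zeta(z)\to0$. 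But that is enough: your Cauchy--Schwarz inequality shows $\{z:\cov(\mu_z,P,F_n,\epsilon)\le M\}\subseteq\{z:(\mu_z\otimes\mu_z)\{d_{P,F_n}\le\epsilon\}\ge(1-\epsilon)^2/M\}$, and Markov's inequality drives the $\zeta$-measure of the latter below $1-\epsilon$ for large $n$, forcing $\cov(\mu,P,F_n,\epsilon\mid\pi_{\mathbf Z})>M$.
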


\subsection{Isometric implies bounded}

\begin{proposition} \label{proposition: IsometricImpliesBounded}
If $\pi$ is isometric, then it has bounded complexity with respect to any F\o lner sequence.
\end{proposition}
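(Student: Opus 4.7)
The plan is to use the isometric structure directly to bound the relative covering numbers by a constant independent of $n$. First, by the isomorphism invariance of relative slow entropy (\Cref{theorem: MonotoneUnderExtensions} and \Cref{theorem: MonotoneUnderMoreConditioning}), I may assume $\pi$ is the projection $\mathbf{Y \times_\alpha H/K} \to \Y$, so that the disintegration is simply $\mu_y = \delta_y \times m_{H/K}$. Since $H$ is a compact group, it admits a bi-invariant metric $d$, which descends to an $H$-invariant metric on the compact quotient $H/K$; for each $\delta > 0$ let $N(\delta) < \infty$ be the minimum number of $\delta$-balls needed to cover $H/K$.

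Fix a partition $P$ of $X$ and $\epsilon > 0$, and let $\epsilon'$ be the constant furnished by \Cref{lemma: ApproximateByNearbyPartition}. The next step is to approximate $P$ in $\dist_\mu$ by a ``product'' partition $P'$ of the form $P'(y, hK) = \psi(y, j(hK))$, where $\{B_1, \dots, B_s\}$ is a finite partition of $H/K$ with $m_{H/K}(\partial B_j) = 0$ for every $j$ and $j(hK)$ is the index of the cell containing $hK$. Such $P'$ are dense in $\dist_\mu$ among finite partitions of $X$ with the same number of cells as $P$ (rectangles generate $\mathcal{B}_Y \otimes \mathcal{B}_{H/K}$ and nullboundary sets are dense in $\mathcal{B}_{H/K}$), so by \Cref{lemma: ApproximateByNearbyPartition} it suffices to bound $\cov(\mu, P', F_n, \epsilon' \mid \pi)$.

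Choose $\delta > 0$ so small that the $2\delta$-neighborhood $\mathcal{D}_{2\delta}$ of $\bigcup_j \partial B_j$ satisfies $m_{H/K}(\mathcal{D}_{2\delta}) < (\epsilon'/2)^2$. For any $y \in Y$ and any $f \in G$, the $H$-invariance of $m_{H/K}$ gives
\[
m_{H/K}\bigl\{hK : \alpha(f,y)hK \in \mathcal{D}_{2\delta}\bigr\} \ = \ m_{H/K}(\mathcal{D}_{2\delta}),
\]
so Fubini and Markov yield a set $G_y \subseteq H/K$ with $m_{H/K}(G_y) > 1 - \epsilon'/2$ consisting of those $hK$ for which fewer than an $\epsilon'/2$ fraction of $f \in F_n$ satisfy $\alpha(f,y)hK \in \mathcal{D}_{2\delta}$. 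Now cover $H/K$ by $N(\delta)$ balls of radius $\delta$; for two points $hK, h'K$ in the same ball, $H$-invariance of $d$ gives $d(\alpha(f,y)hK, \alpha(f,y)h'K) < 2\delta$ for every $f$, so their $(P', F_n)$-names can disagree at position $f$ only when $\alpha(f,y)hK \in \mathcal{D}_{2\delta}$ or $\alpha(f,y)h'K \in \mathcal{D}_{2\delta}$. If both points lie in $G_y$, this happens on a fraction of $F_n$ less than $\epsilon'$, so $\cov(\mu_y, P', F_n, \epsilon') \leq N(\delta)$ uniformly in $y \in Y$ and $n$; combined with \Cref{lemma: ApproximateByNearbyPartition} this gives the required uniform bound on $\cov(\mu, P, F_n, \epsilon \mid \pi)$.

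The main obstacle I anticipate is the approximation step: producing a product-type $P'$ with $m_{H/K}$-null cell boundaries that is close to $P$ in $\dist_\mu$ and has the same number of cells. This is a standard density argument but requires some care in turning an $L^1$-approximation by rectangles into a genuine partition. Beyond that, the argument is a soft Fubini-and-Markov calculation exploiting the isometry of the $H$-action on $H/K$; notably, no ergodic theorem is needed and the final bound $N(\delta)$ is independent of both $y$ and $n$.
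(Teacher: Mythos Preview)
Your proof is correct, and the overall strategy---reduce to product-type partitions and then use the isometry of the $H$-action on the fiber to bound covering numbers by a fixed metric covering number of $H/K$---matches the paper's. The execution differs in two respects worth noting.

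First, the paper passes from $H/K$ to $H$ via \Cref{theorem: MonotoneUnderExtensions} and then handles general partitions by exhibiting a relatively generating sequence of partitions of the form $\{Y\times Q_j\}$ and invoking \Cref{theorem: GeneratingPartitionsDominate}; you instead stay on $H/K$ and invoke \Cref{lemma: ApproximateByNearbyPartition} to approximate an arbitrary $P$ by a product-type $P'$. Both are legitimate uses of the paper's toolkit; the paper's route sidesteps the density-of-product-partitions step that you correctly flag as the main technical point.

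Second, and more interestingly, to control how often an orbit pair crosses cell boundaries the paper appeals to the mean ergodic theorem (using the standing ergodicity hypothesis, and yielding a bound only for large $n$ and for $y$ in a set of measure $\geq 1-\epsilon$), whereas you use the $H$-invariance of $m_{H/K}$ directly via a Fubini--Markov argument. Your route is more elementary, does not use ergodicity, and gives a bound $N(\delta)$ uniform in both $y$ and $n$.

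One small technical correction: the assertion that two points at distance $<2\delta$ lying in different cells must have one of them in the $2\delta$-neighborhood of $\bigcup_j\partial B_j$ can fail in general compact metric spaces (e.g.\ for a clopen partition the boundaries are empty). The fix is painless: replace $\mathcal{D}_{2\delta}$ by the set $E_\delta=\{u:B(u,2\delta)\not\subseteq B_{j(u)}\}$, which trivially contains any such point and satisfies $m_{H/K}(E_\delta)\to 0$ since $E_\delta$ decreases to a subset of $\bigcup_j\partial B_j$. With this adjustment your argument goes through unchanged.
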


Every homogeneous skew product as in \Cref{definition: IsometricExtension} is a factor of a group rotation skew product on $Y \times H$ (i.e. a homogeneous skew product with $K$ the trivial subgroup).
So, by \Cref{theorem: MonotoneUnderExtensions}, it is sufficient to assume that $X = Y \times H$, $\mu = \nu \times m_H$, and $T^g(y,h) = (S^gy, \a(g, y)h)$ for some cocycle $\a$ from $Y$ to $H$.
Let $\rho$ be a translation-invariant metric on $H$, and let $(F_n)$ be any choice of F\o lner sequence for $G$.

\begin{proposition} \label{proposition: IsometricImpliesBoundedForSpecialPartitions}
Let $Q = \{Q_1, \dots, Q_k\}$ be a partition of $H$.
Let $P$ be the partition $\{Y \times Q_1, \dots, Y \times Q_k\}$ of $X$.
Then for any $\epsilon > 0$, there exists $L = L(Q, \epsilon)$ such that
\[
\cov(\mu, P, F_n, \epsilon \,|\, \pi) \ \leq \ L
\]
for all $n$ sufficiently large.
\end{proposition}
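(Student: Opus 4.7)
My plan is to exploit the homogeneous product structure: on the fiber $\pi^{-1}(y) = \{y\} \times H$ we have $\mu_y = \delta_y \times m_H$, and on the $n$-th iterate the transformation acts on the $H$-coordinate by left translation by $\alpha(f,y)$. So the $d_{P,F_n}$-distance between $(y,h)$ and $(y,h')$ is
\[
\frac{1}{|F_n|} \sum_{f \in F_n} 1_{Q(\alpha(f,y)h) \,\neq\, Q(\alpha(f,y)h')},
\]
and the key idea is to find a single finite collection of $H$-balls that witnesses the covering uniformly in $y$ and in $n$, using translation-invariance of both $\rho$ and $m_H$.

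First I would use inner regularity of $m_H$ to pick, for any small $\eta > 0$, disjoint compact sets $Q_i' \subseteq Q_i$ with $m_H(H \setminus \bigcup_i Q_i') < \eta$. Setting $H' = \bigcup_i Q_i'$ and $\delta = \frac{1}{2}\min_{i \neq j} \rho(Q_i', Q_j') > 0$, any two points of $H'$ that are $\rho$-closer than $\delta$ lie in the same cell of $Q$. Then by compactness of $(H,\rho)$, cover $H$ by finitely many open balls $B_1, \dots, B_L$ of $\rho$-radius $\delta/2$; this $L$ depends only on $Q$ and $\epsilon$.

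Next, for fixed $y \in Y$ and fixed $n$, define the "good" fiberwise set
\[
G_y^{(n)} \ := \ \Big\{h \in H : \tfrac{1}{|F_n|} \sum_{f \in F_n} 1_{H \setminus H'}(\alpha(f,y) h) < \sqrt{\eta}\Big\}.
\]
The crucial point is that because $m_H$ is invariant under left translation by each $\alpha(f,y)$, the $m_H$-integral of the inner average equals $m_H(H \setminus H') < \eta$ exactly, for every $y$ and every $n$. Markov's inequality then gives $m_H(G_y^{(n)}) \geq 1 - \sqrt{\eta}$ uniformly in $y$ and $n$.

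Finally, put $E_j^y = \{y\} \times (B_j \cap G_y^{(n)})$. Then $\mu_y(\bigcup_j E_j^y) \geq 1 - \sqrt{\eta}$, and for any two points $(y,h), (y,h') \in E_j^y$, translation-invariance of $\rho$ gives $\rho(\alpha(f,y)h, \alpha(f,y)h') = \rho(h,h') < \delta$ for every $f$; so whenever both $\alpha(f,y)h$ and $\alpha(f,y)h'$ land in $H'$, they lie in the same $Q$-cell and contribute $0$ to $d_{P,F_n}$. The remaining "bad" times are at most $2\sqrt{\eta}$ fraction of $F_n$ by the definition of $G_y^{(n)}$, so $\diam_{P,F_n}(E_j^y) \leq 2\sqrt{\eta}$. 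Choosing $\eta = \epsilon^2/4$ makes both $\sqrt{\eta} \leq \epsilon$ and $2\sqrt{\eta} \leq \epsilon$, so $\cov(\mu_y, P, F_n, \epsilon) \leq L$ for every $y$ and every $n$, which implies $\cov(\mu, P, F_n, \epsilon \,|\, \pi) \leq L$.

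The potential obstacle I had to watch for is that the partition $Q$ is only measurable, not geometric, so one cannot directly convert $\rho$-closeness to $Q$-agreement; the inner regularity step is what handles this. The other thing I had to be careful about is getting a bound uniform in both $y$ and $n$ (not just "$n$ large"), but this is essentially free once one notices that the translation-invariance of $m_H$ makes the Markov estimate independent of $n$ and $y$. In fact this proof gives the stronger statement that the cover bound $L$ holds for all $n$, not merely for $n$ sufficiently large.
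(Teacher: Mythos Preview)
Your proof is correct and follows the same overall strategy as the paper's: approximate the cells $Q_i$ from inside by disjoint compacta, use the positive gap $\delta$ between them to convert $\rho$-closeness into $Q$-agreement, and cover $H$ by finitely many $\delta/2$-balls.

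There is, however, one genuine difference worth noting. The paper controls the time-average of visits to the ``bad'' set $E = H \setminus H'$ by appealing to the mean ergodic theorem applied to $\bar{E} = Y \times E$ in the full system $\X$; this is why the paper's bound only kicks in for $n$ sufficiently large and implicitly uses the standing ergodicity hypothesis on $\X$. You instead observe that, fiberwise, the average $\frac{1}{|F_n|}\sum_{f\in F_n} 1_{H\setminus H'}(\alpha(f,y)h)$ has $m_H$-integral exactly $m_H(H\setminus H')$ for \emph{every} $y$ and \emph{every} $n$, simply because $m_H$ is invariant under each left translation $h \mapsto \alpha(f,y)h$. Markov's inequality then gives the good set $G_y^{(n)}$ uniformly. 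This is a cleaner and strictly stronger argument: it removes the need for ergodicity and yields the bound $\cov(\mu, P, F_n, \epsilon \,|\, \pi) \leq L$ for all $n$, not just sufficiently large $n$, exactly as you point out at the end.
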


\begin{proof}
For each $i$, let $Q_i'$ be a compact subset of $Q_i$ such that 
\[
    m_H(Q_i') \ > \ (1-\epsilon/4) \cdot m_H(Q_i).
\]
Let $E = H \setminus \bigcup Q_i'$, so $m_H(E) \leq \epsilon/4$.
Let $\bar{E} = Y \times E$, so $\mu(\bar{E}) \leq \epsilon/4$.
Now because the $Q_i'$ are pairwise disjoint compact sets, there is some $\delta = \delta(Q, \epsilon)$ such that $\rho(Q_i', Q_j') \geq \delta$ for all $i\neq j$.
Let $L$ be the smallest number of balls of $\rho$-radius at most $\delta/2$ required to cover $H$, and let $B_1, \dots, B_L \subseteq H$ be a collection of such balls.
Let $\bar{B_i} = Y \times B_i$.

We claim that $\cov(\mu, P, F_n, \epsilon \,|\, \pi) \leq L$ for all $n$ sufficiently large.
By the mean ergodic theorem and the fact that $\mu(\bar{E}) \leq \epsilon/4$, we have
\[
\mu \left\{ (y,h) : \frac{1}{|F_n|} \sum_{g \in F_n} 1_{\bar{E}}(T^g(y,h)) < \epsilon/2 \right\} \ \to \ 1
\]
as $n \to \infty$.
Call this set $X'$, and let $n$ be sufficiently large so that $\mu(X') \geq 1-\epsilon^2$.
By Markov's inequality, we have a set $S \subseteq Y$ with $\nu(S) \geq 1-\epsilon$ such that $\mu_y(X') \geq 1-\epsilon$ for all $y \in S$.
Fix $y \in S$; we now estimate $\cov(\mu_y, P, F_n, \epsilon)$.
Let $B_i' = \bar{B_i} \cap X' \cap \pi^{-1}y$.
Because the $\bar{B_i}$ cover all of $X$, we have
\[
\mu_y \left( \bigcup B_i' \right) \ = \ \mu_y(X') \ \geq \ 1-\epsilon.
\]
Therefore we just need to estimate $\diam_{P,F_n}(B_i')$.

Suppose $(y,h), (y,h') \in B_i'$.
We have
\begin{align*}
d_{P,F_n}((y,h), (y,h')) \ &= \ \frac{1}{|F_n|} \sum_{g \in F_n} 1_{P(T^g(y,h)) \neq P(T^g(y,h'))} \\
&= \ \frac{1}{|F_n|} \sum_{g \in F_n} 1_{P(S^gy, \a(g, y)h) \neq P(S^g y, \a(g, y)h')} \\
&= \ \frac{1}{|F_n|} \sum_{g \in F_n} 1_{Q(\a(g, y)h) \neq Q(\a(g, y)h')}.
\end{align*}
By definition of the $B_i$, we have $\rho(h,h') \leq \delta/2$, and because $\rho$ is translation invariant, we also have $\rho(\a(g, y)h, \a(g, y)h') \leq \delta/2$ for all $g$.
Therefore if $\a(g, y)h$ and $\a(g, y)h'$ are in different cells of $Q$, it must be the case that either $\a(g, y)h \in E$ or $\a(g, y)h' \in E$.
So the above becomes
\begin{align*}
d_{P,F_n}((y,h), (y,h')) \ &\leq \ \frac{1}{|F_n|} \sum_{g \in F_n} 1_{E}(\a(g, y)h) + 1_{E}(\a(g, y)h') \\
&= \ \frac{1}{|F_n|} \sum_{g \in F_n} 1_{\bar{E}}(T^g(y,h)) + 1_{\bar{E}}(T^g(y,h')) \\
&\leq \ \epsilon
\end{align*}
because $(y,h), (y,h') \in X'$.
So we have $\diam_{P,F_n}(B_i') \leq \epsilon$.
This shows that 
\[
\cov(\mu_y, P, F_n, \epsilon) \ \leq \ L
\]
for all $y \in S$ and $n$ sufficiently large as desired.
\end{proof}

\begin{proof}[Proof of \Cref{proposition: IsometricImpliesBounded}]
For each $m$, let $Q_m$ be a partition of $H$ into sets of diameter at most $1/m$ and let $P_m = \{ Y \times C : C \in Q_m \}$ as in \Cref{proposition: IsometricImpliesBoundedForSpecialPartitions}.
It is clear that the sequence $(P_m)_{m=1}^{\infty}$ is generating for $\X$ relative to $\pi$.
By \Cref{proposition: IsometricImpliesBoundedForSpecialPartitions}, we have $\limsup_{n \to \infty} \cov(\mu, P_m, F_n, \epsilon \,|\, \pi) < \infty$ for every $m$ and every $\epsilon > 0$.
Then by \Cref{theorem: GeneratingPartitionsDominate}, for any partition $R$ of $X$, we have
\[
\h^{U, (F_n)}(\X, R \,|\, \pi) \ \leq \ \h^{U, (F_n)}(\X \,|\, \pi) \ = \ \lim_{m \to \infty} \h^{U, (F_n)}(\X, P_m \,|\, \pi) \ = \ 0
\]
for any rate function $U$, as desired.
\end{proof}

\subsection{Background on conditional weak mixing}

The second half of the proof of \Cref{theorem: IsometricEquivBounded} requires the theory of compact and weakly mixing extensions originally developed in \cite{furstenberg1977szemeredi}.
All of the necessary background material presented here can be found in \cite[Chapter 3 and Appendix D]{KerrLi2016Ergodic}.

\begin{definition} \label{definition: AbsoluteDensity}
    We say that a subset $\Gamma \subseteq G$ has \textbf{absolute density $1$} if
    \[
    \lim_{n \to \infty} \frac{|\Gamma \cap F_n|}{|F_n|} \ = \ 1
    \]
    for \emph{any} F\o lner sequence $(F_n)$.
\end{definition}

\begin{definition}
For $y \in Y$ and $f,g \in L^2(\X)$, define
\[
\ab{f,g}_y \ := \ \int f \bar{g} \,d\mu_y.
\]
Let $L^2(\X \,|\,\pi)$ denote the space of $f \in L^2(\X)$ such that $y \mapsto \ab{f,f}_y \in L^\infty(\Y)$.
We also say that $f,g \in L^2(\X \,|\, \pi)$ are \textbf{conditionally orthogonal given $\pi$} if $\ab{f,g}_y = 0$ for $\nu$-a.e. $y \in Y$.
\end{definition}

In this section, we identify the action $T$ with its Koopman representation on $L^2(\X)$.
So, for $g \in G$ and $f \in L^2(\X)$, we write $T^g f$ to mean $f \circ T^g$.

\begin{definition} \label{definition: ConditionalWeakMixing}
A function $f \in L^2(\X \,|\, \pi)$ is said to be \textbf{conditionally weakly mixing given $\pi$} if 
for any F\o lner sequence $(F_n)$ and any $g \in L^2(\X\,|\,\pi)$, we have
\[
    \lim_{n \to \infty} \frac{1}{|F_n|} \sum_{s \in F_n} \int \abs{ \ab{T^s f, g}_y } \,d\nu(y) \ = \ 0.
\]
Equivalently, for any $g \in L^2(\X\,|\,\pi)$ and any $\epsilon > 0$, the set
\[
    \Gamma_{f,g,\epsilon} \ := \ \left\{ s \in G : \int \abs{ \ab{T^s f, g}_y } \,d\nu(y) < \epsilon \right\}
\]
has absolute density $1$.
The set of all conditionally weakly mixing functions is denoted $W(\X \,|\, \pi)$.
\end{definition}

The main fact we will need to use is the following characterization of the maximal intermediate isometric extension (essentially \cite[Proposition 3.9 and Lemma 3.11]{KerrLi2016Ergodic}).

\begin{theorem}
\label{theorem: RelativeKroneckerProperties}
Let $\X$ be ergodic and 
$\pi: \X \to \Y$
be an extension.
Then there exists an intermediate
extension $\X \to \mathbf{Z} \to \Y$
such that
\begin{itemize}
    \item 
    $\mathbf{Z}$
    is the maximal isometric extension of $\Y$ in $\X$, and
    \item For $f \in L^2(\X \,|\, \pi)$, $f \in W(\X \,|\, \pi)$ if and only if $f$ is conditionally orthogonal to every 
    $\mathbf{Z}$-measurable $h \in L^2(\X \,|\, \pi)$.
\end{itemize}
\end{theorem}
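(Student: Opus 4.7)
The plan is to follow the relative Furstenberg--Zimmer structure theory. The key auxiliary notion is that of a \emph{conditionally almost periodic} (CAP) function over $\pi$: $f \in L^2(\X \,|\, \pi)$ is CAP if for every $\epsilon > 0$ there exist finitely many $f_1,\dots,f_k \in L^2(\X \,|\, \pi)$ and a set $Y_0 \subseteq Y$ with $\nu(Y_0) > 1-\epsilon$ such that for every $g \in G$ and $\nu$-a.e.\ $y \in Y_0$, $\min_i \|T^g f - f_i\|_y < \epsilon$, where $\|\cdot\|_y := \ab{\cdot,\cdot}_y^{1/2}$. Let $AP(\X \,|\, \pi)$ denote the set of CAP functions.

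First I would verify that $AP(\X \,|\, \pi)$ is a closed subspace of $L^2(\X \,|\, \pi)$ that is invariant under every $T^g$, under multiplication by bounded $\pi^{-1}\mathcal{B}_Y$-measurable functions, and (for bounded CAP functions) under products and complex conjugation. Hence the bounded CAP functions form a $T$-invariant von Neumann subalgebra of $L^\infty(\X)$ containing $\pi^{-1} L^\infty(\Y)$, which corresponds to a $T$-invariant sub-$\sigma$-algebra $\mathcal{C}$ with $\pi^{-1}\mathcal{B}_Y \subseteq \mathcal{C} \subseteq \mathcal{B}_X$. This defines the intermediate factor $\mathbf{Z} = (X, \mathcal{C}, \mu, T)$. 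I would then invoke the Furstenberg structure theorem for compact extensions to conclude that the extension $\mathbf{Z} \to \Y$ is measurably isomorphic to a homogeneous skew product $\Y \times_\a H/K \to \Y$; conversely, any intermediate isometric factor is already generated by CAP functions, since the coordinate functions on $H/K$ pull back to elements of $L^2(\X \,|\, \pi)$ whose conditional orbits have compact closure. So $\mathbf{Z}$ is maximal among isometric intermediate extensions.

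Second, for the orthogonality dichotomy, the forward direction is essentially immediate: any $\mathbf{Z}$-measurable $h \in L^2(\X \,|\, \pi)$ lies in $AP(\X \,|\, \pi)$, so its orbit $\{T^s h\}$ is covered by finitely many $\epsilon$-balls in the fiber norms off a small set, which easily forces the vanishing averages defining $W(\X \,|\, \pi)$ to imply $\ab{f,h}_y \equiv 0$. For the converse, if $f \in L^2(\X \,|\, \pi)$ is conditionally orthogonal to every CAP function, one builds a positive self-adjoint conditional Hilbert--Schmidt operator on $L^2(\X \,|\, \pi)$ as a weak-$*$ limit along a F\o lner sequence of averages of the rank-one kernels $g \mapsto \ab{T^s f, g}_y \cdot T^s f$. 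A standard argument (eigenfunction decomposition in the $L^\infty(\Y)$-Hilbert module sense) shows that the range of such an operator is contained in $AP(\X \,|\, \pi)$, so by hypothesis the operator vanishes, and an application of Cauchy--Schwarz translates this vanishing into $f \in W(\X \,|\, \pi)$.

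The main obstacle is the structure-theoretic step: identifying the $\sigma$-algebra generated by CAP functions with that of a genuine homogeneous skew product $\Y \times_\a H/K$. This requires an $L^\infty(\Y)$-Hilbert module analogue of the Peter--Weyl theorem together with measurable selection arguments to construct a measurable equivariant trivialization of the fibers, and is the substantive content of the relative Furstenberg--Zimmer theorem as developed in \cite[Chapter 3 and Appendix D]{KerrLi2016Ergodic}. Once this machinery is in place, both the maximality of $\mathbf{Z}$ and the orthogonality criterion are straightforward consequences.
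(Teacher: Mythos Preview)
The paper does not prove this statement at all; it is quoted as background material and attributed to \cite[Proposition 3.9 and Lemma 3.11]{KerrLi2016Ergodic}. Your outline recapitulates exactly the standard Furstenberg--Zimmer argument that appears in that reference (conditionally almost periodic functions generate the maximal isometric sub-$\sigma$-algebra, and the weak-mixing/compact dichotomy via conditional Hilbert--Schmidt operators), so there is nothing to compare --- you are sketching the proof the paper merely cites.
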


\subsection{Bounded implies isometric}

\begin{proposition} \label{proposition: BoundedImpliesIsometric}
If $\pi$ has bounded complexity with respect to some F\o lner sequence $(F_n)$, then it is isometric.
\end{proposition}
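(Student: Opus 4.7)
The plan is to establish the contrapositive: assuming $\pi$ is not isometric, I will exhibit a partition $P$ and $\epsilon > 0$ forcing $\cov(\mu, P, F_n, \epsilon \,|\, \pi) \to \infty$, contradicting bounded complexity with respect to $(F_n)$. By \Cref{theorem: RelativeKroneckerProperties}, non-isometry means that the maximal intermediate isometric extension $\mathbf{Z}$ is strictly smaller than $\X$, so there exists a nonzero $f \in W(\X \,|\, \pi)$. Standard reductions (taking a real part, truncating, subtracting $\E{f \,|\, \pi}$, and rescaling) allow me to assume $\|f\|_\infty \leq 1$, $\int f\,d\mu_y = 0$ for $\nu$-a.e.\ $y$, and $\int f^2\,d\mu = c > 0$, while preserving membership in $W(\X \,|\, \pi)$. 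I then take $P$ to be the pullback under $f$ of a partition of $[-1,1]$ into intervals of length $\delta$, so that $P(x) = P(x')$ forces $|f(x) - f(x')| \leq \delta$; this yields the key estimate
\[
d_{P,F_n}(x,x') \leq \epsilon \quad \Longrightarrow \quad \frac{1}{|F_n|} \sum_{s \in F_n}(f(T^sx) - f(T^sx'))^2 \leq \delta^2 + 4\epsilon.
\]

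The next step uses the conditional weak mixing of $f$ to produce arbitrarily many almost-orthogonal translates inside $F_n$. For any target integer $M$ and error $\eta$, the density-$1$ sets $\Gamma_{f,T^{s_j}f,\eta}$ of \Cref{definition: ConditionalWeakMixing} have intersection of density $1$ in any F\o lner sequence; a greedy selection of $s_1, \dots, s_M \in F_n$, valid for all $n$ large enough, then yields
\[
\int \bigl| \ab{T^{s_i}f,\,T^{s_j}f}_y \bigr| \,d\nu(y) \ < \ \eta \qquad \text{for all } i \neq j.
\]
Combined with the mean ergodic theorem for $\Y$ applied to $y \mapsto \int f^2\,d\mu_y \in L^1(\nu)$, this means that on a set $Y_M \subseteq Y$ of $\nu$-measure close to $1$, the family $\{T^{s_i}f\}_{i=1}^M$ is conditionally an almost-orthonormal system of norm $\approx \sqrt{c}$ in $L^2(\mu_y)$.

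The main obstacle is the final geometric step: turning this almost-orthogonality into a quantitative lower bound on $\cov(\mu_y, P, F_n, \epsilon)$. My plan is to consider the map $\Phi \colon X \to \R^M$ defined by $\Phi(x) = (f(T^{s_i}x))_{i=1}^M$. On one side, the partition estimate above forces any $d_{P,F_n}$-ball of diameter $\epsilon$ to have $\Phi$-image contained in an $\ell^2$-ball of radius at most $\sqrt{|F_n|(\delta^2+4\epsilon)}$. On the other side, the conditional near-orthogonality of the $T^{s_i}f$ in $\mu_y$ gives that the covariance of $\Phi_*\mu_y$ is close to $c\cdot I_M$, so $\Phi_*\mu_y$ is well-spread in $M$ directions and ought to require many small $\ell^2$-balls to cover. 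The delicate point is the scale-matching: we must choose $M = M(n) \to \infty$ growing at the right rate relative to $|F_n|$ and apply a Paley--Zygmund or anti-concentration argument to convert near-isotropy of the covariance into a genuine lower bound on the number of $\ell^2$-balls needed to capture mass $\geq 1 - \epsilon$ of $\Phi_*\mu_y$. Pushing this through produces a positive-$\nu$-measure set $S \subseteq Y$ on which $\cov(\mu_y, P, F_n, \epsilon) \to \infty$, contradicting bounded complexity and completing the proof.
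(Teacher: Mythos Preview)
Your strategy diverges from the paper's and has a real gap exactly where you flag it as ``delicate.'' The paper does not attempt to drive $\cov(\mu, P, F_n, \epsilon \mid \pi)$ to infinity. It assumes instead that $\cov \leq L$ for all $n$ and uses this boundedness: a pigeonhole over the $|P|^L$ possible name-patterns of the $L$ ball-centres yields a set of pairs $(s,t) \in F_n^2$ of \emph{positive density} on which $\dist_{\mu_y}(T^{-s}P, T^{-t}P)$ is small for most $y$ (\Cref{lemma: CloseTogetherPartition}). Independently, conditional weak mixing gives a density-$1$ set of pairs on which $T^{-s}P$ and $T^{-t}P$ are \emph{approximately independent} in most fibers (\Cref{lemma: IndependentPartition}, via \Cref{lemma: DiagonalSetDensity}). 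Intersecting produces one pair and one fiber where the two translated partitions are simultaneously close and nearly independent, which is impossible for a partition with both cells of fiber-measure $\geq 1/3$.

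Your route fails at the scale-matching step. The Hamming metric $d_{P,F_n}$ averages over all of $F_n$, so the only bound you get on $\|\Phi(x)-\Phi(x')\|_2^2$ is the wasteful $\sum_{s\in F_n}(f(T^sx)-f(T^sx'))^2 \leq |F_n|(\delta^2+4\epsilon)$; nothing prevents the at most $\epsilon|F_n|$ ``bad'' times from all landing among your $M$ chosen coordinates. For this radius to be small compared with the spread $\sqrt{Mc}$ of $\Phi_*\mu_y$ you need $M$ to be a fixed positive fraction of $|F_n|$, but greedy selection of $s_1,\dots,s_M\in F_n$ with all pairwise differences in a density-$1$ set $\Gamma$ gives only $M$ on the order of the reciprocal of the density of $\Gamma^c$ in $F_n$-differences (a Tur\'an-type bound), which tends to infinity but with no rate tied to $|F_n|$. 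Even if you could arrange $M\sim|F_n|$, near-isotropy of the covariance does not by itself lower-bound covering numbers: writing $cI_M = W + B$ with $W$ the within-ball part ($\operatorname{tr}W\leq 4r^2$) and $B$ the between-ball part ($\operatorname{rank}B\leq L-1$, eigenvalues $\leq c$) gives only $L\geq M+1-4r^2/c$, which is useless once $r^2 \geq Mc/4$. A Paley--Zygmund inequality bounds the tail of one scalar; it does not convert ``covariance $\approx cI_M$'' into ``many small $\ell^2$-balls needed.'' The paper's pairwise contradiction avoids all of this by never needing a covering lower bound.
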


Suppose for contradiction that $\pi$ is not isometric but does have bounded complexity with respect to some F\o lner sequence $(F_n)$.
Let 
$\mathbf{Z}$ be the 
maximal intermediate isometric extension as in \Cref{theorem: RelativeKroneckerProperties}.
Because of the assumption that $\pi$ is not isometric, we know that 
$\mathbf{Z}$ is a strict factor of $\X$,
so we can choose a partition $P = \{P_0, P_1\}$ of $X$ satisfying
\begin{enumerate}
    \item $P$ is independent of 
    $\mathbf{Z}$ 
    \item $\mu_y(P_0), \mu_y(P_1) \geq 1/3$ for $\nu$-a.e. $y$.
    \footnote{If the factor map $\X \to \mathbf{Z}$ is infinite-to-one, then $1/3$ may be replaced by $1/2$.}
\end{enumerate}
Fix this partition for the rest of this section.
Also let $0 < \epsilon < 10^{-6}$ be fixed.
Finally, using the notation of \Cref{definition: PartitionDistance}, for $y \in Y$ we abbreviate $\dist_y := \dist_{\mu_y}$.

The outline of the proof of \Cref{proposition: BoundedImpliesIsometric} is as follows.
First, using the assumption that $\pi$ has bounded complexity, we will find a ``positive density'' set of pairs of times $(s,t) \in G^2$ such that $T^{s^{-1}}P$ and $T^{t^{-1}}P$ are very close to each other (in most of the fibers of $\pi$).  
This is \Cref{lemma: CloseTogetherPartition}.
Then, using the independence conditions built in to the definition of $P$, we show essentially that the partition $P$ is conditionally weakly mixing given $\pi$, which allows us to find a ``density one'' set of pairs of times $(s,t) \in G^2$ such that $T^{s^{-1}}P$ and $T^{t^{-1}}P$ are approximately independent of each other (in most of the fibers of $\pi$).
This is \Cref{lemma: IndependentPartition}.
Therefore we can find a pair of times $(s,t)$ for which $T^{s^{-1}}P$ and $T^{t^{-1}}P$ are both close together and approximately independent of each other.
But it is impossible for two nontrivial partitions to satisfy this, so we will get a contradiction.

\begin{lemma} \label{lemma: CloseTogetherPartition}
For $y \in Y$, define
\[
\C_y \ := \ \left\{(s,t) \in G^2 : \dist_y(T^{s^{-1}}P, T^{t^{-1}}P) \ < 5\sqrt{\epsilon} \right\}.
\]
Then there is a constant $c = c(\epsilon) > 0$ such that the following holds.
For every $n$, there is a set $Y_n \subseteq Y$ satisfying $\nu(Y_n) \geq 1-\epsilon$ and
\[
    \frac{|\C_y \cap F_n^2|}{|F_n^2|} \ \geq \ c(\epsilon)
\]
for all $y \in Y_n$.
\end{lemma}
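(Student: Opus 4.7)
The plan is to exploit bounded complexity to cover each fiber of $\pi$ by a short list of sets of small $d_{P,F_n}$-diameter, then pigeonhole on the $P$-names of representative points to locate many aligned time pairs $(s,t) \in F_n^2$. To begin, fix a precision $\delta > 0$ to be chosen in terms of $\epsilon$. By the bounded complexity hypothesis applied to $P$ with parameter $\delta$, there is some $L = L(\delta) < \infty$ such that $\cov(\mu, P, F_n, \delta \,|\, \pi) \leq L$ for all sufficiently large $n$. Thus I obtain $Y_n \subseteq Y$ with $\nu(Y_n) \geq 1 - \delta$ such that each $y \in Y_n$ admits a cover (which we may take to be disjoint) by sets $B_1^y, \dots, B_L^y$ with $d_{P,F_n}$-diameter $\leq \delta$ and $\mu_y\bigl( \bigcup_i B_i^y \bigr) \geq 1 - \delta$. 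Pick representatives $x_i^y \in B_i^y$ and define the profiles $\phi_i^y(s) := P(T^s x_i^y) \in \{0,1\}$.

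The pigeonhole step compares the combined profile $\Phi^y := (\phi_1^y, \dots, \phi_L^y) : F_n \to \{0,1\}^L$ at different times. By Cauchy--Schwarz, the set $\mathcal{A}^y := \{(s,t) \in F_n^2 : \Phi^y(s) = \Phi^y(t)\}$ of aligned pairs has size at least $|F_n|^2 / 2^L$. For such an aligned $(s,t)$ and any $x \in B_i^y$, one has $P(T^s x) = P(T^t x)$ unless $P(T^s x) \neq \phi_i^y(s)$ or $P(T^t x) \neq \phi_i^y(t)$, since $\phi_i^y(s) = \phi_i^y(t)$. Writing $q_i^y(s) := \mu_y\{x \in B_i^y : P(T^s x) \neq \phi_i^y(s)\} / \mu_y(B_i^y)$, this gives
\[
    \dist_y(T^{s^{-1}}P, T^{t^{-1}}P) \,\leq\, \delta + \sum_i \bigl[ q_i^y(s) + q_i^y(t) \bigr]\, \mu_y(B_i^y),
\]
where the $\delta$ accounts for mass outside $\bigcup_i B_i^y$.

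The diameter condition yields $\sum_{s \in F_n} \sum_i q_i^y(s)\, \mu_y(B_i^y) \leq \delta |F_n|$, so by Markov the set $G^y := \{ s \in F_n : \sum_i q_i^y(s)\, \mu_y(B_i^y) \leq \sqrt{\delta}\}$ has $|G^y| \geq (1 - \sqrt{\delta})|F_n|$. On $\mathcal{A}^y \cap (G^y \times G^y)$ we therefore have $\dist_y(T^{s^{-1}}P, T^{t^{-1}}P) \leq \delta + 2\sqrt{\delta}$; taking $\delta = \epsilon$ makes this $\leq 3\sqrt{\epsilon} < 5\sqrt{\epsilon}$, so all such pairs lie in $\C_y$. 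Counting: at most $2|F_n|^2 \sqrt{\delta}$ pairs of $\mathcal{A}^y$ have $s$ or $t$ outside $G^y$, so $|\C_y \cap F_n^2| \geq |F_n|^2 ( 2^{-L} - 2\sqrt{\delta})$.

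The main obstacle is that this straightforward application requires $\sqrt{\delta} \cdot 2^{L(\delta)}$ to be controlled, while bounded complexity a priori gives no growth rate for $L(\delta)$ as $\delta \to 0$. To finish, one chooses $\delta = \delta(\epsilon) > 0$ so that both $\delta + 2\sqrt{\delta} < 5\sqrt{\epsilon}$ and $2\sqrt{\delta} < 2^{-L(\delta)-1}$, which may require a two-scale refinement: first apply bounded complexity with a coarser $\delta_0$ to fix $L_0 = L(\delta_0)$, then refine within each $B_i^y$ using a much smaller $\delta_1 \ll 2^{-L_0}$ to obtain the Markov estimate. (Alternatively, one can use \emph{partial} alignment, requiring $\Phi^y(s)$ and $\Phi^y(t)$ to agree only on a set of indices $i$ of large $\mu_y(B_i^y)$-weight, at the cost of a slightly worse distance bound.) In either case, the resulting constant $c(\epsilon)$ depends only on $\epsilon$, and passing to a slightly smaller $Y_n$ (still of $\nu$-measure $\geq 1 - \epsilon$) completes the proof.
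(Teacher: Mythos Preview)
Your setup and the individual estimates are all correct and match the paper's ingredients almost exactly: cover the fiber by $L$ sets of small $d_{P,F_n}$-diameter, pick representatives, use a Markov-type averaging to find a large set of ``good'' times $s$ at which most points agree with their representative, and pigeonhole on the profile $(P(T^s x_i))_i \in \{0,1\}^L$. The difficulty you encounter at the end---needing $2\sqrt{\delta} < 2^{-L(\delta)}$ with no control on $L(\delta)$---is an artifact of the \emph{order} in which you combine the two steps, not a real obstruction.

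You pigeonhole first on all of $F_n$ (getting $|\mathcal{A}^y| \geq |F_n|^2 \cdot 2^{-L}$) and then subtract the pairs with a coordinate outside the Markov good set $G^y$ (losing up to $2\sqrt{\delta}\,|F_n|^2$), which yields the additive bound $|F_n|^2(2^{-L} - 2\sqrt{\delta})$ that may be negative. The paper simply swaps the order: it first applies Markov to isolate the good time set $A \subseteq F_n$ with $|A| \geq (1-\sqrt{2\epsilon})|F_n|$, and \emph{then} pigeonholes on the profile restricted to $A$, producing a single set $\mathcal{E} \subseteq A$ with $|\mathcal{E}| \geq (1-\sqrt{2\epsilon})\,2^{-L}\,|F_n|$. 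Since every pair in $\mathcal{E} \times \mathcal{E}$ is automatically both aligned and good, one gets
\[
\frac{|\C_y \cap F_n^2|}{|F_n|^2} \ \geq \ \bigl((1-\sqrt{2\epsilon})\,2^{-L}\bigr)^2,
\]
a multiplicative bound that is positive regardless of how $L = L(\epsilon)$ depends on $\epsilon$. In your notation, the same effect is achieved by applying Cauchy--Schwarz to the restriction $\Phi^y|_{G^y}$ rather than to $\Phi^y$ on all of $F_n$: this gives $|\mathcal{A}^y \cap (G^y \times G^y)| \geq |G^y|^2/2^L \geq (1-\sqrt{\delta})^2\,2^{-L}\,|F_n|^2$ directly. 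With this reordering, taking $\delta = \epsilon$ works immediately and no ``two-scale refinement'' or partial-alignment argument is needed.
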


\begin{proof}
Let $L = L(\epsilon) = \sup_{n} \cov(\mu, P, F_n, \epsilon \,|\, \pi)$ and let $n$ be arbitrary.
Let $Y_n$ be the set of $y \in Y$ such that $\cov(\mu_y, P, F_n, \epsilon) \leq L$.
We have $\nu(Y_n) \geq 1 - \epsilon$ by definition.
For the rest of this proof, let $y \in Y_n$ be fixed.
We seek to bound $|\C_y \cap F_n^2|/|F_n^2|$ from below by a quantity depending only on $\epsilon$.

Let $B_1, \dots, B_L$ be subsets of $X$ such that each $B_i$ has $d_{P,F_n}$-diameter at most $\epsilon$ and $\mu_y\left( \bigcup B_i \right) \geq 1 - \epsilon$.
Let $X' = \bigcup B_i$.
Without loss of generality, we may assume that the $B_i$ are disjoint.
For each $i$, fix a point $x_i \in B_i$.
Then, for $x \in X'$, define $r(x)$ to be the unique $x_i$ such that $x \in B_i$.

By construction, we know that for each $x \in X'$, $T^s x$ and $T^s r(x)$ lie in the same $P$-cell for most $s \in F_n$, but the set of good ``times'' $s$ changes as $x$ varies.
We now apply a form of Markov's inequality to upgrade this to the statement that for most $s \in F_n$, $\mu_y$-most $x$ satisfy $P(T^s x) = P(T^s r(x))$. 
Define
\[
A \ = \ \Big\{ s \in F_n : \mu_y \{x \in X' : P(T^s x) = P(T^s r(x)) \} \geq 1-\sqrt{2\epsilon} \Big\}.
\]
We have
\begin{align*}
    \sum_{s \in F_n} \mu_y\{x \in X' : P(T^s x) = P(T^s r(x)) \} \ &= \ \sum_{s \in F_n} \sum_{i=1}^{L} \mu_y\{x \in B_i : P(T^s x) = P(T^s x_i) \} \\
    &= \ \sum_{s \in F_n} \sum_{i=1}^{L} \int_{B_i} 1_{P(T^s x) = P(T^s x_i)} \,d\mu_y(x) \\
    &= \ \sum_{i=1}^{L} \int_{B_i} \sum_{s \in F_n} 1_{P(T^s x) = P(T^s x_i)} \,d\mu_y(x) \\
    &= \ \sum_{i=1}^{L} \int_{B_i} |F_n| (1 - d_{P,F_n}(x,x_i)) \,d\mu_y(x) \\
    &\geq \ \sum_{i=1}^{L} \mu_y(B_i) |F_n| (1-\epsilon) \\
    &\geq \ |F_n| (1-\epsilon)^2 \ \geq \ |F_n| (1-2\epsilon).
\end{align*}
But the original sum above also satisfies
\begin{align*}
\sum_{s \in F_n} \mu_y\{x \in X' : P(T^s x) = P(T^s r(x)) \} \ &= \ \sum_{s \in A} \mu_y\{x \in X' : P(T^s x) = P(T^s r(x)) \} \ + \\
& \quad \ \, \sum_{s \not\in A} \mu_y\{x \in X' : P(T^s x) = P(T^s r(x)) \} \\
&\leq \ |A| + (|F_n|-|A|)(1-\sqrt{2\epsilon}) \\
&= \ |F_n|(1-\sqrt{2\epsilon}) + |A|\cdot \sqrt{2\epsilon}.
\end{align*}
Combining these two inequalities shows that
\beq \label{eq: MostPointsAreCloseMostOfTheTime}
|A| \ \geq \ \frac{|F_n|(1-2\epsilon - (1-\sqrt{2\epsilon}))}{\sqrt{2\epsilon}} \ = \ |F_n|(1-\sqrt{2\epsilon}).
\eeq

The set $A$ decomposes as
\[
A \ = \ \bigcup_{w \in  \{0,1\}^L} \{s \in A : (P(T^s x_i))_{i=1}^L = w \}.
\]
By \eqref{eq: MostPointsAreCloseMostOfTheTime} and the pigeonhole principle, there is some $w \in \{0,1\}^L$ such that
\beq \label{equation: SizeOfEy}
\abs{ \{s \in A : (P(T^s x_i))_{i=1}^L = w \} } \ \geq \ |F_n|(1-\sqrt{2\epsilon}) \cdot 2^{-L}.
\eeq
Call this set $\mathcal{E}$. 
For $s,t \in \mathcal{E}$, 
say that $x$ is \emph{$(s,t)$-good} if $P(T^s x) = P(T^s r(x))$ and $P(T^t x) = P(T^t r(x))$.
By definition of $A$, the set of $x$ that are not $(s,t)$-good has $\mu_y$-measure at most $2\sqrt{2\epsilon}$.
Now, for 
$s,t \in \mathcal{E}$,
we can estimate
\begin{align*}
    \dist_y(T^{s^{-1}}P, T^{t^{-1}}P) \ &= \ \int 1_{P(T^s x) \neq P(T^t x)} \,d\mu_y(x) \ \leq \ \epsilon + \int_{X'} 1_{P(T^s x) \neq P(T^t x)} \,d\mu_y(x) \\
    &\leq \ \epsilon + 2\sqrt{2\epsilon} + \int_{ \{x \in X' :\ x \text{ is $(s,t)$-good}\} } 1_{P(T^s x) \neq P(T^t x)} \,d\mu_y(x) \\
    &= \ \epsilon + 2\sqrt{2\epsilon} + \sum_{i=1}^{L} \mu_y(B_i) 1_{P(T^s x_i) \neq P(T^t x_i)}.
\end{align*}
By definition of 
$\mathcal{E}$,
$P(T^s x_i) = w_i = P(T^t x_i)$ for all $i$, so this final sum vanishes and we conclude $\dist_y(T^{s^{-1}}P, T^{t^{-1}}P) \leq \epsilon + 2\sqrt{2\epsilon} \leq 5\sqrt{\epsilon}$ whenever $s,t \in \mathcal{E}$.

Finally, observe that $\C_y$ contains $\mathcal{E} \times \mathcal{E}$.
Therefore, by \eqref{equation: SizeOfEy}, we have
\[
    \frac{|\C_y \cap F_n^2|}{|F_n^2|} \ \geq \ \left((1-2\sqrt{\epsilon}) \cdot 2^{-L(\epsilon)} \right)^2 \ > \ 0    
\]
as claimed.
\end{proof}

This finishes the first half of our outline.
For convenience, we now introduce some new definitions before starting the second half.

\begin{definition}
Given $y \in Y$ and two sets $A, B \susbeteq X$, we define the \textbf{dependence score} with respect to $\mu_y$ to be
\[
\mathscr{D}_y(A,B) \ := \ \abs{ \mu_y(A \cap B) - \mu_y(A)\mu_y(B)}.
\]
We also define the \textbf{averaged dependence score}
\[
    \mathscr{D}(A,B) \ = \ \int \mathscr{D}_y(A,B) \,d\nu(y).
\]
Finally, if $Q$ and $Q'$ are two finite partitions of $X$, then the averaged dependence score between $Q$ and $Q'$ is defined to be
\[
    \mathscr{D}(Q,Q') \ = \ \max_{i,j} \mathscr{D}(Q_i, Q'_j).
\]
\end{definition}

\begin{lemma} \label{lemma: TranslatedFolnerSequence}
    Let $(F_n)$ be a F\o lner sequence for $G$ and let $(g_n)$ be an arbitrary sequence of elements of $G$.  Then $(F_n g_n)$ is also a F\o lner sequence for $G$.
\end{lemma}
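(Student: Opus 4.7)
The plan is to verify the left Følner condition for $(F_n g_n)$ directly from the corresponding condition for $(F_n)$, exploiting the fact that the defining ratio is invariant under right translation. First I would fix an arbitrary $g \in G$ and rewrite the numerator using the identity
\[
g F_n g_n \cap F_n g_n \ = \ (g F_n \cap F_n) g_n,
\]
which holds because right multiplication by $g_n$ is a bijection of $G$. This same bijectivity gives $|F_n g_n| = |F_n|$ and $|(g F_n \cap F_n) g_n| = |g F_n \cap F_n|$.

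Consequently,
\[
\frac{|g F_n g_n \cap F_n g_n|}{|F_n g_n|} \ = \ \frac{|g F_n \cap F_n|}{|F_n|},
\]
and the right-hand side tends to $1$ as $n \to \infty$ since $(F_n)$ is a left Følner sequence. Since $g \in G$ was arbitrary, this shows $(F_n g_n)$ is a left Følner sequence as well.

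There is no real obstacle here; the entire argument is just the observation that the left Følner condition is invariant under right translation of each term in the sequence, combined with the fact that right multiplication preserves cardinalities of finite subsets of $G$.
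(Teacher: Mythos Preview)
Your proof is correct and essentially identical to the paper's own proof: both verify the left F\o lner condition directly by using the identity $g F_n g_n \cap F_n g_n = (g F_n \cap F_n) g_n$ and the fact that right multiplication preserves cardinalities to reduce the ratio for $(F_n g_n)$ to that for $(F_n)$.
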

\begin{proof}
    For any $h \in G$, we have 
    \[
        \frac{|h F_n g_n \cap F_n g_n|}{|F_n g_n|} \ = \ \frac{|(hF_n \cap F_n) g_n|}{|F_n g_n|} \ = \ \frac{|hF_n \cap F_n|}{|F_n|} \ \to \ 1  
    \]
    as $n \to \infty$.
\end{proof}

\begin{lemma} \label{lemma: DiagonalSetDensity}
    Let $\Gamma \subseteq G$ be a subset of absolute density $1$.
    Define $\Gamma' = \{(s,t) \in G^2 : ts^{-1} \in \Gamma \}$.
    Then if $(F_n)$ is any F\o lner sequence for $G$, we have 
    \[
    \lim_{n \to \infty} \frac{|\Gamma' \cap F_n^2|}{|F_n^2|} \ = \ 1.
    \]
\end{lemma}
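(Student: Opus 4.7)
The plan is to rewrite $|\Gamma' \cap F_n^2|$ via a change of variables, reduce the desired limit to an average of densities along right-translated copies of $F_n$, and then derive a contradiction from the absolute density hypothesis using \Cref{lemma: TranslatedFolnerSequence}.

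First, I would fix $s \in F_n$ and count $\#\{t \in F_n : ts^{-1} \in \Gamma\}$. Substituting $u = ts^{-1}$ (a bijection of $G$) turns this into $|F_n s^{-1} \cap \Gamma|$, so summing over $s \in F_n$ gives
\[
|\Gamma' \cap F_n^2| \ = \ \sum_{s \in F_n} |F_n s^{-1} \cap \Gamma|.
\]
Dividing by $|F_n^2|$ and using $|F_n s^{-1}| = |F_n|$, the desired conclusion is equivalent to
\[
\frac{1}{|F_n|} \sum_{s \in F_n} \frac{|F_n s^{-1} \cap \Gamma|}{|F_n s^{-1}|} \ \longrightarrow \ 1 \quad \text{as } n \to \infty.
\]

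Next, I would argue by contradiction. If this limit failed, there would be $\delta > 0$ and a subsequence $(n_k)$ along which the average above is $\leq 1 - \delta$. Since every summand lies in $[0,1]$, pigeonhole forces, for each $k$, some $s_k \in F_{n_k}$ with
\[
\frac{|F_{n_k} s_k^{-1} \cap \Gamma|}{|F_{n_k}|} \ \leq \ 1 - \delta.
\]

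Finally, I would extend $(s_k)$ to a full sequence $(g_n)_{n \geq 1}$ by setting $g_{n_k} = s_k^{-1}$ and $g_n$ arbitrary otherwise. By \Cref{lemma: TranslatedFolnerSequence}, $(F_n g_n)$ is itself a F\o lner sequence, and then so is its subsequence $(F_{n_k} s_k^{-1})$ (the F\o lner property is inherited by subsequences). The absolute density $1$ hypothesis, applied to this F\o lner sequence, then forces $|F_{n_k} s_k^{-1} \cap \Gamma|/|F_{n_k} s_k^{-1}| \to 1$, contradicting the uniform bound $\leq 1 - \delta$ obtained above. I do not expect any genuine obstacle here: the only real subtlety is the pigeonhole extraction, which is set up precisely so that \Cref{lemma: TranslatedFolnerSequence} can be brought to bear.
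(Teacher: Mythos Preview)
Your proof is correct and follows essentially the same approach as the paper: the same change-of-variables identity $|\Gamma' \cap F_n^2| = \sum_{s \in F_n} |F_n s^{-1} \cap \Gamma|$, followed by an appeal to \Cref{lemma: TranslatedFolnerSequence} and the absolute-density hypothesis. The only cosmetic difference is that the paper bounds the sum below directly by $|F_n|\cdot|F_n s_n^{-1}\cap\Gamma|$ with $s_n$ the minimizer, rather than extracting a bad $s_k$ along a subsequence by contradiction.
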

\begin{proof}
Let $(F_n)$ be a left F\o lner sequence for $G$.
We calculate
\begin{align*}
    |\Gamma' \cap F_n^2| \ &= \ \#\{(s,t) \in F_n^2 : ts^{-1} \in \Gamma \} \\
    &= \ \sum_{s \in F_n} \#\{ t \in F_n : ts^{-1} \in \Gamma \} \\
    &= \ \sum_{s \in F_n} |F_n \cap \Gamma s| \\
    &= \ \sum_{s \in F_n} |F_n s^{-1} \cap \Gamma | \\
    &\geq \ |F_n| \cdot |F_n s_n^{-1} \cap \Gamma |,
\end{align*}
where $s_n \in F_n$ is defined to be the element of $F_n$ that minimizes $|F_n s^{-1} \cap \Gamma|$ over all $s \in F_n$.
By \Cref{lemma: TranslatedFolnerSequence}, $(F_n s_n^{-1})$ is also a F\o lner sequence, so because $\Gamma$ has absolute density $1$ we get
\begin{align*}
\lim_{n \to \infty} \frac{|\Gamma' \cap F_n^2|}{|F_n^2|} \ &\geq \ \lim_{n \to \infty} \frac{|F_n| \cdot |F_n s_n^{-1} \cap \Gamma |}{|F_n|^2} \\
&= \ \lim_{n \to \infty} \frac{|F_n s_n^{-1} \cap \Gamma |}{|F_n|} \ = \ \lim_{n \to \infty} \frac{|F_n s_n^{-1} \cap \Gamma |}{|F_n s_n^{-1}|} \ = \ 1.
\end{align*}
\end{proof}

Now fix another parameter $0 < \eta \ll \epsilon$ which is small enough so that $3\eta^{1/4} < c(\epsilon)/2$, where $c(\epsilon)$ is the quantity from \Cref{lemma: CloseTogetherPartition}.

\begin{lemma} \label{lemma: IndependentPartition}
For $y \in Y$, define
\[
\mathcal{I}_y \ := \ \left\{ (s,t) \in G^2 : \mathscr{D}_y(T^{s^{-1}}P, T^{t^{-1}}P) \leq \sqrt{\eta} \right\}.
\]
Then, for all sufficiently large $n$, there is a set $Y_n^\dagger \subseteq Y$ such that $\nu(Y_n^\dagger) \geq 1 - 3\eta^{1/4}$ and 
\[
\frac{|\mathcal{I}_y \cap F_n^2|}{|F_n^2|} \ \geq \ 1-3\eta^{1/4}
\]
for all $y \in Y_n^\dagger$.
\end{lemma}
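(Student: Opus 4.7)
The plan is to exploit the fact that $P$ is independent of $\mathbf{Z}$ (the maximal intermediate isometric extension provided by \Cref{theorem: RelativeKroneckerProperties}) by producing conditionally weakly mixing functions out of the cells of $P$. Define $c_i(y) := \mu_y(P_i)$ and $f_i := 1_{P_i} - c_i \circ \pi$ for $i = 0, 1$. Since $P$ is independent of $\mathbf{Z}$ conditional on $\Y$, for any $\mathbf{Z}$-measurable $h \in L^2(\X \,|\, \pi)$ one has
\[
\langle f_i, h \rangle_y \ = \ \int 1_{P_i} \bar h \,d\mu_y - c_i(y) \int \bar h \,d\mu_y \ = \ 0
\]
for $\nu$-a.e.\ $y$, so \Cref{theorem: RelativeKroneckerProperties} places $f_i \in W(\X \,|\, \pi)$.

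Next I relate the conditional inner products to the dependence scores. A direct expansion using the identity $\mu_y(T^{s^{-1}} A) = \mu_{S^s y}(A)$ (equivariance of the disintegration) gives
\[
\langle T^s f_i, T^t f_j \rangle_y \ = \ \mu_y(T^{s^{-1}} P_i \cap T^{t^{-1}} P_j) - \mu_y(T^{s^{-1}} P_i) \mu_y(T^{t^{-1}} P_j),
\]
so $|\langle T^s f_i, T^t f_j \rangle_y| = \mathscr{D}_y(T^{s^{-1}} P_i, T^{t^{-1}} P_j)$. The change of variables $x \mapsto T^{-t} x$, combined with the $S$-invariance of $\nu$, collapses the two-parameter quantity to a one-parameter one:
\[
\int \left| \langle T^s f_i, T^t f_j \rangle_y \right| d\nu(y) \ = \ \int \left| \langle T^{s t^{-1}} f_i, f_j \rangle_y \right| d\nu(y).
\]
Setting $\delta := \eta^{3/2}$ and applying the definition of $W(\X \,|\, \pi)$ to each of the four pairs $(i,j) \in \{0,1\}^2$ produces a set
\[
\Gamma \ := \ \bigcap_{i,j} \left\{ r \in G : \int \left| \langle T^r f_i, f_j \rangle_y \right| d\nu < \delta \right\}
\]
of absolute density $1$ in $G$; a further change of variables ($|\langle T^{r^{-1}} f_i, f_j\rangle_y|$ and $|\langle T^r f_j, f_i\rangle_y|$ have the same $\nu$-average) makes $\Gamma$ symmetric, so membership in $\Gamma$ depends only on $st^{-1}$ up to inversion.

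By \Cref{lemma: DiagonalSetDensity}, the set $\Gamma' := \{(s,t) \in G^2 : s t^{-1} \in \Gamma\}$ has density tending to $1$ in $F_n^2$; fix $n$ large enough that $|F_n^2 \setminus \Gamma'| / |F_n^2| \leq \eta^{1/2}$. For $(s,t) \in \Gamma'$, taking the max over cells gives $\mathscr{D}(T^{s^{-1}} P, T^{t^{-1}} P) \leq \delta$, and a union bound combined with Markov's inequality in the fiber direction produces
\[
\nu\bigl\{ y : \mathscr{D}_y(T^{s^{-1}} P, T^{t^{-1}} P) > \sqrt{\eta} \bigr\} \ \leq \ 4 \delta / \sqrt{\eta} \ = \ 4\eta.
\]
Integrating the above over $F_n^2$ and splitting $\Gamma'$ from its complement yields
\[
\int \left( 1 - \frac{|\mathcal{I}_y \cap F_n^2|}{|F_n^2|} \right) d\nu(y) \ \leq \ \eta^{1/2} \cdot 1 + 1 \cdot 4\eta \ \leq \ 2\eta^{1/2},
\]
and one final Markov inequality gives $\nu\{ y : 1 - |\mathcal{I}_y \cap F_n^2|/|F_n^2| > 3\eta^{1/4} \} \leq (2/3)\eta^{1/4} < 3\eta^{1/4}$, producing the required set $Y_n^\dagger$. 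The main obstacle is the first step: verifying that the informal assertion ``$P$ is independent of $\mathbf{Z}$'' is exactly the conditional orthogonality condition needed to invoke \Cref{theorem: RelativeKroneckerProperties}, which is what transports us from the concrete combinatorics of $P$ into the realm of abstract ergodic decomposition over the relative Kronecker factor.
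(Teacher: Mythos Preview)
Your argument is correct and follows essentially the same route as the paper: centre the indicators of $P$, use property~(1) together with \Cref{theorem: RelativeKroneckerProperties} to place the $f_i$ in $W(\X\,|\,\pi)$, identify $|\langle T^s f_i, T^t f_j\rangle_y|$ with $\mathscr{D}_y(T^{s^{-1}}P_i,T^{t^{-1}}P_j)$, reduce to a density-$1$ set $\Gamma\subseteq G$, lift to $F_n^2$ via \Cref{lemma: DiagonalSetDensity}, and conclude with two applications of Markov's inequality. The only cosmetic differences are that the paper centres by the global mean $\mu(P_i)$ (equal to your $c_i(y)$ since $P$ is independent of $\Y$), uses threshold $\eta$ rather than $\eta^{3/2}$, and writes $\Gamma'=\{(s,t):ts^{-1}\in\Gamma\}$ directly rather than symmetrising $\Gamma$; none of this affects the logic.
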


\begin{proof}
Property (1) in the definition of the partition $P$ implies that if $f$ is any $P$-measurable function satisfying $\int f \,d\mu_y = 0$ for $\nu$-a.e. $y$, and $h$ is any 
$\mathbf{Z}$-measurable 
function, then also $\ab{f,h}_y = 0$ for $\nu$-a.e. $y$.
By the second bullet point of \Cref{theorem: RelativeKroneckerProperties}, this implies that any such $f$ is conditionally weak mixing given 
$\pi$.

Let $f_0 = 1_{P_0} - \mu(P_0)$ and $f_1 = 1_{P_1} - \mu(P_1)$.
Clearly these are both $P$-measurable, and because $P$ is independent of $\mathbf{Z}$ and therefore also independent of
$\Y$,
we also have $\int f_0 \,d\mu_y = \int f_1 \,d\mu_y = 0$ for $\nu$-a.e. $y$.
Therefore $f_0$ and $f_1$ are both conditionally weakly mixing.
Observe that
\begin{align*}
    \abs{ \ab{f_0,T^s f_1}_y } \ &= \ \abs{ \int f_0 \cdot T^s f_1 \,d\mu_y } \ = \ \abs{ \int (1_{P_0} - \mu(P_0))(1_{T^{s^{-1}}P_1} - \mu(P_1)) \,d\mu_y } \\
    &= \ \abs{ \mu_y(P_0 \cap T^{s^{-1}}P_1) - \mu(P_0)\mu_y(T^{s^{-1}}P_1) - \mu(P_1)\mu_y(P_0) + \mu(P_0)\mu(P_1) } \\
    &= \ \abs{ \mu_y(P_0 \cap T^{s^{-1}}P_1) - \mu_y(P_0)\mu_y(T^{s^{-1}}P_1) } \\
    &= \ \mathscr{D}_y(P_0, T^{s^{-1}}P_1).
\end{align*}
In the second to last line we again used the fact that $P$ is independent of 
$\Y$.
So by the discussion in \Cref{definition: ConditionalWeakMixing}, the set 
\[
\Gamma_{0,1} \ := \ \left\{ s \in G : \mathscr{D}(P_0, T^{s^{-1}}P_1) < \eta \right\}
\]
has absolute density $1$.

Applying the same analysis to $\ab{f_0,T^s f_0}_y$, $\ab{f_1,T^s f_0}_y$, and $\ab{f_1,T^s f_1}_y$ gives the same conclusion for each of the sets
\[
\Gamma_{i,j} \ := \ \left\{ s \in G : \mathscr{D}(P_i, T^{s^{-1}}P_j) < \eta \right\}.
\]
It follows that the set
\[
\Gamma \ := \ \bigcap_{0 \leq i,j \leq 1} \Gamma_{i,j} \ = \ \left\{ s \in G : \mathscr{D}(P, T^{s^{-1}}P) < \eta \right\}
\]
also has absolute density $1$.

As in \Cref{lemma: DiagonalSetDensity}, we now define the set of pairs
\[
\Gamma' \ := \ \left\{ (s,t) \in G^2 : \mathscr{D}(T^{s^{-1}}P, T^{t^{-1}}P) < \eta \right\} \ = \ \left\{ (s,t) \in G^2 : ts^{-1} \in \Gamma \right\}.
\]

Fix any $(s,t) \in \Gamma'$.
For each $i,j$, Markov's inequality implies that there is a subset of $Y$ of measure at least $1-\sqrt{\eta}$ on which $\mathscr{D}_y(T^{s^{-1}}P_i, T^{t^{-1}}P_j) < \sqrt{\eta}$.
Let $Y_{s,t}$ be the intersection of those sets over $0 \leq i,j \leq 1$; then we have $\nu(Y_{s,t}) \geq 1-4\sqrt{\eta}$ and $\mathscr{D}_y(T^{s^{-1}}P, T^{t^{-1}}P) < \sqrt{\eta}$ for all $y \in Y_{s,t}$.

Finally, we estimate the size of $\mathcal{I}_y$ for most $y$ (recall the statement of \Cref{lemma: IndependentPartition} for the definition of $\mathcal{I}_y$).
We have
\begin{align*}
    \int \abs{ \mathcal{I}_y \cap F_n^2 } \,d\nu(y) \ &= \ \int \sum_{(s,t) \in F_n^2} 1_{\mathscr{D}_y(T^{s^{-1}}P, T^{t^{-1}}P) \leq \sqrt{\eta}} \quad d\nu(y) \\
    &= \ \sum_{(s,t) \in F_n^2} \nu \left\{ y \in Y : \mathscr{D}_y(T^{s^{-1}}P, T^{t^{-1}}P) \leq \sqrt{\eta} \right\} \\
    &\geq \ \sum_{(s,t) \in \Gamma' \cap F_n^2} \nu(Y_{s,t}) \\
    &\geq |\Gamma' \cap F_n^2| \cdot (1-4\sqrt{\eta}).
\end{align*}
By \Cref{lemma: DiagonalSetDensity}, $\Gamma'$ has density $1$ with respect to $(F_n^2)$, so for $n$ sufficiently large we have
\[
\int \frac{\abs{ \mathcal{I}_y \cap F_n^2 }}{\abs{F_n^2}} \,d\nu(y) \ \geq \ \frac{|\Gamma' \cap F_n^2| \cdot (1-4\sqrt\eta)}{|F_n^2|} \ \geq \ 1-5\sqrt{\eta}.
\]
It follows by Markov's inequality that there is a set $Y_n^\dagger \subseteq Y$ with $\nu(Y_n^\dagger) \geq 1-\sqrt{5\sqrt{\eta}} \geq 1 - 3\eta^{1/4}$ such that
\[
\frac{\abs{\mathcal{I}_y \cap F_n^2}}{\abs{F_n^2}} \ \geq \ 1 - \sqrt{5 \sqrt{\eta}} \ \geq \ 1 - 3 \eta^{1/4}
\]
for all $y \in Y_n^\dagger$, as claimed.
\end{proof}

\begin{proof}[Proof of \Cref{proposition: BoundedImpliesIsometric}]
We show that there exists $y \in Y$ such that $\mathcal{C}_y \cap \mathcal{I}_y \neq \emptyset$.
This is sufficient because $(s,t) \in \mathcal{C}_y$ implies that $\dist_y(T^{s^{-1}}P, T^{t^{-1}}P) \leq \epsilon$, while $(s,t) \in \mathcal{I}_y$ implies that $\mathscr{D}_y(T^{s^{-1}}P, T^{t^{-1}}P) \leq \sqrt{\eta}$.
But because $\mu_y(P_0), \mu_y(P_1) \geq 1/3$ for all $y$ and $\eta < \epsilon < 10^{-6}$, these two conditions contradict each other.
Indeed, $\dist_y(T^{s^{-1}}P, T^{t^{-1}}P) \leq \epsilon$ implies in particular that 
\beq \label{equation: PartitionDistanceConsequence}
    \mu_y(T^{s^{-1}}P_0 \cap T^{t^{-1}}P_1) < \epsilon.
\eeq
But the dependence score condition implies that 
\[
    \mu_y(T^{s^{-1}}P_0 \cap T^{t^{-1}}P_1) \ > \ \mu_y(T^{s^{-1}}P_0) \mu_y(T^{t^{-1}}P_1) - \sqrt{\eta} \ > \ \frac19 - \sqrt{\eta},
\]
which contradicts \eqref{equation: PartitionDistanceConsequence}.

To find such a $y$, first choose $n$ large enough to satisfy the hypothesis of \Cref{lemma: IndependentPartition}.
Then, let $Y_n$ be the set guaranteed by \Cref{lemma: CloseTogetherPartition} and $Y_n^\dagger$ be the set guaranteed by \Cref{lemma: IndependentPartition}.
We have chosen $\epsilon$ and $\eta$ small enough to ensure $\nu(Y_n \cap Y_n^\dagger) > 0$, so choose $y \in Y_n \cap Y_n^\dagger$.
Then $|\mathcal{C}_y \cap F_n^2| \geq c(\epsilon) \cdot |F_n^2|$ and $|\mathcal{I}_y \cap F_n^2| \geq (1-3\eta^{1/4}) \cdot |F_n^2|$, so by our choice of $\eta$, we are guaranteed that $\mathcal{C}_y \cap \mathcal{I}_y \neq \emptyset$.
\end{proof}

As a corollary, we also get a characterization of weakly mixing extensions in terms of relative slow entropy.
Recall that $\pi: \X \to \Y$ is said to be \textbf{weakly mixing} if there are no intermediate isometric extensions except for the trivial one $\Y \to \Y$.

\begin{corollary} \label{corollary: WeakMixingCriterion}
    Suppose $\X$ is ergodic.
    Then $\pi: \X \to \Y$ is weakly mixing if and only if for every partition $P$ which is not 
    $\Y$-measurable,
    there exists a rate function $U$ and a F\o lner sequence $(F_n)$ such that 
    $\h^{U, (F_n)}(\X, P \,|\, \pi) > 0$.
\end{corollary}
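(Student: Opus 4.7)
The plan is to prove each implication by reducing to the characterization of isometric extensions in Theorem \ref{theorem: IsometricEquivBounded}.

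For the ``only if'' direction I would argue the contrapositive: if $\pi$ fails to be weakly mixing, then there is a nontrivial intermediate isometric extension $\X \xrightarrow{\phi} \mathbf{Z} \xrightarrow{\bar\pi} \Y$, and I would use it to produce a non-$\Y$-measurable partition $P$ with $\h^{U, (F_n)}(\X, P \,|\, \pi) = 0$ for every rate function $U$ and every F\o lner sequence $(F_n)$. Choose any partition $\tilde P$ of $Z$ that is not $\Y$-measurable (available since $\mathbf{Z} \neq \Y$) and set $P = \phi^{-1} \tilde P$. Because $P$ is pulled back from $\mathbf{Z}$, the pseudometric $d_{P,F_n}$ on $X$ factors through $\phi$, and Lemma \ref{lemma: PushForwardConditionalMeasures} identifies $\phi_* \mu_y$ with $\sigma_y$, the disintegration of the measure on $\mathbf{Z}$ over $\bar\pi$. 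Pulling back an $\epsilon$-cover of $\sigma_y$ through $\phi^{-1}$ yields an $\epsilon$-cover of $\mu_y$ with the same number of sets and the same Hamming diameters, giving
\[
    \cov(\mu, P, F_n, \epsilon \,|\, \pi) \ \leq \ \cov(\sigma, \tilde P, F_n, \epsilon \,|\, \bar\pi)
\]
and hence $\h^{U, (F_n)}(\X, P \,|\, \pi) \leq \h^{U, (F_n)}(\mathbf{Z}, \tilde P \,|\, \bar\pi)$. Since $\bar\pi$ is isometric, Theorem \ref{theorem: IsometricEquivBounded} forces the right-hand side to vanish for every $U$ and $(F_n)$.

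For the ``if'' direction, assume $\pi$ is weakly mixing and let $P$ be a partition of $X$ that is not $\Y$-measurable. Pass to the intermediate factor $\X'$ of $\X$ associated to the $G$-invariant sub-$\sigma$-algebra $P^G \vee \pi^{-1} \mathcal{B}_Y$, with factor maps $\psi: \X \to \X'$ and $\pi': \X' \to \Y$ satisfying $\pi = \pi' \circ \psi$. Since $P$ is not $\Y$-measurable, $\pi'$ is a proper extension, and $\pi'$ inherits weak mixing from $\pi$ because any intermediate isometric extension between $\X'$ and $\Y$ would also be intermediate between $\X$ and $\Y$. In particular $\pi'$ is not isometric, so by Theorem \ref{theorem: IsometricEquivBounded} there exist $U$ and $(F_n)$ with $\h^{U, (F_n)}(\X' \,|\, \pi') > 0$. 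By construction $P$ (viewed on $X'$) is a single generating partition for $\X'$ relative to $\pi'$, so applying Theorem \ref{theorem: GeneratingPartitionsDominate} to the constant sequence $P_m = P$ gives $\h^{U, (F_n)}(\X', P \,|\, \pi') = \h^{U, (F_n)}(\X' \,|\, \pi') > 0$. Finally, Theorem \ref{theorem: MonotoneUnderExtensions}, applied to the upward extension $\psi: \X \to \X'$ over $\id_\Y$, delivers $\h^{U, (F_n)}(\X, P \,|\, \pi) \geq \h^{U, (F_n)}(\X', P \,|\, \pi') > 0$, as required.

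The main subtlety I anticipate is that Theorem \ref{theorem: MonotoneUnderExtensions} gives the opposite inequality from the one needed in the first half, so it cannot be cited directly to compare $\h^{U, (F_n)}(\X, P \,|\, \pi)$ with $\h^{U, (F_n)}(\mathbf{Z}, \tilde P \,|\, \bar\pi)$. The desired inequality must be supplied by the direct pull-back argument sketched above, whose correctness rests on Lemma \ref{lemma: PushForwardConditionalMeasures} to match measures of $\phi$-saturated subsets of $X$ with their images in $\mathbf{Z}$.
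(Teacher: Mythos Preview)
Your proof is correct and follows essentially the same approach as the paper: both directions reduce to Theorem~\ref{theorem: IsometricEquivBounded} via the intermediate factor generated by $P^G \vee \pi^{-1}\mathcal{B}_Y$, together with Theorem~\ref{theorem: GeneratingPartitionsDominate}. The only cosmetic differences are that the paper argues the ``if'' direction by contrapositive rather than directly, and in the ``only if'' direction the paper simply asserts that $\h^{U,(F_n)}(\X,P\,|\,\pi)=0$ for $\mathbf{Z}$-measurable $P$ without spelling out the pull-back of covers that you (correctly) supply.
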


\begin{proof}
    First suppose $\pi$ is not weakly mixing.
    Then there is 
    a nontrivial isometric extension $\mathbf{Z} \to \Y$.
    Then if $P$ is any 
    $\mathbf{Z}$-measurable
    partition, \Cref{theorem: IsometricEquivBounded} implies that 
    $\h^{U, (F_n)}(\X, P \,|\, \pi) = 0$
    for every rate function $U$ and every F\o lner sequence $(F_n)$.
    Because 
    $\mathbf{Z}$ strictly extends $\Y$,
    we can choose this $P$ to not be 
    $\Y$-measurable.

    Conversely, suppose there is a partition $P$, not measurable with respect to 
    $\Y$,
    satisfying 
    \[
    \h^{U, (F_n)}(\X, P \,|\, \pi) \ = \ 0
    \]
    for every $U$ and every $(F_n)$.
    Then the $T$-invariant $\sigma$-algebra $\pi^{-1} \mathcal{B}_Y \vee \bigvee_{s \in G} T^{s^{-1}}P$ corresponds to an intermediate extension 
    $\mathbf{Z} \to \Y$.
    Because $P$ is not 
    $\Y$-measurable,
    this is a nontrivial extension. 
    So because $P$ is relatively generating for $\mathbf{Z}$ with respect to $\Y$, this implies that 
    $\h^{U, (F_n)}(\mathbf{Z} \,|\, \pi) = 0$
    for all $U$ and $(F_n)$.
    Therefore \Cref{theorem: IsometricEquivBounded} implies that $\mathbf{Z} \to \Y$ is isometric, so $\pi$ is not weakly mixing. 
\end{proof}

\section{Rigid extensions} 
\label{sec: Rigidity}

\subsection{Definitions}

Let $G = \Z$.
Recall that a system 
$\X$
is said to be {\bf rigid} if there exists a sequence $0 = n_0 < n_1 < n_2 < \dots$ such that
\[
\lim_{k \to \infty} \mu(T^{-n_k}A \ \triangle\ A) \ = \ 0
\]
for all measurable $A \subseteq X$.

While there have been some attempts to relativize this notion and define what it means for an extension $\pi: \X \to \Y$ to be rigid (see \cite[Definition 4]{schnurr2018rigid}), thus far no definition has been completely satisfactory.
In this section we will give a new definition of rigid extension and demonstrate some of its properties.

\begin{definition} \label{definition: AutomorphismTopology}
    Let $\aut(I,m)$ denote the space of Lebesgue measure-preserving automorphisms of the unit interval $I$, modulo the equivalence relation of $m$-a.e. agreement.
    This space is a Polish topological group when endowed with the weak topology defined by the property that a sequence $(\phi_n)$ converges to $\phi$ if and only if $m(\phi_n^{-1}E \,\triangle\, \phi^{-1}E) \to 0$ for all measurable $E \subseteq I$.
    We can define a metric that generates this topology as follows.
    For $k \geq 1$, let $\D_k$ be the partition of $I$ into intervals of length $2^{-k}$.
    Then, for $\phi$, $\psi \in \aut(I,m)$, define
    \[
    d_A(\phi, \psi) \ = \ \sum_{k \geq 1} 2^{-k} \dist_m(\phi^{-1}\D_k, \psi^{-1} \D_k).
    \]
    Note in particular that a sequence $(\phi_n)$ converges to $\id \in \aut(I,m)$ if and only if 
    \[ 
    \lim_{n \to \infty} \dist_m \left( \D_j , \phi_n^{-1} \D_j \right) \ = \ 0
    \]
    for any fixed $j$.
\end{definition}

In this section, we use a different definition for the word ``cocycle'' -- we say that a \textbf{cocycle} on $Y$ is a measurable map $\a: Y \to \aut(I,m)$.
Recall that a cocycle $\a$ on $Y$ induces the skew product system
\[
\X_\a \ = \ (Y \times I, \nu \times m, T_\a)
\]
where $T_\a(y,t) := (Sy, \a(y) t)$.
For $n \in \N$, define 
\[
\a_n(y) \ := \ \a(S^{n-1}y) \circ \dots \circ \a(Sy) \circ \a(y),
\]
so that $T_\a^n(y,t) = (S^n y, \a_n(y)t)$.

\begin{definition} 
\label{definition: RigidExtension}
    We say that $\X_\a$ is a \textbf{rigid extension} of $\Y$ if for $\nu$-a.e. $y \in Y$, there is a subsequence $(n_k)$ such that $\a_{n_k}(y) \to \id$ as $k \to \infty$.
    Such a sequence $(n_k)$ is called a \textbf{rigidity sequence} for $y$.
    We will also use the terminology that $\a$ is a \textbf{rigid cocycle}.
\end{definition}

\begin{remark}
    There are a few things to note about this definition.
    \begin{enumerate}
        \item The rigidity sequence $(n_k)$ is allowed to depend on the base point $y$.
        This is the main difference between our definition and previous definitions and it is crucial to everything we are able to prove about rigid extensions.

        \item If $\Y$ is trivial, this definition reduces to the usual definition of a rigid system.
        
        \item By Rokhlin's skew product theorem, any infinite-to-one ergodic extension of $\Y$ is isomorphic to a skew product of the above form.
        However, we are not able to show that this definition of rigid extension is isomorphism invariant, so we must assume for now that $\X$ is literally a skew product over $\Y$.
    \end{enumerate}
\end{remark}

For future convenience, we record here a simple condition that implies the rigidity of a cocycle $\a$.

\begin{lemma}
\label{lemma: RigidityDiagonalization}
    Let $\a$ be a cocycle on $Y$.
    Let $\D_k$ be the depth-$k$ dyadic partition of $I$ and let $P_k = \{ Y \times E : E \in \D_k \}$.
    In order to show that $\a$ is a rigid cocycle, it is sufficient to show that for every $k \geq 1$ and every $\epsilon > 0$, we have
    \[
        \nu \left\{ y \in Y : \dist_y (P_k, T_\a^{-n}P_k) < \epsilon \text{ for infinitely many $n$} \right\} \ = \ 1.
    \]   
\end{lemma}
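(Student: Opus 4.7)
The plan is to translate the hypothesis, which is phrased in terms of the partitions $P_k$ and the system $\X_\a$, into a statement about how $\a_n(y)$ acts on the dyadic partitions $\D_k$ of $I$, and then to extract a convergent subsequence by diagonalization.

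First, I would compute what $T_\a^{-n} P_k$ looks like when restricted to the fiber over $y$. Since $T_\a^n(y,t) = (S^n y, \a_n(y) t)$, the set $T_\a^{-n}(Y \times E)$ intersected with $\{y\} \times I$ is precisely $\{y\} \times \a_n(y)^{-1} E$. Because $\mu_y$ is just Lebesgue measure on the fiber, this gives the identity
\[
\dist_y(P_k, T_\a^{-n} P_k) \ = \ \dist_m(\D_k, \a_n(y)^{-1} \D_k).
\]
So the hypothesis says that for every $k \geq 1$ and every $\epsilon > 0$, the set
\[
A_{k,\epsilon} \ := \ \{ y \in Y : \dist_m(\D_k, \a_n(y)^{-1}\D_k) < \epsilon \text{ for infinitely many } n \}
\]
has full $\nu$-measure. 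Let $A := \bigcap_{k,j \geq 1} A_{k, 1/j}$; this is still a full measure set.

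Next I would use the fact that when $k \geq k'$, the partition $\D_k$ refines $\D_{k'}$, which in turn implies the monotonicity
\[
\dist_m(\phi^{-1} \D_{k'}, \D_{k'}) \ \leq \ \dist_m(\phi^{-1} \D_k, \D_k)
\]
for any $\phi \in \aut(I,m)$. (If two refinements agree on a set, the coarsenings agree on at least that set.) This is the key observation that makes the diagonalization work cleanly: controlling the discrepancy for one large $k$ controls it for all smaller $k$ simultaneously.

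Now fix $y \in A$. I would construct the rigidity sequence greedily: having chosen $n_1 < n_2 < \dots < n_{j-1}$, use the fact that $y \in A_{j, 1/j}$ to pick $n_j > n_{j-1}$ with $\dist_m(\D_j, \a_{n_j}(y)^{-1}\D_j) < 1/j$. Then for any fixed $k \geq 1$ and all $j \geq k$, the monotonicity above gives
\[
\dist_m(\D_k, \a_{n_j}(y)^{-1} \D_k) \ \leq \ \dist_m(\D_j, \a_{n_j}(y)^{-1} \D_j) \ < \ 1/j \ \longrightarrow \ 0.
\]
By the characterization of convergence in the weak topology recalled at the end of \Cref{definition: AutomorphismTopology}, this says exactly that $\a_{n_j}(y) \to \id$ in $\aut(I, m)$, so $(n_j)$ is a rigidity sequence for $y$. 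Since this works for every $y \in A$ and $\nu(A) = 1$, the cocycle $\a$ is rigid. No step here is really an obstacle; the only subtlety worth stressing is the identification of the fiberwise action in the first step and the refinement monotonicity that permits a single-sequence diagonalization rather than requiring a doubly-indexed extraction.
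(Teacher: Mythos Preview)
Your proof is correct and follows essentially the same approach as the paper: both arguments identify $\dist_y(P_k, T_\a^{-n}P_k)$ with $\dist_m(\D_k, \a_n(y)^{-1}\D_k)$, use the refinement monotonicity of the dyadic partitions, and extract a single rigidity sequence by diagonalization. Your greedy choice of $n_j$ is a slightly cleaner packaging of the paper's doubly-indexed sequence $(n_{k,k})$, but the content is the same.
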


\begin{proof}
    \renewcommand{\R}{\mathcal{R}}
    For each $k$, define
    \begin{align*}
        \R_k \ &:= \ \left\{ y \in Y : \dist_y(P_k, T_\a^{-n}P_k) < 1/k \text{ for infinitely many $n$} \right\} \quad \text{and} \\
        \R \ &:= \ \bigcap_{k \geq 1} \R_k.
    \end{align*}
    By assumption, we have $\nu(\R) = 1$.
    We show that any $y \in \R$ has a rigidity sequence.

    Fix $y \in \R$.
    For each $k$, pick a sequence of times $n_{k, 1} < n_{k, 2} < \dots$ such that $\dist_{y}(P_k, T_\a^{-n_{k, \ell}}P_k) < 1/k$ for every $k$ and every $\ell$.
    Such a sequence exists by the definition of the sets $\mathcal{R}_k$.
    By simply deleting finitely many times if necessary, we may also assume that $n_{k+1, k+1} > n_{k,k}$ for every $k$.

    Now we claim that the sequence of times $(n_{k,k})$ is a rigidity sequence for $y$.
    By the discussion in \Cref{definition: AutomorphismTopology}, it suffices to show that for any fixed $j$,
    \[
        \dist_m(\D_j, \a_{n_{k,k}}(y)^{-1} \D_j) \to 0 \qquad \text{as $k \to \infty$}.
    \]
    Observe that for any $s \in \N$,
    \begin{align*}
        \dist_m(\D_j, \a_{s}(y)^{-1} \D_j) \ &= \ m \{ t : \D_j(t) \neq \D_j(\a_s(y)t) \} \\
        &= \ \mu_y \{ (y,t) : \D_j(t) \neq \D_j(\a_s(y)t) \} \\
        &= \ \mu_y \{ (y,t) : P_j(y,t) \neq P_j(T_\a^s(y,t)) \} \\
        &= \ \dist_y (P_j, T_\a^{-s}P_j).
    \end{align*}  
    Because $P_j$ is refined by $P_k$ for all $k \geq j$, we get
    \begin{align*}
        \lim_{k \to \infty} \dist_m(\D_j, \a_{n_{k,k}}(y)^{-1} \D_j) \ &= \ \lim_{k \to \infty} \dist_y (P_j, T_\a^{-n_{k,k}}P_j) \\
        &\leq \ \lim_{k \to \infty} \dist_y (P_k, T_\a^{-n_{k,k}}P_k) \ = \ 0
    \end{align*}
    as desired.
\end{proof}

\subsection{Genericity}

First, we show that generic extensions of an ergodic system are rigid.
First, let us recall some basic definitions.
We denote by $\co(Y)$ the set of all cocycles on $Y$, i.e. the set of all measurable maps $\a: Y \to \aut(I,m)$.
Each $\a \in \co(Y)$ induces an extension $\X_\a = (Y \times I, \nu \times m, T_\a)$ of 
$\Y$
via the skew product transformation $T_\a(y,t) = (Sy, \a(y)t)$.
By identifying each $\a \in \co(Y)$ with the skew product $T_\a \in \aut(Y \times I, \nu \times m)$, we endow $\co(Y)$ with the topology it inherits as a subspace of the weak topology on $\aut(Y \times I, \nu \times m)$.

\begin{theorem} \label{theorem: RigidGeneric}
    Let 
    $\Y$ 
    be ergodic.  Then the set of rigid cocycles $\a \in \co(Y)$ is a dense $G_\delta$ set.
\end{theorem}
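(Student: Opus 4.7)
The plan is to establish the two properties of the set $\mathcal{R}$ of rigid cocycles in $\co(Y)$ separately.

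For the $G_\delta$ property, I would first observe that the sufficient condition in \Cref{lemma: RigidityDiagonalization} is also necessary: if $\a$ is rigid with rigidity sequence $(n_j)$ for $\nu$-a.e.\ $y$, then
\[
    \dist_y(P_k, T_\a^{-n_j}P_k) \ = \ \dist_m(\D_k, \a_{n_j}(y)^{-1}\D_k) \ \longrightarrow \ 0
\]
for every $k$, so infinitely many $n$ satisfy the inequality. Thus $\a \in \mathcal{R}$ if and only if for all $k \in \N$, $\epsilon, \delta \in \Q_{>0}$, and $N \in \N$, there exists $N' \in \N$ such that
\[
    \nu\bigl\{y \in Y : \exists n \in [N, N'] \text{ with } \dist_y(P_k, T_\a^{-n}P_k) < \epsilon \bigr\} \ > \ 1 - \delta,
\]
expressing $\mathcal{R}$ as a countable intersection of countable unions of sets of this form. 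Each such set is open in $\co(Y)$: a small perturbation $\a \leadsto \a'$ changes each iterate $T_\a^{-n}P_k$ by a small amount in $L^1(\nu)$-averaged partition distance (for each fixed $n \in [N, N']$), and a short Markov-type argument (shrinking $\epsilon$ slightly to $\epsilon - \beta$ for small $\beta$ using continuity of measure) transfers the strict inequality $>1-\delta$ to $\a'$.

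For density, I would exhibit an explicit dense class of rigid cocycles: those taking values in a finite subgroup $\Sigma_k \subseteq \aut(I,m)$ of dyadic permutations at some level $k$ (elements that permute the $2^k$ intervals of $\D_k$). To see that any $\Sigma_k$-valued $\a$ is rigid, consider the lifted skew product $\tilde T(y, \sigma) := (Sy, \a(y)\sigma)$ on $Y \times \Sigma_k$, which preserves the product measure $\nu \times u_k$ with $u_k$ uniform on $\Sigma_k$. Applying Poincar\'e recurrence to the positive-measure set $Y \times \{\id\}$, I get that for $\nu$-a.e.\ $y$, $\tilde T^n(y, \id) = (S^ny, \a_n(y))$ returns to $Y \times \{\id\}$ for infinitely many $n$, so $\a_n(y) = \id$ infinitely often and $\a$ is rigid. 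Density of this class follows from the classical fact that $\bigcup_k \Sigma_k$ is dense in $\aut(I,m)$: given $\a \in \co(Y)$ and $\eta > 0$, choose $k$ large enough that $V_k := \{\phi \in \aut(I,m) : d_A(\phi, \Sigma_k) < \eta\}$ satisfies $\nu\{y : \a(y) \in V_k\} > 1 - \eta$, and by measurable selection (a finite argmin over $\Sigma_k$) define $\a'(y)$ to be a nearest element of $\Sigma_k$ to $\a(y)$ on that set (and $\a'(y) = \id$ elsewhere). Then $\int d_A(\a(y), \a'(y)) \, d\nu(y) < 2\eta$, and this translates into weak-topology closeness of $T_{\a'}$ to $T_\a$ via the standard estimate $m(\phi^{-1}B \,\triangle\, \psi^{-1}B) \leq 2^{j+1} d_A(\phi, \psi)$ for $B \in \D_j$.

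The main obstacle is the openness verification in the $G_\delta$ step: the witnessing time $n$ for the sublevel condition at each $y$ depends on $y$, so some care is needed to ensure a uniform bound on how the good set shrinks under perturbation. Because the inner union is over the finite window $[N, N']$ only, though, it suffices to analyze the perturbation of $\{y : \dist_y(P_k, T_\a^{-n}P_k) < \epsilon\}$ for each $n$ individually and take a union bound; no genuinely new idea is needed beyond patience with the Markov- and triangle-inequality bookkeeping, combined with the slack built into the strict inequality $> 1-\delta$.
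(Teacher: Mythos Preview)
Your proposal is correct and follows essentially the same architecture as the paper: write the set of rigid cocycles as a countable intersection of the open sets $\mathcal{U}_{k,N,\epsilon,\eta}$ (your sets indexed by $k,\epsilon,\delta,N,N'$ are the same objects after unraveling the existential over $N'$), prove openness by the finite-window Markov/triangle-inequality argument you describe, and prove density by showing that cocycles valued in the finite group of dyadic permutations are rigid and dense.

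The one genuine difference is your argument for why a $\Sigma_k$-valued cocycle is rigid. The paper argues combinatorially: it fixes $y$, defines $\Sigma_y \subseteq \sym_M$ as the set of permutations hit by $\a_n(y)$ infinitely often, shows that whenever $\a_n(y) \in \Sigma_y$ one has $S^n y \in \mathcal{R} = \{y : \a_n(y)=\id \text{ i.o.}\}$, and then invokes ergodicity of $\Y$ to conclude $\nu(\mathcal{R})=1$. Your Poincar\'e recurrence argument on the skew product $(Y \times \Sigma_k, \nu \times u_k, \tilde T)$ applied to the positive-measure set $Y \times \{\id\}$ is cleaner, shorter, and does not use ergodicity of $\Y$ at all. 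Likewise, for the density of $\Sigma_k$-valued cocycles in $\co(Y)$ the paper simply cites \cite{glasner2019relativeweakmixing} and Halmos, whereas you supply a self-contained measurable-selection construction; both are fine, and yours buys independence from external references at the cost of a paragraph of bookkeeping.
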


\begin{remark}
    In \cite{schnurr2018rigid}, the author uses a different definition of rigid extension and shows that the set of rigid cocycles forms a $G_\delta$ set, but is not able to show that it is dense.
    Here, because we allow the rigidity sequence to depend on the base point, we are able to establish density as well.
\end{remark}

As in the previous section, let $\D_k$ be the level-$k$ dyadic partition of $I$ and let 
\[
    P_k \ = \ \{Y \times E : E \in \D_k \}.
\]
Given $k, N \in \N$, $\epsilon > 0$, and $\a \in \co(Y)$, define the set
\[
\mathcal{R}_{k,N,\epsilon}(\a) \ := \ \left\{ y \in Y : \text{ there exists an $n > N$ such that } \dist_y(P_k, T_\a^{-n} P_k) < \epsilon \right\}.
\]
Given another parameter $\eta > 0$, also define
\[
\mathcal{U}_{k,N,\epsilon, \eta} \ := \ \left\{ \a \in \co(Y) : \nu(\mathcal{R}_{k,N,\epsilon}(\a)) > 1-\eta \right\}.
\]

\begin{lemma} \label{lemma: RigidSetAsIntersection}
    The set of rigid cocycles $\a \in \co(Y)$ is given by 
    \[
    \bigcap_{k \geq 1} \bigcap_{\epsilon \searrow 0} \bigcap_{\eta \searrow 0} \bigcap_{N \geq 1} \mathcal{U}_{k,N,\epsilon, \eta},
    \]
    where the intersections over $\eta$ and $\epsilon$ should be interpreted as intersections over countable sequences tending to $0$.
\end{lemma}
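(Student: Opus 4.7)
The plan is to prove the two set-equalities in turn, using \Cref{lemma: RigidityDiagonalization} for one direction and a direct argument from the definition of rigidity for the other. Write $\R$ for the right-hand side of the claimed equality and $\operatorname{Rig}$ for the set of rigid cocycles.

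First I would unwind the intersections starting from the inside. Fix $\a$ and $k$, $\epsilon > 0$. Since $\mathcal{R}_{k,N,\epsilon}(\a)$ is a decreasing sequence of sets in $N$, the monotone convergence of measure gives
\[
\bigcap_{N \geq 1} \mathcal{R}_{k,N,\epsilon}(\a) \ = \ \left\{ y \in Y : \dist_y(P_k, T_\a^{-n}P_k) < \epsilon \text{ for infinitely many } n \right\},
\]
and $\nu$ of this intersection equals $\lim_N \nu(\mathcal{R}_{k,N,\epsilon}(\a))$. From this, the condition $\a \in \bigcap_N \mathcal{U}_{k,N,\epsilon,\eta}$ is equivalent to $\nu \bigl( \bigcap_N \mathcal{R}_{k,N,\epsilon}(\a) \bigr) \geq 1-\eta$. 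Taking the further intersection over a sequence $\eta \searrow 0$ forces $\nu \bigl( \bigcap_N \mathcal{R}_{k,N,\epsilon}(\a) \bigr) = 1$. So $\a \in \R$ is equivalent to the statement that for every $k \geq 1$ and every $\epsilon$ in the chosen countable sequence tending to $0$,
\[
\nu \left\{ y \in Y : \dist_y(P_k, T_\a^{-n}P_k) < \epsilon \text{ for infinitely many } n \right\} \ = \ 1.
\]
Moreover, since this set is monotone increasing in $\epsilon$, restricting $\epsilon$ to a sequence going to $0$ is the same as requiring the condition for every $\epsilon > 0$.

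For the inclusion $\R \subseteq \operatorname{Rig}$, the reformulation above is exactly the hypothesis of \Cref{lemma: RigidityDiagonalization}, so any $\a \in \R$ is rigid.

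For the reverse inclusion $\operatorname{Rig} \subseteq \R$, suppose $\a$ is rigid. For $\nu$-a.e. $y$, there is a sequence $(n_\ell)$ with $\a_{n_\ell}(y) \to \id$ in $\aut(I,m)$. Using the identity
\[
\dist_y(P_k, T_\a^{-n}P_k) \ = \ \dist_m(\D_k, \a_n(y)^{-1} \D_k)
\]
established in the proof of \Cref{lemma: RigidityDiagonalization}, the convergence $\a_{n_\ell}(y) \to \id$ in the topology of \Cref{definition: AutomorphismTopology} immediately gives $\dist_y(P_k, T_\a^{-n_\ell}P_k) < \epsilon$ for all sufficiently large $\ell$, hence for infinitely many $n$. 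So for every $k$ and every $\epsilon > 0$, the set of such $y$ has full measure, which by the first paragraph places $\a$ in $\R$.

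No serious obstacles arise here; the argument is essentially bookkeeping of quantifiers. The only place to be careful is the interchange between the countable intersections (over $\epsilon$ and $\eta$ in sequences tending to $0$) and the corresponding "for all $\epsilon > 0$" statements, which is justified by the monotonicity of $\mathcal{R}_{k,N,\epsilon}(\a)$ in $\epsilon$ and $N$.
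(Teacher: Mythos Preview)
Your argument is correct and follows essentially the same route as the paper: one inclusion is immediate from the definition of rigidity (the paper simply calls this ``clear'' where you spell out the identity $\dist_y(P_k,T_\a^{-n}P_k)=\dist_m(\D_k,\a_n(y)^{-1}\D_k)$), and the other is reduced to \Cref{lemma: RigidityDiagonalization} after unwinding the intersections. The only minor imprecision is your claimed \emph{equivalence} between $\a\in\bigcap_N\mathcal{U}_{k,N,\epsilon,\eta}$ and $\nu\bigl(\bigcap_N\mathcal{R}_{k,N,\epsilon}(\a)\bigr)\geq 1-\eta$ (the strict inequality in the definition of $\mathcal{U}$ makes only one implication hold in general), but this is harmless once you intersect over $\eta\searrow 0$.
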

\begin{proof}
    It's clear that every rigid cocycle $\a$ satisfies $\nu(\mathcal{R}_{k,N,\epsilon}(\a)) = 1$ for all $k, N, \epsilon, \eta$, so therefore $\a$ is an element of every $\mathcal{U}_{k,N,\epsilon, \eta}$.

    Conversely, suppose $\a$ is an element of every $\mathcal{U}_{k,N,\epsilon, \eta}$.
    This implies that 
    \[
    \nu(\mathcal{R}_{k,N,\epsilon}(\a)) \ > \ 1-\eta
    \] 
    for all $\eta > 0$, so $\nu(\mathcal{R}_{k,N,\epsilon}(\a)) = 1$.
    This holds for every $N$, so
    \[
    \nu \left( \bigcap_{N \geq 1} \mathcal{R}_{k,N,\epsilon}(\a) \right) \ = \ \nu \{y \in Y : \dist_y(P_k, T_\a^{-n} P_k) < \epsilon \text{ for infinitely many $n$}\} \ = \ 1
    \]
    as well.
    Finally, this holds for every $k$ and every $\epsilon > 0$, so by \Cref{lemma: RigidityDiagonalization}, we conclude that $\a$ is a rigid cocycle.
\end{proof}

\begin{lemma} \label{lemma: RigidDense}
    Each $\mathcal{U}_{k,N,\epsilon, \eta}$ is dense in $\co(Y)$.
\end{lemma}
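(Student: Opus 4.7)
The strategy is to approximate an arbitrary $\beta \in \co(Y)$ by a cocycle $\alpha$ taking values in a fixed finite subgroup of $\aut(I,m)$, and then to exploit Poincar\'e recurrence on a skew product over that finite group to get the rigidity condition essentially for free.

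For each $K \geq 1$, let $\Pi_K \subseteq \aut(I,m)$ denote the finite subgroup of dyadic permutations at depth $K$: the measure-preserving bijections of $I$ that restrict to a translation on each cell of $\D_K$, thereby permuting these cells. Each $\Pi_K$ is isomorphic to $S_{2^K}$, we have $\Pi_{K'} \subseteq \Pi_K$ whenever $K' \leq K$, and by the classical Halmos--Rokhlin approximation theorem, $\bigcup_K \Pi_K$ is dense in $\aut(I,m)$.

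Fix $\beta \in \co(Y)$ and a basic open neighborhood $V$ of $\beta$ in $\co(Y)$. Density implies $d_A(\beta(y), \Pi_K) \to 0$ as $K \to \infty$ for each $y$; since $d_A$ is uniformly bounded, bounded convergence gives $\int_Y d_A(\beta(y), \Pi_K)\,d\nu(y) \to 0$. Choose $K$ large enough that this integral is below a prescribed $\delta > 0$, and use a measurable selection---partitioning $Y$ according to which element of $\Pi_K$ is nearest to $\beta(y)$ and breaking ties via a fixed ordering of $\Pi_K$---to obtain a measurable cocycle $\alpha: Y \to \Pi_K$ with $d_A(\alpha(y), \beta(y)) = d_A(\beta(y), \Pi_K)$. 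A routine dominated-convergence estimate on symmetric differences $(\nu \times m)(T_\alpha^{-1}A \,\triangle\, T_\beta^{-1}A)$ shows that whenever $\int d_A(\alpha(y), \beta(y))\,d\nu(y)$ is small, $T_\alpha$ is close to $T_\beta$ in the weak topology of $\aut(Y \times I, \nu \times m)$; hence for $\delta$ small enough, $\alpha \in V$.

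To conclude that $\alpha \in \mathcal{U}_{k,N,\epsilon,\eta}$, observe that since $\alpha$ takes values in $\Pi_K$, every iterate $\alpha_n(y)$ also lies in $\Pi_K$. Consider the auxiliary skew product $\tilde T(y,\sigma) := (Sy, \alpha(y)\sigma)$ on $Y \times \Pi_K$ equipped with $\nu \times \mu_{\Pi_K}$, where $\mu_{\Pi_K}$ is normalized counting measure; since left multiplication in $\Pi_K$ preserves $\mu_{\Pi_K}$, this is measure-preserving. Applying Poincar\'e recurrence to $\tilde T$ and the positive-measure set $Y \times \{\id\}$ shows that for $\nu$-a.e. $y \in Y$, $\alpha_n(y) = \id$ for infinitely many $n$, so in particular some such $n$ exceeds $N$; for this $n$, $\alpha_n(y)^{-1}\D_k = \D_k$ exactly, hence $\dist_y(P_k, T_\alpha^{-n}P_k) = 0 < \epsilon$. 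Therefore $\nu(\mathcal{R}_{k,N,\epsilon}(\alpha)) = 1 > 1-\eta$ and $\alpha \in \mathcal{U}_{k,N,\epsilon,\eta}$. The only nontrivial technical point is the $L^1$-to-weak continuity claim for $\alpha \mapsto T_\alpha$; the rest of the argument becomes automatic once one commits to approximating by $\Pi_K$-valued cocycles, since recurrence into the identity fiber is then forced by the finiteness of $\Pi_K$.
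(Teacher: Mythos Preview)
Your proof is correct and reaches the same destination as the paper's, but by a somewhat different route at both stages. For the approximation step, the paper simply cites two density results (piecewise constant cocycles are dense in $\co(Y)$, and dyadic permutations are dense in $\aut(I,m)$) and combines them to produce the dense class $\mathscr{D}$ of finitely-valued dyadic-permutation cocycles; you instead do the work directly via nearest-point projection into $\Pi_K$ and the $L^1$-to-weak continuity of $\a \mapsto T_\a$, which is more self-contained but requires the routine verification you flagged. For the recurrence step, the paper argues combinatorially: for each $y$ it lets $\Sigma_y$ be the set of values hit infinitely often by $\a_n(y)$, observes that whenever $\a_n(y) \in \Sigma_y$ one has $S^n y \in \mathcal{R} = \{y : \a_n(y) = \id \text{ i.o.}\}$, so the orbit of every $y$ lands in $\mathcal{R}$ cofinitely often, and then invokes ergodicity of $\Y$ to force $\nu(\mathcal{R}) = 1$. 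Your Poincar\'e-recurrence argument on the auxiliary skew product $Y \times \Pi_K$ is cleaner and has the minor advantage of not using the ergodicity hypothesis at all for this lemma.
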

\begin{proof}
    Recall that a \textbf{dyadic permutation of rank $M$} is an element $\phi \in \aut(I,m)$ that permutes the cells of $\D_M$ and acts as a translation on each cell.
    By \cite[Lemma 1.2]{glasner2019relativeweakmixing}, the set of piecewise constant cocycles is dense in $\co(Y)$.
    By Halmos's Weak Approximation Theorem \cite[page 65]{halmos1956ergodic}, the set of dyadic permutations is dense in $\aut(I,m)$.
    Therefore, we consider the dense set $\mathscr{D}$ of cocycles $\a$ such that $\{ \a(y) : y \in Y \}$ is a finite set of dyadic permutations.
    We show that each of the sets $\mathcal{U}_{k,N,\epsilon, \eta}$ contains $\mathscr{D}$.
    To do this, it is clearly sufficient to show that each $\a \in \mathscr{D}$ is a rigid cocycle.

    Fix $\a \in \mathscr{D}$.
    Because $\a$ takes only finitely many values, there is some $M$ such that each $\a(y)$ is a dyadic permutation of rank $M$.
    So we may consider $\a$ to be a map from $Y$ into $\sym_M$ (the subgroup of $\aut(I,m)$ consisting of dyadic permutations of rank $M$, isomorphic to the symmetric group on $M$ elements).
    Define
    \[
        \mathcal{R} \ = \ \{ y \in Y : \a_n(y) = \id \text{ for infinitely many $n$} \}.
    \]  
    We want to show that $\nu(\mathcal{R}) = 1$.
    To do this, fix $y \in Y$ and let
    \[
        \Sigma_y \ = \ \{ \sigma \in \sym_M : \a_n(y) = \sigma \text{ for infinitely many $n$} \}.
    \]  
    Observe that $\{n \in \N : \a_n(y) \in \Sigma_y \}$ must be co-finite.
    Now we claim that if $\a_n(y) \in \Sigma_y$, then $S^n y \in \mathcal{R}$.
    This is because if $\a_n(y) \in \Sigma_y$, then there are infinitely many $m > n$ satisfying $\a_m(y) = \a_n(y)$.
    For all such $m$, we have
    $\a_{m-n}(S^n y) \a_n(y) = \a_m(y)$, which implies that $\a_{m-n}(S^n y) = \id$ for infinitely many $m$, so $S^n y \in \mathcal{R}$.

    Therefore we have shown that for every $y \in Y$, the set of $n$ such that $S^n y \in \mathcal{R}$ is co-finite.
    By ergodicity, this implies that $\nu(\mathcal{R}) = 1$ as desired. 
\end{proof}

\begin{lemma} \label{lemma: RigidOpen}
    Each $\mathcal{U}_{k,N,\epsilon, \eta}$ is open in $\co(Y)$.
\end{lemma}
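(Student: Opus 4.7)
The plan is to show that every $\a \in \mathcal{U}_{k,N,\epsilon,\eta}$ has an open neighborhood contained in $\mathcal{U}_{k,N,\epsilon,\eta}$, using the strict inequalities in the definition as slack. Abbreviate $A_n(\a, s) := \{y \in Y : \dist_y(P_k, T_\a^{-n}P_k) < s\}$, so that $\mathcal{R}_{k,N,\epsilon}(\a) = \bigcup_{n > N} A_n(\a, \epsilon)$. Since $\nu(\mathcal{R}_{k,N,\epsilon}(\a)) > 1-\eta$ strictly and $A_n(\a, \epsilon) = \bigcup_{\delta > 0} A_n(\a, \epsilon - \delta)$, I can first choose $\gamma, \delta > 0$ and a finite set of times $N < n_1 < \dots < n_r$ satisfying
\[
\nu \left( \bigcup_{i=1}^{r} A_{n_i}(\a, \epsilon - \delta) \right) \ > \ 1 - \eta + \gamma.
\]
This fixes a finite collection of witnesses with a uniform gap away from the threshold $\epsilon$.

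The key topological ingredient is that $\co(Y)$ inherits its topology from the weak topology on the Polish group $\aut(Y \times I, \nu \times m)$, in which composition is jointly continuous. In particular, for each fixed $n$, the iterate map $T_\a \mapsto T_\a^n$ is continuous. Unpacking the skew-product formula $T_\a^n(y,t) = (S^n y, \a_n(y) t)$ and disintegrating over $Y$ (where $\mu_y = \delta_y \times m$), this translates into
\[
\int_Y m\left( \a_n(y)^{-1} F \,\triangle\, \beta_n(y)^{-1} F \right) d\nu(y) \ \to \ 0 \qquad \text{as $\beta \to \a$ in $\co(Y)$}
\]
for every fixed $n$ and every measurable $F \subseteq I$. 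I will then take a small enough open neighborhood $V$ of $\a$ in $\co(Y)$ to make this integral uniformly small over the finite family of pairs $(n_i, F)$ with $F \in \D_k$.

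For $\beta \in V$, a Markov's inequality argument produces a set $W \subseteq Y$ with $\nu(W) > 1-\gamma$ on which $m(\a_{n_i}(y)^{-1} F \,\triangle\, \beta_{n_i}(y)^{-1} F)$ is uniformly small in $i$ and $F \in \D_k$, sufficient to ensure $\dist_m(\a_{n_i}(y)^{-1}\D_k, \beta_{n_i}(y)^{-1}\D_k) < \delta$. Using the identity $\dist_y(P_k, T_\a^{-n}P_k) = \dist_m(\D_k, \a_n(y)^{-1} \D_k)$ and the triangle inequality for $\dist_m$, every $y \in W \cap A_{n_i}(\a, \epsilon - \delta)$ then satisfies $\dist_y(P_k, T_\beta^{-n_i} P_k) < (\epsilon - \delta) + \delta = \epsilon$, and hence lies in $\mathcal{R}_{k,N,\epsilon}(\beta)$. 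Therefore $\nu(\mathcal{R}_{k,N,\epsilon}(\beta)) \geq (1 - \eta + \gamma) - \gamma = 1 - \eta$, so $\beta \in \mathcal{U}_{k,N,\epsilon,\eta}$.

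The main obstacle I anticipate is purely bookkeeping: organizing the nested choices of $\gamma$, $\delta$, the finite family of times, the neighborhood $V$, and the Markov thresholds in the correct order so that the final measure estimate clears $1 - \eta$ strictly. The only nontrivial analytic input beyond this is the joint continuity of composition in $\aut(Y \times I, \nu \times m)$, which is standard for this Polish group.
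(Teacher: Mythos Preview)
Your argument is correct and follows essentially the same route as the paper: reduce to finitely many times with a slightly tightened threshold $\epsilon-\delta$, use continuity of $\beta \mapsto T_\beta^n$ in the weak topology together with Markov's inequality to control the fiberwise partition distances, and finish with the triangle inequality. The only cosmetic slip is your final ``$\geq 1-\eta$'': since both $\nu(W) > 1-\gamma$ and $\nu\big(\bigcup_i A_{n_i}(\a,\epsilon-\delta)\big) > 1-\eta+\gamma$ are strict, the inclusion--exclusion bound gives the strict inequality $\nu(\mathcal{R}_{k,N,\epsilon}(\beta)) > 1-\eta$ that membership in $\mathcal{U}_{k,N,\epsilon,\eta}$ actually requires.
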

\begin{proof}
    Fix $\a \in \mathcal{U}_{k,N,\epsilon, \eta}$.
    We may write the set $\mathcal{R}_{k,N,\epsilon}(\alpha)$ as
    \begin{align*}
    &\mathcal{R}_{k,N,\epsilon}(\alpha) = \\
    &\ \bigcup_{M > N} \bigcup_{\epsilon' < \epsilon} \{y \in Y : \text{ there exists some $n \in (N,M]$ such that } \dist_y(P_k, T_\a^{-n} P_k) < \epsilon' \}.
    \end{align*}
    Since $\nu(\mathcal{R}_{k,N,\epsilon}) > 1-\eta$, it follows that there exist $M > N$, $\epsilon' < \epsilon$, and $\eta' < \eta$ such that 
    \begin{align*}
    &\nu(\mathcal{R}') \ := \\
    &\nu \{y \in Y : \text{ there exists some $n \in (N,M]$ such that } \dist_y(P_k, T_\a^{-n} P_k) < \epsilon' \} = 1-\eta'.
    \end{align*}

    Let $\sigma > 0$ be a parameter that is so small that $\sigma < \epsilon - \epsilon'$ and $(M-N)\sigma < \eta - \eta'$.
    Let $\mathcal{O}$ be an open neighborhood of $\alpha$ that is so small that for any $\b \in \mathcal{O}$, we have 
    \[
    \dist_\mu(T_\a^{-n} P_k, T_\b^{-n} P_k) \ < \ \sigma^2 
    \] 
    for all $n \in (N,M]$.
    This is possible because 
    \begin{itemize}
        \item for every $n$, the map $T_\b \mapsto T_\b^n$ is a continuous map from $\aut(Y \times I, \nu \times m)$ to itself because $\aut(Y \times I, \nu \times m)$ is a topological group, and
        \item for any fixed $\gamma \in \co(Y)$, the map $\b \mapsto \dist_\mu(T_\gamma^{-1} P_k, T_\b^{-1} P_k)$ is a continuous map $\co(Y) \to [0,1]$ by definition of the weak topology on $\aut(Y \times I, \nu \times m)$. 
    \end{itemize}
    We show that any $\b \in \mathcal{O}$ is also in $\mathcal{U}_{k,N,\epsilon, \eta}$.

    Because $\dist_\mu(P,Q) = \int \dist_y(P,Q) \,d\nu(y)$, we apply Markov's inequality to conclude that for each $n \in (N,M]$, there is a set of measure $>1-\sigma$ on which $\dist_y(T_\a^{-n} P_k, T_\b^{-n}P_k) < \sigma$.
    Now define
    \[
    \wt{Y} \ := \ \{y \in Y : \dist_y(T_\a^{-n} P_k, T_\b^{-n}P_k) < \sigma \text{ for all $n \in (N,M]$} \}
    \]
    and note that $\nu(\wt{Y}) > 1-(M-N)\sigma$.

    Now consider some $y \in \wt{Y} \cap \mathcal{R}'$.
    Because $y \in \mathcal{R}'$, there is $n \in (N,M]$ such that $\dist_y(P_k, T_\a^{-n}P_k) < \epsilon'$.
    Then, for that same $n$, we get the estimate
    \[
        \dist_y(P_k, T_\b^{-n}P_k) \ \leq \ \dist_y(P_k, T_\a^{-n}P_k) + \dist_y(T_\a^{-n} P_k, T_\b^{-n}P_k) \ < \ \epsilon' + \sigma \ < \ \epsilon,
    \]
    where the second inequality holds because $y \in \wt{Y}$.

    This shows that for all $y \in \wt{Y} \cap \mathcal{R}'$, there exists an $n \in (N,M]$ such that $\dist_y(P_k, T_\b^{-n}P_k) < \epsilon$, showing that $\mathcal{R}_{k,N,\epsilon}(\beta) \supseteq \wt{Y} \cap \mathcal{R}'$.
    Since $\nu(\wt{Y} \cap \mathcal{R}') > 1 - (M-N)\sigma - \eta' > 1-\eta$, it follows that $\beta \in \mathcal{U}_{k,N,\epsilon, \eta}$ as desired.
\end{proof}

\begin{proof}[Proof of \Cref{theorem: RigidGeneric}]
    Follows immediately from \Cref{lemma: RigidSetAsIntersection,lemma: RigidDense,lemma: RigidOpen} and the Baire category theorem.
\end{proof}

\subsection{Relationship between rigidity and slow entropy}

Throughout this section, let $L$ denote the rate function $L(n) = \log n$.
First we give a sufficient condition for an extension to be rigid.

\begin{theorem} \label{theorem: RelativeRigidtySufficientCondition}
    Assume that $\Y$ is ergodic.
    Let $\a$ be a cocycle on $Y$ and let $\pi: \X_\a \to \Y$ denote projection onto the first coordinate.
    Suppose that there exists a F\o lner sequence $(F_n)$ for $\N$ such that $\h^{L, (F_n)}(\X_\a \,|\, \pi) = 0$.
    Then $\X_\a$ is a rigid extension of $\Y$. 
\end{theorem}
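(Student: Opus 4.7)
The plan is to invoke \Cref{lemma: RigidityDiagonalization}, so it suffices to show that for each fixed $k \geq 1$ and $\epsilon > 0$, the set
\[
E(k, \epsilon) \ := \ \left\{ y \in Y : \dist_y(P_k, T_\a^{-n} P_k) < \epsilon \text{ for infinitely many } n \in \N \right\}
\]
has $\nu$-measure $1$. Fix such $k, \epsilon$ and choose $\epsilon' > 0$ with $5\sqrt{\epsilon'} < \epsilon$. By the slow-entropy hypothesis, $\cov(\mu, P_k, F_n, \epsilon' \,|\, \pi) = o(\log|F_n|)$; so for any $\delta > 0$ and all $n$ sufficiently large, there is $\Omega_n \subseteq Y$ with $\nu(\Omega_n) \geq 1 - \epsilon'$ on which $\cov(\mu_y, P_k, F_n, \epsilon') \leq \delta \log|F_n|$.

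Next I would import, fiberwise, the pigeonhole word-pattern construction from the proof of \Cref{lemma: CloseTogetherPartition} to extract, for each $y \in \Omega_n$, a subset $\mathcal{E}(y) \subseteq F_n$ with
\[
|\mathcal{E}(y)| \ \geq \ (1-\sqrt{2\epsilon'}) \cdot |F_n| \cdot 2^{-\delta \log|F_n|}
\]
and $\dist_y(T_\a^{-s}P_k, T_\a^{-t}P_k) \leq 5\sqrt{\epsilon'} < \epsilon$ for all $s, t \in \mathcal{E}(y)$. The cocycle relation $\a_{t-s}(S^s y) = \a_t(y)\a_s(y)^{-1}$ together with the $\a_s(y)$-invariance of Lebesgue measure yields the key skew-product identity
\[
\dist_y(T_\a^{-s}P_k, T_\a^{-t}P_k) \ = \ \dist_{S^s y}(P_k, T_\a^{-(t-s)} P_k).
\]
Hence for every $y \in \Omega_n$ and every $s \in \mathcal{E}(y) \setminus \{\max \mathcal{E}(y)\}$, the shifted point $z := S^s y$ admits at least $|\mathcal{E}(y)| - 1$ near-rigidity times (values of $u > 0$ with $\dist_z(P_k, T_\a^{-u}P_k) < \epsilon$) lying in $(0, |F_n|)$, namely the set $\{t - s : t \in \mathcal{E}(y),\ t > s\}$.

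To conclude I would argue by contradiction. If $\nu(E(k, \epsilon)) < 1$, there exist $\eta > 0$, $N_0 \in \N$, and $C \subseteq Y$ with $\nu(C) \geq \eta$ such that $\dist_y(P_k, T_\a^{-n}P_k) \geq \epsilon$ for every $y \in C$ and $n > N_0$. The previous paragraph then forces any $s \in \mathcal{E}(y) \cap \{s : S^s y \in C\}$ to satisfy $\max \mathcal{E}(y) - s \leq N_0$, so this intersection sits in an $N_0$-window at the top of $\mathcal{E}(y)$ and has cardinality at most $N_0 + 1$. Integrating and using Fubini with the $S$-invariance of $\nu$, one compares this upper bound against the ``expected'' contribution supplied by Birkhoff's theorem applied to $1_C$ on the ergodic base $\Y$: on average each $y$ has Birkhoff density $\eta$ of $C$-return times in $F_n$, and if $\mathcal{E}(y)$ is sufficiently generic relative to these returns, then the integrated intersection should grow at least like $\eta |\mathcal{E}(y)|$, which diverges as $n \to \infty$, contradicting the $N_0 + 1$ ceiling for $\delta$ small enough.

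The main obstacle is precisely this last averaging/Fubini step. Because the hypothesis gives only $\cov(\mu_y, P_k, F_n, \epsilon') = o(\log|F_n|)$ rather than a bounded covering number, the size bound $|\mathcal{E}(y)| \geq |F_n|^{1-\delta\log 2}$ can be sublinear in $|F_n|$, so the naive cardinality inequality $|A \cap B| \geq |A| + |B| - |F_n|$ cannot rule out a pathological $y$ whose $\mathcal{E}(y)$ is disjoint from its $C$-return times. Making the contradiction rigorous requires a careful coupling of the slow-entropy parameters $(\epsilon', \delta, n)$ with ergodicity of $\Y$ (possibly supplemented by Lusin's theorem applied to the measurable cocycle $\a$) to upgrade the pointwise Birkhoff statement into an averaged lower bound on the intersection that eventually exceeds $N_0 + 1$.
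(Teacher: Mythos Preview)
Your proposal contains all of the right ingredients and is very close to the paper's proof, but the gap you identify is real, and the fix is to avoid the pigeonhole step rather than to repair the averaging afterwards.

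The paper's argument begins exactly as yours does: fix $\epsilon$, take $n$ large, fix $y$ with $\cov(\mu_y, P, F_n, \epsilon) \leq C$ where $C = \cov(\mu, P, F_n, \epsilon \mid \pi)$, and run the construction from \Cref{lemma: CloseTogetherPartition} to obtain the set $A \subseteq F_n$ with $|A| \geq (1-\sqrt{2\epsilon})|F_n|$ and its decomposition $A = \bigcup_w A_w$ indexed by $w \in \{0,\dots,|P|-1\}^C$. The skew-product identity you wrote down is exactly what is needed (the paper uses it implicitly): whenever $s,t$ lie in the same block $A_w$ with $t>s$, the point $S^s y$ has a near-rigidity time $t-s$.

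The crucial difference is that the paper does \emph{not} pigeonhole to a single block. Instead it fixes a cutoff $m$ in advance and observes that in each block $A_w$, every element $s$ except possibly the top $m$ satisfies: there exists $t \in A_w$ with $t-s > m$, hence $S^s y \in \mathcal{R}_{P,\epsilon,m} := \{z : \dist_z(P, T_\a^{-k}P) < 5\sqrt\epsilon \text{ for some } k>m\}$. Summing over all blocks, the total number of exceptional $s \in A$ is at most $m \cdot |P|^C$. Now the slow-entropy hypothesis is used in the precise form $|P|^C \leq (\epsilon/m)|F_n|$ for $n$ large, so the exceptional set has size at most $\epsilon|F_n|$, and hence
\[
\#\{s \in F_n : S^s y \in \mathcal{R}_{P,\epsilon,m}\} \ \geq \ |A| - m|P|^C \ \geq \ (1-3\sqrt\epsilon)|F_n|.
\]
Since this holds for a single $y$ chosen (via the mean ergodic theorem for $1_{\mathcal{R}_{P,\epsilon,m}}$) to also have Birkhoff average close to $\nu(\mathcal{R}_{P,\epsilon,m})$, one reads off $\nu(\mathcal{R}_{P,\epsilon,m}) \geq 1 - 4\sqrt\epsilon$ directly, with no contradiction argument. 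Passing to $\mathcal{R}_{P,\epsilon} = \limsup_m \mathcal{R}_{P,\epsilon,m}$ and then letting $\epsilon \to 0$ along a sequence (with the diagonalization of \Cref{lemma: RigidityDiagonalization}) finishes the proof.

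In short: your pigeonhole step discards the near-linear set $A$ in favor of a possibly sublinear $\mathcal{E}(y)$, and that is exactly what creates the obstacle you describe. Keeping the full partition $A = \bigcup_w A_w$ and bounding the number of ``top-$m$'' elements across all blocks by $m|P|^C$ converts the $o(\log|F_n|)$ hypothesis into a density statement on $F_n$ that interfaces cleanly with ergodicity of the base.
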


\begin{proof}
    For a partition $P$, $\epsilon > 0$, and $m \in \N$, define
    \[
        \mathcal{R}_{P,\epsilon,m} \ = \ \left\{ y \in Y : \text{ there exists $k>m$ such that } \dist_y(P, T_\a^{-k}P) < 5\sqrt\epsilon \right\}.
    \]
    The first step is to show that $\nu(\mathcal{R}_{P,\epsilon,m}) \geq 1-4\sqrt\epsilon$ for every $P, \epsilon, m$.

    Let $(F_n)$ be the F\o lner sequence given by the hypothesis of \Cref{theorem: RelativeRigidtySufficientCondition}.
    By the assumption that $\h^{L,(F_n)}(\X \,|\, \pi) = 0$, for all $n$ sufficiently large we have
    \beq \label{equation: LogDominated}
        |P|^{\cov(\mu, P, F_n, \epsilon \,|\, \pi)} \ \leq \ \frac{\epsilon}{m} \cdot |F_n|.  
    \eeq
    Also, by the mean ergodic theorem, for all sufficiently large $n$ we have
    \beq \label{equation: ErgodicityInBase}
        \nu \left\{ y \in Y : \frac{\abs{ \{t \in F_n : S^t y \in \mathcal{R}_{P,\epsilon,m} \} }}{|F_n|} \ < \ \nu(\mathcal{R}_{P,\epsilon,m}) + \epsilon \right\} \ \geq \ 1-\epsilon.
    \eeq
    So fix an $n$ which is large enough so that \eqref{equation: LogDominated} and \eqref{equation: ErgodicityInBase} both hold.

    Let 
    \[
    C \ = \ C(n) = \cov(\mu, P, F_n, \epsilon \,|\, \pi)
    \] 
    and let $Y_n \subseteq Y$ be the set of $y$ satisfying $\cov(\mu_y, P, F_n, \epsilon) \leq C$.
    By definition we have $\nu(Y_n) \geq 1-\epsilon$, so by \eqref{equation: ErgodicityInBase}, we may fix a point $y$ that is an element of both $Y_n$ and the set appearing in \eqref{equation: ErgodicityInBase}.

    We now repeat the construction from the proof of \Cref{lemma: CloseTogetherPartition}, which we partially reproduce here for convenience.
    Let $B_1, \dots, B_L$ be subsets of $X$ such that each $B_i$ has $d_{P,F_n}$-diameter at most $\epsilon$ and $\mu_y\left( \bigcup B_i \right) \geq 1 - \epsilon$.
    Let $X' = \bigcup B_i$.
    Without loss of generality, we may assume that the $B_i$ are disjoint.
    For each $i$, fix a point $x_i \in B_i$.
    Then, for $x \in X'$, define $r(x)$ to be the unique $x_i$ such that $x \in B_i$.

    Define
    \[
    A \ = \ \Big\{ s \in F_n : \mu_y \{x \in X' : P(T_\a^s x) = P(T_\a^s r(x)) \} \geq 1-\sqrt{2\epsilon} \Big\}.
    \]
    In the proof of \Cref{lemma: CloseTogetherPartition}, we proved the estimate 
    \[
    |A| \ \geq \ |F_n|(1-\sqrt{2\epsilon}).
    \]
    Now decompose the set $A$ as
    \[
    A \ = \ \bigcup_{w \in  \{0,1,\dots,|P|-1\}^C} \{s \in A : (P(T_\a^s x_i))_{i=1}^L = w \} \ =: \ \bigcup_{w \in  \{0,1,\dots,|P|-1\}^C} A_w.
    \]
    In the proof of \Cref{lemma: CloseTogetherPartition}, we also showed that 
    \beq
    \label{eq: CloseTogetherTimes}
        \dist_y(T_\a^{-s}P, T_\a^{-t}P) \ \leq \ 5\sqrt{\epsilon} \qquad \text{whenever $s,t$ lie in the same $A_w$}.
    \eeq

    Using this decomposition of $A$ into the sets $A_w$, we can show that $S^t y \in \mathcal{R}_{P,\epsilon,m}$ for most $t \in F_n$.
    By \eqref{eq: CloseTogetherTimes}, we conclude that $\{t \in F_n : S^t y \in \mathcal{R}_{P,\epsilon,m} \}$ contains all of the elements of $A$, except for possibly the $m$ largest elements of each $A_w$.
    Therefore, we can use \eqref{equation: LogDominated} to estimate
    \begin{align*}
        \#\{t \in F_n : S^t y \in \mathcal{R}_{P,\epsilon, m} \} \ &\geq \ \sum_{w \in  \{0,1,\dots,|P|-1\}^C} \left( |A_w| - m \right) \ = \ |A| - m \cdot |P|^C \\
        &\geq \ |F_n|(1-\sqrt{2\epsilon}) - m \cdot \frac{\epsilon}{m} \cdot |F_n| \\
        &\geq \ |F_n| (1 - 3\sqrt\epsilon).
    \end{align*}
    Combining this estimate with \eqref{equation: ErgodicityInBase}, we conclude that
    \[
        \nu(\mathcal{R}_{P,\epsilon, m}) \ \geq \ 1-3\sqrt\epsilon - \epsilon \ \geq \ 1-4\sqrt\epsilon
    \]      
    as desired.

    Now let
    \[    
        \mathcal{R}_{P,\epsilon} \ = \ \bigcap_{M \geq 1} \bigcup_{m \geq M} \mathcal{R}_{P,\epsilon,m} \ = \ \left\{y \in Y: \dist_y(P, T_\a^{-n}P) < 5\sqrt\epsilon \text{ for infinitely many $n$} \right\}.
    \]  
    Because each $\nu(\mathcal{R}_{P,\epsilon,m}) \geq 1-4\sqrt{\epsilon}$, we have $\nu(\mathcal{R}_{P,\epsilon}) \geq 1-4\sqrt{\epsilon}$ as well.

    Finally, let $P_k$ be the partition $\{Y \times E : E \in  \D_k\}$, let $\epsilon_k = 1/k$, and let $\mathcal{R}_k = \mathcal{R}_{P_k, \epsilon_k}$.
    We have $\nu(\mathcal{R}_k) \geq 1-4/\sqrt{k}$.
    Now let $\bar{\mathcal{R}} = \bigcap_{K \geq 1} \bigcup_{k \geq K} \mathcal{R}_k$ and note that
    \[
        \nu\left( \bar{\mathcal{R}} \right) \ = \ \lim_{K \to \infty} \nu \left( \bigcup_{k \geq K} \mathcal{R}_k \right) \ \geq \ \lim_{K \to \infty} \nu(\mathcal{R}_K) \ = \ 1.
    \]  

    We claim that every $y \in \bar{\mathcal{R}}$ has a rigidity sequence.

    Fix $y \in \bar{\mathcal{R}}$.
    Then, by construction, there are infinitely many $k$ that satisfy 
    \[
        \dist_y(P_k, T_\a^{-n}P_k) < 5/\sqrt{k} \quad \text{for infinitely many $n$}.
    \]
    So, by repeating the diagonalization argument from \Cref{lemma: RigidityDiagonalization}, we again are able to conclude that $y$ has a rigidity sequence.
\end{proof}

\begin{corollary}
    Isometric extensions are rigid.
\end{corollary}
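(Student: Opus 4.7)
The strategy is to chain together \Cref{theorem: IsometricEquivBounded} and \Cref{theorem: RelativeRigidtySufficientCondition}. Given an isometric extension $\pi: \X \to \Y$ with $\X$ ergodic, \Cref{theorem: IsometricEquivBounded} immediately yields that $\h^{U, (F_n)}(\X \,|\, \pi) = 0$ for every rate function $U$ and every F\o lner sequence $(F_n)$. In particular, taking $U = L$ where $L(n) = \log n$ and any fixed F\o lner sequence $(F_n)$ for $\N$, we have $\h^{L, (F_n)}(\X \,|\, \pi) = 0$. Since $\Y$ is a factor of the ergodic system $\X$, it is also ergodic, so \Cref{theorem: RelativeRigidtySufficientCondition} applies and yields that $\pi$ is rigid.

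The one technical point is that \Cref{theorem: RelativeRigidtySufficientCondition} is stated for skew products $\X_\a \to \Y$ in the sense of \Cref{sec: Rigidity} (fiber $I$ with Lebesgue measure, cocycle valued in $\aut(I,m)$), whereas by \Cref{definition: IsometricExtension} an isometric extension is only isomorphic to a homogeneous skew product $\mathbf{Y \times_\a H/K}$ over a compact coset space. To bridge this, I would choose a measure-space isomorphism $(H/K, m_{H/K}) \cong (I, m)$ and transport the left-translation action $hK \mapsto \a(1,y) hK$ through this identification, producing a measurable cocycle $\wt{\a}: Y \to \aut(I,m)$ whose induced skew product $\X_{\wt{\a}}$ is isomorphic as an extension of $\Y$ to the original isometric extension. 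By the isomorphism invariance of relative slow entropy discussed after \Cref{theorem: MonotoneUnderMoreConditioning}, the vanishing of $\h^{L,(F_n)}$ transfers to $\X_{\wt\a}$, and \Cref{theorem: RelativeRigidtySufficientCondition} then produces rigidity of $\X_{\wt\a}$ as a skew-product extension.

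The main (mild) obstacle is exactly this representation issue: rigidity of extensions as formulated in \Cref{definition: RigidExtension} is not asserted to be isomorphism invariant (see the remark following that definition), so we cannot simply declare the abstract isometric extension rigid; we must exhibit a specific skew-product realization and verify rigidity for it. Since every isometric extension admits such a realization, this suffices to establish the corollary in the sense of the theory developed in \Cref{sec: Rigidity}.
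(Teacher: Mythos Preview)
Your argument is correct and matches the paper's proof exactly: apply \Cref{theorem: IsometricEquivBounded} to get vanishing relative slow entropy for every $U$ and $(F_n)$, specialize to $U=L$, and invoke \Cref{theorem: RelativeRigidtySufficientCondition}. You are in fact more explicit than the paper about the skew-product representation issue (the paper simply writes ``Suppose $\pi: \X_\a \to \Y$ is an isometric extension'' and proceeds), so your handling of that point is an elaboration rather than a deviation.
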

\begin{proof}
    Suppose $\pi: \X_\a \to \Y$ is an isometric extension.
    Then by \Cref{theorem: IsometricEquivBounded}, for any F\o lner sequence and any rate function $U$, $\pi$ has zero relative slow entropy.
    So in particular, there is a F\o lner sequence for which $\pi$ has zero relative slow entropy with respect to the rate function $L(n) = \log n$.
    By \Cref{theorem: RelativeRigidtySufficientCondition}, this implies $\pi$ is rigid.
\end{proof}

\begin{remark}
    In the non-relative setting, this result can be proven directly from the definitions using the fact that any orbit of a compact group rotation is dense in some closed subgroup.
    In the relative setting, it can proven in a similar but more complicated way by appealing to the theory of the Mackey group.
    It is interesting to note that we are able to provide another proof of this result using entropy methods. 
\end{remark}

In the non-relative setting, we are also able to prove a converse and obtain necessary and sufficient conditions for rigidity in terms of slow entropy.
For this part, we use interchangeably the notations $f(m) \ll g(m)$ and $f(m) = o(g(m))$ to mean that $f(m)/g(m) \to 0$ as $m \to \infty$.

\begin{theorem} \label{theorem: RigidtyCondition}
The following are equivalent.
\begin{enumerate}
    \item 
    $\X$
    is rigid.
    \item For every rate function $U$, there exists a F\o lner sequence $(F_n)$ for $\N$ such that 
    $\h^{U, (F_n)}(\X) = 0$.
    \item There exists a F\o lner sequence $(F_n)$ for $\N$ such that 
    $\h^{L, (F_n)}(\X) = 0$.
\end{enumerate} 
\end{theorem}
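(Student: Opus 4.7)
The plan is to establish the cycle of implications $(2) \Rightarrow (3) \Rightarrow (1) \Rightarrow (2)$. The implication $(2) \Rightarrow (3)$ is immediate upon specializing $U$ to $L(n) = \log n$. For $(3) \Rightarrow (1)$, I would apply Theorem \ref{theorem: RelativeRigidtySufficientCondition} with $\Y$ taken to be the trivial one-point system (which is ergodic). After identifying $\X$ with a standard model $(I, m, T)$, the extension $\X_\a \to \Y$ reduces to $\X$ itself, the relative rigidity condition $\a_{n_k}(y) \to \id$ becomes $T^{n_k} \to \id$ in $\aut(X, \mu)$, and the hypothesis of zero relative slow entropy over a trivial base becomes the hypothesis of (3). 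Both rigidity and slow entropy are isomorphism invariants (using Theorems \ref{theorem: MonotoneUnderExtensions} and \ref{theorem: MonotoneUnderMoreConditioning} in the trivial-base setting), so no information is lost.

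The substantive direction is $(1) \Rightarrow (2)$. Fix a rigidity sequence $(n_k)$ for $\X$, an arbitrary rate function $U$, and a refining generating sequence of partitions $(P_m)_{m \geq 1}$. I will build $(F_n)$ as a disjoint union of long intervals anchored at times where $T^{n_{k_j}}$ is very close to the identity. Concretely, for each $n$ set $L_n := n$, choose $K_n$ so large that $U(n K_n) \geq n \cdot |P_n|^{L_n}$ (possible because $U(m) \to \infty$), and then use rigidity to pick $k_1^n < k_2^n < \dots < k_{K_n}^n$ (taken far enough out in the rigidity sequence) with
\[
\dist_\mu\!\left(P_n, T^{-n_{k_j^n}} P_n\right) \ \leq \ \frac{1}{n K_n L_n}
\]
for all $j$, and with the intervals $I_j^n := [n_{k_j^n}, n_{k_j^n} + L_n)$ pairwise disjoint. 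Set $F_n := \bigcup_j I_j^n$; then $|F_n| = K_n L_n$ and $|(F_n + 1) \cap F_n| \geq K_n(L_n - 1)$, so $(F_n)$ is a F\o lner sequence for $\N$.

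The key covering estimate comes from a ``block repetition'' argument. Call $x \in X$ \emph{bad} if $P_n(T^\ell x) \neq P_n(T^{n_{k_j^n} + \ell} x)$ for some $j \in \{1,\dots,K_n\}$, $\ell \in \{0,\dots,L_n - 1\}$. Since $T$ is measure-preserving, for each fixed $(j,\ell)$ the measure of this event equals $\dist_\mu(P_n, T^{-n_{k_j^n}} P_n) \leq 1/(n K_n L_n)$, so a union bound yields $\mu(\text{bad}) \leq 1/n$. For any good $x$, the $j$-th block of its $(P_n, F_n)$-name equals the common ``template'' word $(P_n(T^\ell x))_{\ell=0}^{L_n - 1}$, so good $x$ with equal templates lie in a common $d_{P_n, F_n}$-diameter-zero set. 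There are at most $|P_n|^{L_n}$ templates, and for $n > 1/\epsilon$ the good set has measure $\geq 1 - \epsilon$, so
\[
\cov(\mu, P_n, F_n, \epsilon) \ \leq \ |P_n|^{L_n} \ \leq \ \frac{U(|F_n|)}{n}.
\]

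To finish, observe that for any fixed $m$, $P_n$ refines $P_m$ once $n \geq m$, so $\cov(\mu, P_m, F_n, \epsilon) \leq \cov(\mu, P_n, F_n, \epsilon) = o(U(|F_n|))$. Thus $\h^{U, (F_n)}(\X, P_m) = 0$ for every $m$, and Theorem \ref{theorem: GeneratingPartitionsDominate} then yields $\h^{U, (F_n)}(\X) = \sup_m \h^{U, (F_n)}(\X, P_m) = 0$, as required. The main obstacle is the simultaneous diagonalization: one must choose $L_n$, $K_n$, and the tolerance $\delta_n = 1/(n K_n L_n)$ so that the F\o lner condition, the interval disjointness, the smallness of the bad set, and the domination $|P_n|^{L_n} \ll U(|F_n|)$ all hold for every choice of rate function $U$ in a uniform way; the bound $U(n K_n) \geq n \cdot |P_n|^{L_n}$ is what makes this possible for any $U$.
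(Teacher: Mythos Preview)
Your proof is correct and follows essentially the same strategy as the paper: build each $F_n$ as a disjoint union of equal-length intervals anchored at rigidity times, and use the resulting block repetition to bound the covering number by the number of possible ``template'' words over one block. The only noteworthy difference is that the paper invokes Krieger's generator theorem (using that rigid systems have zero entropy) to work with a single finite generating partition $P$, whereas you work with a refining generating sequence $(P_m)$ and appeal to Theorem~\ref{theorem: GeneratingPartitionsDominate}; this is slightly more elementary since it avoids the implication ``rigid $\Rightarrow$ zero entropy,'' and your direct union bound on the bad set replaces the paper's summability/good-set argument, but these are cosmetic variations on the same idea.
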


\begin{proof}
The implication $(2) \Longrightarrow (3)$ is trivial and the implication $(3) \Longrightarrow (1)$ is the special case of \Cref{theorem: RelativeRigidtySufficientCondition} where $\Y$ is trivial, so we just need to show that $(1) \Longrightarrow (2)$.
Assume that 
$\X$
is rigid and let $(n_k)$ be a rigidity sequence.
Let $P$ be a finite generating partition for 
$\X$.
Such a partition must exist by Krieger's theorem \cite{krieger1970entropy} because rigid systems have zero entropy.
Applying the definition of rigidity to each of the finitely many cells of $P$, it follows that
\[
\lim_{k \to \infty} \dist_\mu(T^{-n_k}P, P) \ = \ 0.
\] 
Now replace the rigidity sequence $(n_k)$ with a sufficiently thin subsequence so that we may assume that 
\beq \label{eq: PartitionDistanceDecayRate}
\dist_\mu(T^{-n_k}P, P) \ < \ 2^{-k}.
\eeq
Also assume that the rigidity sequence is sparse enough so that $n_{k+1} - n_k > k$ for every $k$.

Now let $U$ be an arbitrary rate function and assume without loss of generality that $U(m) \ll \exp(m)$.
Let $V$ be another rate function satisfying $V(m) \ll \log U(m)$.
We define our F\o lner sequence $(F_m)$ by the formula 
\[
F_m \ := \ [0, V(m)) \cup [n_1, n_1 + V(m)) \cup \dots \cup [n_{m-1}, n_{m-1} + V(m)).
\]
It's clear that this is a F\o lner sequence for $\N$.

It will be useful later to have a good estimate for $|F_m|$.  Clearly $|F_m| \leq m\cdot V(m)$, but we can also show that it is not much smaller than this.  Observe that
\begin{align}
|F_m| \ &\geq \ V(m) \cdot \#\{k \leq m : [n_k, n_k + V(m)) \cap [n_{k+1}, n_{k+1} + V(m)) = \emptyset \} \nonumber \\
&\geq \ V(m) \cdot \#\{k \leq m: n_{k+1} - n_k > V(m) \} \nonumber \\
&\geq \ V(m) \cdot \#\{k \leq m: k > V(m) \} \nonumber \\
&= \ V(m) \cdot \max(m-V(m), 0) \nonumber \\
&= \ V(m) \cdot (m - o(m)) \label{eq: FolnerSetSize}
\end{align}

Our goal is to show that $\h^{U, (F_m)}(\X) = 0$.
Since $P$ is a generating partition, it suffices to show that $\h^{U, (F_m)}(\X, P) = 0$.
To do this, let $\epsilon > 0$.
We seek to estimate $\cov(\mu, P, F_m, \epsilon)$ for $m$ sufficiently large.
Let $C = \cov(\mu, P, [0,V(m)), \epsilon)$ and let $B_1, \dots, B_C$ be subsets of $X$ satisfying $\mu \left( \bigcup B_i \right) \geq 1-\epsilon$ and 
\[
\diam_{P, [0, V(m))}(B_i) \ \leq \ \epsilon.
\]
We now show that we can restrict the $B_i$ to a large subset of $X$ such that after the restsriction, $\diam_{P, F_m}(B_i)$ is also small.  

Let $k_0 = k_0(m)$ be the smallest integer that satisfies
\[
\sum_{k \geq k_0} \dist_\mu(T^{-n_k}P, P) \ < \ \frac{\epsilon}{V(m)}.
\]
Because of the condition that $\dist_{\mu}(T^{-n_k}P, P) < 2^{-k}$, it follows that 
\beq \label{eq: SizeOfK0}
k_0(m) \ \leq \ \log_2 \left( \frac{V(m)}{\epsilon} \right) \ \ll \ \log \log U(m) \ \ll \ m.
\eeq
For $0 \leq i < V(m)$, define the ``good sets'' 
\begin{align}
\mathcal{G}_i \ &:= \ \{x \in X : P(T^i x) = P(T^{n_k + i}x) \text{ for all $k \geq k_0$} \} \quad \text{and} \\
\mathcal{G} \ &:= \ \bigcap_{i=0}^{V(m)} \mathcal{G}_i. 
\end{align}
By the definition of $k_0$ and the $T$-invariance of $\mu$, we have
\beq \label{eq: RigidityUsingInvariance}
\mu(\mathcal{G}^c) \ \leq \ \sum_{i=0}^{V(m)-1} \mu(\mathcal{G}_i^c) \ \leq \ \sum_{i=0}^{V(m)-1} \sum_{k \geq k_0} \dist_\mu(T^{-(n_k+i)}P, T^{-i}P) \ \leq \ \epsilon.
\eeq
Now replace each $B_i$ by $B_i' = B_i \cap \mathcal{G}$, so we still have $\mu \left( \bigcup B_i' \right) \geq 1-2\epsilon$.
It remains to show that each $B_i'$ has small diameter according to $d_{P, F_m}$.

If $x,y \in B_i'$, then
\begin{align*}
    |F_m| \cdot d_{P, F_m}(x,y) \ &\leq \ \sum_{k=0}^{m-1} \sum_{i=0}^{V(m)-1} 1_{P(T^{n_k + i}x) \neq P(T^{n_k + i}y)} \\
    &\leq \ k_0 \cdot V(m) + \sum_{k = k_0}^{m-1} \sum_{i=0}^{V(m)-1} 1_{P(T^{n_k + i}x) \neq P(T^{n_k + i}y)} \\
    &= \ k_0 \cdot V(m) + (m - k_0) \cdot \sum_{i=0}^{V(m)-1} 1_{P(T^{i}x) \neq P(T^{i}y)} \\
    &\leq \ k_0 \cdot V(m) + m \cdot V(m) \cdot d_{P, [0, V(m))}(x,y).
\end{align*}
Therefore, by \eqref{eq: FolnerSetSize} and \eqref{eq: SizeOfK0}, we have
\begin{align*}
\diam_{P, F_m}(B_i') \ &\leq \ \frac{k_0 \cdot V(m)}{|F_m|} + \frac{m \cdot V(m) \cdot \diam_{P, [0, V(m))}(B_i')}{|F_m|} \\
&\leq \frac{o(m)}{m-o(m)} + \frac{\epsilon \cdot m}{m - o(m)} \\
&\leq 3 \epsilon
\end{align*}
for sufficiently large $m$.

Thus we have shown that 
\[
\cov(\mu, P, F_m, 3\epsilon) \ \leq \ \cov(\mu, P, [0, V(m)), \epsilon) \ \leq \ |P|^{V(m)} \ \ll \ U(m) \ \ll \ U(|F_m|)
\]
for any $\epsilon > 0$, and the desired conclusion follows.
\end{proof}

\begin{remark}
    The part of this proof that breaks down in the relative setting is the estimate \eqref{eq: RigidityUsingInvariance}.
    Here we have used the $T$-invariance of $\mu$ critically to deduce that if the partitions $P$ and $T^{-n_k}P$ are close with respect to $\mu$, then so are $T^{-i}P$ and $T^{-(n_k + i)}P$.
    In the relative setting this breaks down because if $P$ and $T_\a^{-n_k}P$ are close with respect to $\mu_y$, then $T_\a^{-i}P$ and $T_\a^{-(n_k + i)}P$ are only close with respect to $\mu_{S^i y}$.
\end{remark}

\begin{remark}
    In \cite[Theorem 1]{adams2021generic}, the author shows that for the F\o lner sequence $F_n = [0,n)$ and any sub-exponential rate function $U$, there is a dense $G_\delta$ set of systems $\mathbf{I} = ([0,1], T, m)$ that are both rigid and satisfy $\h^{U, (F_n)}(\mathbf{I}) = \infty$.
    Combined with \Cref{theorem: RigidtyCondition}, this shows that generically, the slow entropy of a system depends quite strongly on the choice of F\o lner sequence.
    This is in contrast with Kolmogorov--Sinai entropy, which is independent of the choice of F\o lner sequence.   
\end{remark}

As a corollary of \Cref{theorem: RigidtyCondition}, we get a similar condition that characterizes mild mixing systems in terms of slow entropy.
Recall that a system is said to be \textbf{mildly mixing} if it has no nontrivial rigid factors \cite{furstenberg1978mild}.

\begin{corollary} \label{corollary: MildMixingCondition}
The system 
$\X$
is mildly mixing if and only if for all partitions $P$ of $X$ and all F\o lner sequences $(F_n)$ for $\N$, we have $\h^{L, (F_n)}(\X, P) > 0$.
\end{corollary}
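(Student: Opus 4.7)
The plan is to prove both directions by contrapositive, each one reducing to Theorem \ref{theorem: RigidtyCondition} applied to an appropriate factor of $\X$. Throughout I interpret ``partition'' as a nontrivial partition (at least two cells of positive measure), since a one-cell partition has $\cov(\mu, P, F_n, \epsilon) \equiv 1$ and would trivially give zero slow entropy for any rate function, making the stated inequality vacuously false. The key observation underlying both directions is that if $\pi : \X \to \Y$ is a factor map and $Q$ is a partition of $Y$, then $P := \pi^{-1}Q$ satisfies $d_{P, F_n}(x, x') = d_{Q, F_n}(\pi x, \pi x')$, so that the names in the two systems are identified and
\[
\cov(\mu, P, F_n, \epsilon) \ = \ \cov(\nu, Q, F_n, \epsilon) \qquad \text{for every } \epsilon > 0, \ n \in \N.
\]
In particular $\h^{L, (F_n)}(\X, P) = \h^{L, (F_n)}(\Y, Q)$.

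For the $(\Leftarrow)$ direction I argue the contrapositive: suppose $\X$ is not mildly mixing, so there is a nontrivial rigid factor $\pi : \X \to \Y$. Apply the implication $(1) \Longrightarrow (3)$ of Theorem \ref{theorem: RigidtyCondition} to $\Y$ to produce a Følner sequence $(F_n)$ for $\N$ with $\h^{L, (F_n)}(\Y) = 0$. Since $\Y$ is nontrivial, choose a nontrivial partition $Q$ of $Y$ and let $P := \pi^{-1} Q$. Then $P$ is nontrivial, and by the name identification above,
\[
\h^{L, (F_n)}(\X, P) \ = \ \h^{L, (F_n)}(\Y, Q) \ \leq \ \h^{L, (F_n)}(\Y) \ = \ 0,
\]
contradicting the assumption that $\h^{L, (F_n)}(\X, P) > 0$ for all choices of $P$ and $(F_n)$.

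For the $(\Rightarrow)$ direction I again argue the contrapositive: suppose there exist a nontrivial partition $P$ of $X$ and a Følner sequence $(F_n)$ with $\h^{L, (F_n)}(\X, P) = 0$. Let $\mathbf{Z}$ be the factor of $\X$ corresponding to the $T$-invariant $\sigma$-algebra $\bigvee_{n \in \Z} T^{-n} P$; this is a nontrivial factor since $P$ is nontrivial, and $P$ is generating for $\mathbf{Z}$. Applying the non-relative case of Theorem \ref{theorem: GeneratingPartitionsDominate} (to the constant sequence $P_m = P$) together with the name identification gives
\[
\h^{L, (F_n)}(\mathbf{Z}) \ = \ \h^{L, (F_n)}(\mathbf{Z}, P) \ = \ \h^{L, (F_n)}(\X, P) \ = \ 0.
\]
By Theorem \ref{theorem: RigidtyCondition} $(3) \Longrightarrow (1)$, $\mathbf{Z}$ is rigid, so $\X$ admits a nontrivial rigid factor and is therefore not mildly mixing.

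The main obstacle is not a technical one but the conventional clarification that ``partition'' excludes the trivial partition; once that is fixed, both directions are short applications of the equivalence in Theorem \ref{theorem: RigidtyCondition} combined with the elementary observation that slow entropy of a pulled-back partition equals the slow entropy of the original.
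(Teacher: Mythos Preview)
Your proof is correct and follows essentially the same route as the paper's: both directions are proved by contrapositive, pulling back a partition from a rigid factor in one direction and passing to the factor generated by $P$ in the other, each time invoking \Cref{theorem: RigidtyCondition}. Your explicit remark that ``partition'' must exclude the trivial one is a genuine clarification of the statement (the paper leaves this implicit), and your spelled-out name-identification $\cov(\mu, \pi^{-1}Q, F_n, \epsilon) = \cov(\nu, Q, F_n, \epsilon)$ makes precise what the paper glosses as ``using the definition of factor map.''
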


\begin{proof}
    Suppose 
    $\X$
    is not mildly mixing.
    Then there is a nontrivial rigid system $\Y$ and a factor map
    $\pi: \X \to \Y$.
    Let $Q$ be any partition of $Y$ and let $P = \pi^{-1} Q$.
    The rigidity of $\Y$ implies that we can find a F\o lner sequence $(F_n)$ such that $\h^{L, (F_n)}(\Y, Q) = 0$.
    Then, using the definition of factor map, it immediately follows that $\h^{L, (F_n)}(\X, P) = 0$ as well.

    Conversely, suppose that there exist a partition $P$ and a F\o lner sequence $(F_n)$ so that 
    \[
    \h^{L, (F_n)}(\X, P) \ = \ 0.
    \]    
    Then consider the factor 
    $\Y$
    corresponding to the $T$-invariant $\sigma$-algebra $\bigvee_{n \in \Z} T^{-n}P$.
    Because $P$ is a generating partition for this factor, it follows that $\h^{L, (F_n)}(\Y) = 0$, which implies that 
    $\Y$
    is rigid, so 
    $\X$
    is not mildly mixing.
\end{proof}

\subsection{A necessary condition for rigidity}

In light of \Cref{theorem: RigidGeneric} and our failure to prove the converse of \Cref{theorem: RelativeRigidtySufficientCondition}, one may wonder whether or not \emph{every} extension is rigid.
In this section, we show that this is not the case by exhibiting a natural non-empty class of extensions that can not be rigid.

\begin{definition}
\label{definition: MixingCocycle}
    Given a system $\Y$, a cocycle $\a$ on $Y$ is said to be \textbf{strongly mixing} if for $\nu$-a.e. $y$, we have
    \[
        m \left( E \cap \a_n(y)^{-1} E \right) \ \to \ m(E)^2
    \]
    for all measurable $E \subseteq I$.
    Also, we will say that a skew product extension $\X_\a \to \Y$ is a \textbf{strongly mixing extension} if $\a$ is a strongly mixing cocycle.
\end{definition}

This definition is similar to the definition of strongly mixing extension given in \cite{schnurr2018rigid}, but here we have phrased it to be more analogous to our definition of rigidity.
It is unknown whether or not the definitions of rigidity and strong mixing presented here are equivalent to the definitions given in \cite{schnurr2018rigid}.

\begin{proposition}
    The set of rigid cocycles and the set of strongly mixing cocycles are disjoint.
\end{proposition}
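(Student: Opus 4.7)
The plan is to derive a contradiction from any cocycle $\alpha$ that is simultaneously rigid and strongly mixing. Since both defining properties hold on sets of full $\nu$-measure, I can pick a single base point $y \in Y$ at which $\alpha$ satisfies both the rigidity condition and the strong mixing condition of \Cref{definition: MixingCocycle}. Everything then reduces to a conflict about how $\alpha_n(y)^{-1}E$ can sit inside $I$ for a fixed measurable set $E$.

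Concretely, I would fix any measurable set $E \subseteq I$ with $m(E) = 1/2$ (for example $E = [0,1/2]$). By strong mixing at $y$, one has
\[
    m\bigl(E \cap \alpha_n(y)^{-1} E\bigr) \ \longrightarrow \ m(E)^2 \ = \ 1/4
\]
as $n \to \infty$. On the other hand, the rigidity of $\alpha$ at $y$ yields a sequence $(n_k)$ with $\alpha_{n_k}(y) \to \id$ in $\aut(I,m)$, which by the definition of the weak topology in \Cref{definition: AutomorphismTopology} (applied to the set $E$) gives $m\bigl(\alpha_{n_k}(y)^{-1} E \triangle E\bigr) \to 0$, and hence
\[
    m\bigl(E \cap \alpha_{n_k}(y)^{-1} E\bigr) \ \longrightarrow \ m(E) \ = \ 1/2.
\]
The strong-mixing limit must agree with the limit along the subsequence $(n_k)$, giving $1/4 = 1/2$, a contradiction.

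There is no real obstacle here; the only thing to be a little careful about is that the almost-sure sets in the two definitions have full $\nu$-measure, so their intersection is non-empty and we can genuinely choose a common $y$. The argument uses nothing beyond the two definitions and the existence of a Lebesgue set of measure $1/2$ in $I$, so the proof will be only a few lines.
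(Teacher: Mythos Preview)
Your proposal is correct and matches the paper's own proof essentially line for line: the paper also fixes a set $E \subseteq I$ of measure $1/2$, observes that strong mixing forces $m(E \cap \alpha_n(y)^{-1}E) \to 1/4$, while rigidity gives a subsequence along which this quantity tends to $1/2$, yielding the same contradiction. Your version is slightly more explicit about selecting a common $y$ in the intersection of the two full-measure sets and about deducing the $1/2$ limit from the weak-topology convergence, but the argument is identical in substance.
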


\begin{proof}
    Let $E \subseteq I$ be any subset of measure $1/2$.
    If $\a$ is a strongly mixing cocycle, then for a.e. $y$, 
    \[
        m \left( E \cap \a_n(y)^{-1}E \right) \ \to \ 1/4
    \]
    as $n \to \infty$.
    But if $\a$ were also a rigid cocycle, then there would have to be a subsequence $(n_k)$ along which 
    \[
        m \left( E \cap \a_{n_k}(y)^{-1}E \right) \ \to \ 1/2
    \] 
    as $k \to \infty$, a contradiction.
\end{proof}

Finally, let us remark that the set of strongly mixing cocycles is non-empty.
Indeed, if $\mathbf{I} = (I, m, \a_0)$ is any strongly mixing system and $\a$ is the constant cocycle $\a(y) = \a_0$, then $\a$ is clearly a strongly mixing cocycle.
These cocycles correspond to direct product transformations on $Y \times I$ where the transformation in the $I$ coordinate is strongly mixing.

\bibliography{../BibTex-refs/mybib.bib}{}
\bibliographystyle{alpha}

\end{document}